\providecommand{\bysame}{\leavevmode\hbox to3em{\hrulefill}\thinspace}
\providecommand{\MR}{\relax\ifhmode\unskip\space\fi MR }
\providecommand{\href}[2]{#2}
\numberwithin{equation}{section}
\theoremstyle{plain}
\newtheorem{thm}{Theorem}[section]
\newtheorem{prop}[thm]{Proposition}
\newtheorem{lem}[thm]{Lemma}
\newtheorem{cor}[thm]{Corollary} 
\newtheorem*{thm*}{Theorem}
\newtheorem*{thm1}{Main Theorem}
\newtheorem*{thm2}{Theorem 3.1}
\newtheorem*{thm3}{Theorem 5.1}
\theoremstyle{definition}
\newtheorem{defn}[thm]{Definition}
\theoremstyle{remark}
\newtheorem{rem}[thm]{Remark}
\newtheorem{ex}[thm]{Example}
\newcommand{\mbb}[1]{\mathbb{#1}}
\newcommand{\wt}[1]{\widetilde{#1}}
\newcommand{\wh}[1]{\widehat{#1}}
\newcommand{\ol}[1]{\overline{#1}}
\newcommand{\lie}[1]{{\mathfrak{#1}}}
\newcommand{\abs}[1]{\lvert #1\rvert}
\newcommand{\norm}[1]{\lVert #1\rVert}
\DeclareMathOperator{\Ad}{Ad}
\DeclareMathOperator{\eps}{\varepsilon}
\DeclareMathOperator{\g}{\mathfrak g}
\DeclareMathOperator{\G}{\Gamma}
\DeclareMathOperator{\La}{\Lambda}
\title{Pseudoconvex domains spread over complex homogeneous manifolds}
\author{Bruce Gilligan}
\address{Department of Mathematics and Statistics, University of Regina,
Regina, Canada S4S 0A2}  
\email{gilligan@math.uregina.ca}  
\author{Christian Miebach}
\address{Laboratoire de Math\'ematiques Pures et Appliqu\'ees, CNRS-FR~2956,
Universit\'e du Littoral, 50, rue F.~Buisson, 62228 Calais Cedex, France}
\email{miebach@lmpa.univ-littoral.fr}
\author{Karl Oeljeklaus}
\address{UFR-MIM et LATP (UMR-CNRS 6632), Universit\'e d'Aix-Marseille, 39, rue
F.~Joliot-Curie, 13453 Marseille  Cedex 13, France}
\email{karloelj@cmi.univ-mrs.fr}
\date{April 5, 2012} 
\subjclass[2010]{32M10 (primary); 32E05, 32E40 (secondary)}
\thanks{The authors would like to thank Peter Heinzner and the SFB/TR 12 for an
invitation to the Ruhr-Universit\"at Bochum where this project was started. Our
work was also partially supported by an NSERC Discovery Grant.}
\begin{document}

\begin{abstract}
Using the concept of inner integral curves defined by Hirschowitz we generalize
a recent result by Kim, Levenberg and Yamaguchi concerning the obstruction of
a pseudoconvex domain spread over a complex homogeneous manifold to be Stein.
This is then applied to study the holomorphic reduction of pseudoconvex complex
homogeneous manifolds $X=G/H$. Under the assumption that $G$ is solvable or
reductive we prove that $X$ is the total space of a $G$-equivariant holomorphic
fiber bundle over a Stein manifold such that all holomorphic functions on the
fiber are constant.
\end{abstract}

\maketitle   

\section{Introduction}

Let $G$ be a connected complex Lie group and $H$ a closed complex subgroup of
$G$. The complex homogeneous manifold $X=G/H$ admits a Lie theoretic holomorphic
reduction $\pi\colon G/H\to G/J$ where $G/J$ is holomorphically separable and
$\mathscr{O}(G/H)\simeq\pi^*\mathscr{O}(G/J)$. In general, the base $G/J$ is
not Stein nor do we have $\mathscr{O}(J/H)=\mbb{C}$. In some cases one can say
more. If $G$ is solvable, then $G/J$ is always a Stein manifold, although
$\mathscr{O}(J/H)=\mbb{C}$ is not true in general (see~\cite{HuOe86}). If $G$ is
complex reductive, then due to~\cite{BO} there is a factorization $G/H\to
G/\ol{H}\to G/J$ where $\ol{H}$ denotes the Zariski closure of $H$ in $G$;
moreover, $G/J$ is a quasi-affine variety. In general,
$\mathscr{O}(\ol{H}/H)\not=\mbb{C}$ and $\ol{H}/H$ can even be Stein.

As the main result of this paper we prove the following theorem about the
holomorphic reduction of \emph{pseudoconvex} complex homogeneous manifolds
$X=G/H$ where $G$ is solvable or reductive.

\begin{thm1}
Suppose that the complex homogeneous manifold $X=G/H$ is pseudoconvex and let
$X=G/H\to G/J$ be its holomorphic reduction.
\begin{enumerate}[(1)]
\item If $G$ is a complex reductive Lie group, then the base $G/J$ is Stein and 
the fiber $J/H$  satisfies $\mathscr{O}(J/H)=\mbb{C}$.   
If $X$ is K\"{a}hler as well, then $J/H$ is a product of the Cousin group $\ol{H}/H$ with the 
 homogeneous rational manifold $J/\ol{H}$.     
\item If $G$ is solvable, then the fiber $J/H$ is a Cousin group tower and thus
$\mathscr{O}(J/H)=\mbb{C}$.
\end{enumerate}
\end{thm1}

An open question is whether the holomorphic function algebra 
$\mathscr{O}(G/H)$ is a Stein algebra for $G/H$ pseudoconvex and $G$ general.     
This lies beyond the scope of the present paper and is not addressed here.   

As a first step towards the proof of this theorem we discuss the Levi problem
for pseudoconvex domains spread over complex homogeneous spaces and present a
Lie theoretic description of the obstruction to their being Stein. A
characterization of relatively compact, smoothly bounded, pseudoconvex domains 
$D$ in complex homogeneous manifolds such that $D$ is not Stein is given
in~\cite{KLY}. The incorporation of  methods of Hirschowitz~\cite{Hir75} allows
us to simplify their proof and to show that the assumptions on the smoothness of
the boundary and relative compactness of $D$ are not needed. One of the
essential tools that Hirschowitz uses is the concept of an {\it inner integral
curve}, i.e., a non-constant holomorphic image of $\mathbb C$ in $D$ that is
relatively compact and is the integral curve of a vector field. Indeed,
Hirschowitz proves that if a non-compact pseudoconvex domain $D$ is spread over
an infinitesimally homogeneous complex manifold and $D$ has no inner integral
curves, then $D$ is Stein.

Our generalization of the main result of~\cite{KLY} reads then as follows.

\begin{thm2}
Let $p\colon D\to X$ be a pseudoconvex domain spread over the complex
homogeneous manifold $X=G/H$ such that $p(D)$ contains the base point $eH\in X$.
If $D$ is not Stein, then there exist a connected complex Lie subgroup $\wh{H}$
of $G$ with $H^0\subset\wh{H}$ and $\dim H<\dim\wh{H}$ and a foliation
$\mathscr{F}=\{F_x\}_{x\in D}$ of $D$ such that
\begin{enumerate}[(1)]   
\item every leaf of $\mathscr{F}$ is a relatively compact immersed complex
submanifold of $D$,
\item every inner integral curve in $D$ passing through $x\in D$ lies in the
leaf $F_x$ containing $x$, and
\item the leaves of $\mathscr{F}$ are homogeneous under a covering group of
$\wh{H}$.
\end{enumerate}
\end{thm2}

In Proposition~\ref{Prop:normalizing} we show furthermore the existence of an
open subgroup $H^*$ of $H$ such that $D$ can be realized as a domain spread over
$G/H^*$ and that $H^*$ normalizes $\wh{H}$. This will be important in the proof
of our Main Theorem.

Moreover, we have the following strengthening of this theorem in the projective
setting.

\begin{thm3}
Suppose $G$ is a connected complex Lie group acting holomorphically on 
$\mbb{P}_n(\mbb{C})$ and $X=G/H$ is an orbit. Then, every pseudoconvex domain
spread over $X$ is holomorphically convex and the fibers of its Remmert
reduction are rational homogeneous manifolds.
\end{thm3}

Let us briefly outline the organization of this paper. In Section~2 we summarize
Hirschowitz' results in a form suitable for our needs. In Section~3 we prove
Theorem~\ref{Thm:Main} and discuss several applications of it. In the fourth
section we investigate when the foliation of a pseudoconvex domain has compact
leaves, which leads in Section~5 to the proof of Theorem~\ref{projcase}. The
sixth section contains a generalization of Kiselman's minimum principle that is
used in the last two sections in order to prove our Main Theorem in the reductive
and solvable case, respectively.

Throughout this paper we will denote Lie groups by upper case letters and their
Lie algebras by the corresponding fracture letters.
 
\section{A result of Hirschowitz}

Following Hirschowitz~\cite{Hir74} and~\cite{Hir75} we call a complex manifold
$X$ {\emph{infinitesimally homogeneous}} if every tangent space of $X$ is
generated by global holomorphic vector fields. Every complex manifold
homogeneous under the action of a Lie group of holomorphic transformations
is infinitesimally homogeneous, and more generally every domain spread over such
a manifold has this property. Here we understand by a \emph{domain spread over
$X$} a pair $(D,p)$ where $D$ is a connected complex manifold and $p\colon D\to
X$ is locally biholomorphic.   

Throughout the rest of the paper we fix a complex homogeneous space $X=G/H$
where $G$ is a connected complex Lie group and $H$ is a (not necessarily
connected) closed complex subgroup of $G$. We view an element $\xi\in\lie{g}$ as
a right invariant vector field on $G$ which we push down to a holomorphic vector
field $\xi_X$ on $X=G/H$. If $p\colon D\to X$ is a domain spread over $X$, we
denote by $\wt{\xi}_X$ the lift of $\xi_X$ to $D$. An \emph{inner integral curve
in $D$} is a non-constant holomorphic map $\mbb{C}\to D$ with relatively compact
image in $D$ which is the integral curve of some vector field $\wt{\xi}_X$ with
$\xi\in\lie{g}$.

In this paper a complex manifold is called {\emph{pseudoconvex}} if it admits
a continuous plurisubharmonic exhaustion function. Note that every
holomorphically convex complex manifold is pseudoconvex. We will use the
following special case of~\cite[Theorem~4.1]{Hir75}.

\begin{thm}\label{Thm:Hirsch}  
Let $p\colon D\to X$ be a non-compact pseudoconvex domain spread over a complex
homogeneous manifold $X=G/H$.
\begin{enumerate}[(1)]
\item If $D$ is not Stein, then $D$ contains an inner integral curve.
\item If $X$ is a compact rational variety, then $D$ is holomorphically convex.
\item If $X$ is an irreducible compact rational variety, then $D$ is Stein.
\end{enumerate}
\end{thm}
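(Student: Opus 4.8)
The plan is to reduce all three assertions to Hirschowitz' theorem~\cite[Theorem~4.1]{Hir75}, so the first point to settle is that we are in its setting. Since $X=G/H$ is homogeneous, the fundamental vector fields $\xi_X$, $\xi\in\mathfrak g$, already span every tangent space of $X$; hence $X$, and with it any domain $p\colon D\to X$ spread over it, is infinitesimally homogeneous, and the inner integral curves of $D$ in the sense of~\cite{Hir75} are exactly the ones fixed above. Granting this, part~(1) is the contrapositive of~\cite[Theorem~4.1]{Hir75}: a non-compact pseudoconvex domain spread over an infinitesimally homogeneous manifold that contains no inner integral curve is Stein.

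For parts~(2) and~(3) I would unwind what~\cite{Hir75} gives once $X$ is rational. If $D$ is Stein there is nothing to prove, so assume it is not; by~(1) it carries an inner integral curve $\varphi\colon\mathbb C\to D$, an integral curve of some $\wt{\xi}_X$ with relatively compact image. Its projection $\psi:=p\circ\varphi$ is a non-constant integral curve of $\xi_X$ on the compact manifold $X$, and since $p$ is locally biholomorphic the set $A:=p\bigl(\overline{\varphi(\mathbb C)}\bigr)=\overline{\psi(\mathbb C)}$ is a compact subset of $X$ in which the one-dimensional orbit $\exp(\mathbb C\xi)\cdot\psi(0)$ is dense and open. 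This is where the hypothesis that $X$ is a \emph{rational} variety enters --- and where the real work of~\cite{Hir75} lies, exploiting the pseudoconvexity of $D$: it forces $A$ to be a rational curve, the restriction of $p$ to $\overline{\varphi(\mathbb C)}$ being its normalisation, so that $\overline{\varphi(\mathbb C)}\cong\mathbb P_1(\mathbb C)$; globally, the inner integral curves of $D$ are then swept out by an analytic foliation $\mathscr F$ whose leaves are compact rational manifolds of a fixed dimension, with every inner integral curve of $D$ lying in a leaf.

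Once $\mathscr F$ is available the rest is routine. Using that its leaves are compact and that $D$ is pseudoconvex, one obtains a proper holomorphic surjection $q\colon D\to D'$ onto a complex space whose fibres are exactly the leaves; $D'$ is again pseudoconvex --- the fibrewise supremum of a continuous plurisubharmonic exhaustion of $D$ is one for $D'$ --- and is spread over the corresponding quotient of $X$, hence again infinitesimally homogeneous. It contains no inner integral curve, since such a curve would lie in a single leaf and therefore be collapsed by $q$; so by~(1) $D'$ is Stein. Thus $q$ is the Remmert reduction of $D$, $D$ is holomorphically convex, and the fibres of its Remmert reduction are the compact rational leaves, which is~(2). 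If moreover $X$ is irreducible, the analysis of~\cite{Hir75} leaves no room for positive-dimensional leaves, $\mathscr F$ is the foliation by points, and $D=D'$ is Stein, which is~(3). The genuine obstacle throughout is the step internal to~\cite{Hir75}: showing, in the presence of inner integral curves, that they organize into a foliation with compact leaves over a Stein base --- in effect, solving the Levi problem on the leaf space --- which is exactly why we take the statement over from~\cite{Hir75} rather than reprove it here.
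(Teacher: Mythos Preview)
The paper does not prove this theorem at all; it simply records it as a special case of~\cite[Theorem~4.1]{Hir75} and uses it as a black box throughout. Your approach is the same in substance: you correctly observe that $X=G/H$ and any domain spread over it are infinitesimally homogeneous, so that part~(1) is the contrapositive of Hirschowitz' result, and you explicitly defer the decisive step for~(2) and~(3) --- organizing the inner integral curves into a foliation with compact rational leaves over a Stein base --- to~\cite{Hir75}. So there is no real divergence to compare.

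Two small slips in your gloss are worth flagging, even if they do not affect the conclusion since you are quoting~\cite{Hir75} anyway. First, a continuous plurisubharmonic exhaustion of $D$ is \emph{constant} on each compact leaf (a bounded plurisubharmonic function on a connected compact complex manifold is constant), so it descends directly to $D'$; the fibrewise supremum would not in general be plurisubharmonic. Second, your sentence ``such a curve would lie in a single leaf and therefore be collapsed by $q$'' is backwards: an inner integral curve in $D'$ lives downstairs, not in $D$. The intended argument is that such a curve lifts (locally) to an inner integral curve in $D$ transverse to $\mathscr{F}$, contradicting the maximality of the foliation built in~\cite{Hir75}.
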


\begin{rem}
Let $X$ be infinitesimally homogeneous. If $X$ is compact rational, then
$X=G/P$ where $G$ is complex semisimple and $P$ is a parabolic subgroup of $G$.
Such $X$ is irreducible if and only if $G$ is simple and $P$ is maximal
parabolic.
\end{rem}

The following example shows that Hirschowitz' theorem does not hold for locally
Stein domains in complex homogeneous manifolds.

\begin{ex}
Let $D$ be the punctured unit ball in $\mbb{C}^2\setminus\{0\}\simeq G/H$ where
$G={\rm{SL}}(2,\mbb{C})$ and $H=\left(\begin{smallmatrix}1&\mbb{C}\\0&1
\end{smallmatrix}\right)$. Since $\partial D$ is strictly Levi-convex, $D$ is
locally Stein in $G/H$, but not pseudoconvex (in which case $D$ would be Stein).
Note that $D$ does not contain an inner integral curve.
\end{ex}

The following result (see~\cite[Theorem~2.1]{Hir74}) implies that such an
example does not exist when the domain $p(D)$ is relatively compact in $X$ and
$p$ has finite fibers.

\begin{thm}
Let $p\colon D\to X$ be a locally Stein domain spread over $X=G/H$. If $p(D)$ is
relatively compact and $p$ has finite fibers, then $D$ is pseudoconvex.
\end{thm}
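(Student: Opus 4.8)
I would approach this as a homogeneous-space version of Oka's solution of the Levi problem for Riemann domains over $\mbb{C}^n$ (a locally pseudoconvex domain has plurisubharmonic boundary distance, hence is pseudoconvex), with the flows of the vector fields $\xi_X$, $\xi\in\lie{g}$, playing the role of the affine translations of $\mbb{C}^n$. First I would reduce to the case that $D$ is non-compact, since a compact $D$ is trivially pseudoconvex. Put $K:=\ol{p(D)}$, a compact subset of $X$. Since $X$ is infinitesimally homogeneous, the vectors $\xi_X(x)$, $\xi\in\lie{g}$, span $T_xX$ for each $x$, so by compactness of $K$ I can fix finitely many $\xi_1,\dots,\xi_N\in\lie{g}$ and a radius $r_0>0$ such that, for all $x$ in a neighbourhood of $K$, the multi-flow map
\[
 \Psi_x(t)=\bigl(\exp(t_1\xi_1)\cdots\exp(t_N\xi_N)\bigr)\cdot x,\qquad t\in\mbb{C}^N,
\]
is defined and submersive on $\{\norm{t}<r_0\}$ and admits there a holomorphic right inverse $\phi_x$ with $\phi_x(x)=0$, defined on a ball $B(x,\rho)$ of radius $\rho>0$ independent of $x$ (with respect to a fixed metric near $K$). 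These $\Psi_x$ serve as ``flow charts'' centred at $x$, replacing the polydisc charts of $\mbb{C}^n$. For $z\in D$ I then set
\[
 \delta(z)=\sup\bigl\{\,r\in(0,r_0] : \Psi_{p(z)}|_{\{\norm{t}<r\}}\text{ lifts through }p\text{ to }D\text{ with value }z\text{ at }t=0\,\bigr\}
\]
and $u:=-\log\delta$, a function on $D$ valued in $[-\log r_0,+\infty)$ (the cap at $r_0$ keeps it bounded below).

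The crux is to show that $u$ is continuous and plurisubharmonic, and this step I expect to be the main obstacle. Continuity, with a local Lipschitz estimate, should follow as for the Euclidean boundary distance, using that $\Psi_x$ varies holomorphically with $x$. Plurisubharmonicity is proved by the continuity principle of Hartogs, exactly as in Oka's lemma: if $u$ failed the sub-mean-value inequality along some analytic disc in $D$, I would build from the flow charts $\Psi_x$ an associated Hartogs figure whose lift to $D$ is obstructed at a point $x_0\in K$ lying over the ideal boundary of $D$, while the lift of its distinguished boundary part is not; this contradicts local Steinness of $(D,p)$, which furnishes near $x_0$ a neighbourhood of $X$ with Stein --- hence pseudoconvex --- preimage components, over which no such obstruction can occur. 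The only change relative to $X=\mbb{C}^n$ is that the analytic polydiscs occurring in Oka's argument are replaced by their images under the maps $\Psi_x$; since these depend holomorphically on their centres, the holomorphic-extension input of the classical argument carries over essentially verbatim.

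Finally I would check that $u$ exhausts $D$, which is where the relative compactness of $p(D)$ and the finiteness of the fibres of $p$ enter. Let $(z_j)$ leave every compact subset of $D$; since $p(D)$ is relatively compact, I may pass to a subsequence with $p(z_j)\to x_0\in K$. If $u(z_j)$ remained bounded, i.e.\ $\delta(z_j)\ge\varepsilon>0$, then a lift of $\Psi_{p(z_j)}$ over $\{\norm{t}<\varepsilon\}$ taking the value $z_j$ at the origin, composed with $\phi_{p(z_j)}$, yields a holomorphic section $\sigma_j$ of $p$ over a neighbourhood of $p(z_j)$ which, for $j$ large and by the uniformity built into the choice of $\xi_1,\dots,\xi_N$, contains a fixed ball $B(x_0,\rho')$. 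A holomorphic section of $p$ over the connected set $B(x_0,\rho')$ is determined by its value at $x_0$, which lies in the finite fibre $p^{-1}(x_0)$; hence only finitely many such sections occur, and after a further subsequence $\sigma_j\equiv\sigma$. But then $z_j=\sigma(p(z_j))\to\sigma(x_0)\in D$, contradicting that $(z_j)$ leaves every compact set. Therefore $\delta(z_j)\to0$, so $u(z_j)\to+\infty$, and $u$ is a continuous plurisubharmonic exhaustion; hence $D$ is pseudoconvex.
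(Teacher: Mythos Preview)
The paper does not prove this theorem; it is quoted from \cite{Hir74} (Hirschowitz, Theorem~2.1) without argument. Your sketch reconstructs essentially the same mechanism as Hirschowitz: use the global flows of the vector fields $\xi_X$ as a substitute for the affine translations of $\mbb{C}^n$, define a flow-based boundary-distance $\delta$, prove $-\log\delta$ is plurisubharmonic via the continuity principle and the locally Stein hypothesis, and deduce exhaustion from relative compactness of $p(D)$ together with finiteness of the fibres. So in approach you are aligned with the source the paper cites.

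Two places in your write-up would need tightening before it counts as a proof. First, in the plurisubharmonicity step you say the Oka--Hartogs argument ``carries over essentially verbatim,'' but your charts $\Psi_x$ depend on the base point $x$, so the relevant Hartogs figure is the image of a map $(s,t)\mapsto\Psi_{p(z(s))}(t)$ which is holomorphic in \emph{both} variables; you should make explicit that it is this joint holomorphy that lets the continuity principle apply, rather than appealing to a fixed polydisc chart. Hirschowitz in fact works with the one-parameter flow distances $d_\xi$ for finitely many $\xi$ spanning along $K$ and takes their minimum, which sidesteps this dependence and makes the Oka step literally the classical one; you might find that route cleaner. Second, in the exhaustion step you need the sections $\sigma_j$ to be defined on a \emph{fixed} ball $B(x_0,\rho')$; this requires choosing $\rho'=\rho'(\varepsilon)$ small enough that $\phi_x\bigl(B(x,\rho')\bigr)\subset\{\norm{t}<\varepsilon\}$ uniformly for $x$ near $x_0$, which follows from $\phi_x(x)=0$ and compactness but should be said. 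With these points spelled out, your argument is correct.
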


\begin{rem}\label{Serre}   
As a direct application of Theorem~\ref{Thm:Hirsch} we obtain the following:
\emph{If each holomorphic map $\mbb{C}\to X$ with relatively compact image is
constant, then every pseudoconvex domain spread over $X$ is Stein.} This
observation applies e.g.~when $X$ is Brody-hyperbolic or if there exists a
holomorphic map $f\colon X\to Y$ such that $Y$ and all fibers of $f$ are
holomorphically separable. 
\end{rem}     

The following example comes from a construction in~\cite{CL} and illustrates the
second case in Remark~\ref{Serre}.

\begin{ex}\label{Ex:CoeureLoeb}
Let $A=\left(\begin{smallmatrix}2&1\\1&1\end{smallmatrix}\right)$ and take $D:=
\log(A)$ the unique real logarithm of $A$. For $K=\mbb{Z},\mbb{R},\mbb{C}$ we
define $G_K$ to be the semi-direct product $K\ltimes K^2$ with group law
\begin{equation*}
(x_1,y_1)\cdot(x_2,y_2):=\bigl(x_1+x_2,\exp(x_1D)y_2+y_1\bigr).
\end{equation*}
Then $G_\mbb{R}/G_\mbb{Z}$ is a compact totally real submanifold of $G_\mbb{C}/
G_\mbb{Z}$ and the map $p\colon G_\mbb{C}/G_\mbb{Z}\to G_\mbb{C}/(G_\mbb{C}'
G_\mbb{Z})\simeq\mbb{C}^*$ is a holomorphic fiber bundle with fiber $\mbb{C}^*
\times\mbb{C}^*$. It is known that $\mathscr{O}(G_\mbb{C}/G_\mbb{Z})=
p^*\mathscr{O}(\mbb{C}^*)$, in particular $G_\mbb{C}/G_\mbb{Z}$ is not Stein.
However, since base and fiber are Stein, $G_\mbb{C}/G_\mbb{Z}$ cannot contain an
inner integral curve. Therefore every pseudoconvex domain spread over
$G_\mbb{C}/G_\mbb{Z}$ is Stein by Theorem~\ref{Thm:Hirsch}.

We would like to point out that $G_\mbb{C}/G_\mbb{Z}$ contains pseudoconvex
domains of the form $D=p^{-1}(U)$ for a domain $U\subset\mbb{C}^*$ which are
non-trivial in the sense that the restricted fiber bundle $p|_D$ is non-trivial.
In fact, Dan Zaffran proved in~\cite{Zaf} that for an annulus $U \subset
\mbb{C}^*$ of modulus smaller than a certain constant, the inverse image
$p^{-1}(U) \subset G_\mbb{C}/G_\mbb{Z}$ is Stein, so in particular it is
pseudoconvex. Of course every domain of the form $p^{-1}(U)$ is locally Stein in
$G_\mbb{C}/G_\mbb{Z}$, but in general not pseudoconvex.
\end{ex}

\section{Existence of the foliation}   

In this section we consider a non-Stein pseudoconvex domain $p\colon D\to X$
spread over the complex homogeneous manifold $X=G/H$.
Generalizing~\cite[Theorem~6.4]{KLY} we will show that there exists a connected
complex Lie subgroup $\wh{H}$ of $G$ that induces a holomorphic foliation of
$D$ having relatively compact leaves such that    
every leaf of this foliation contains all of the inner integral curves of $D$ 
passing through any of its points. Note that it is not assumed that $p(D)$ is
relatively compact or has smooth boundary. What happens when these leaves are
closed, and thus compact, is addressed in Section~\ref{Section:holconv}.

\begin{thm}\label{Thm:Main}   
Let $p\colon D\to X$ be a pseudoconvex domain spread over $X=G/H$ such that
$p(D)$ contains the base point $eH\in X$. If $D$ is not Stein, then there exist
a (not necessarily closed) connected complex Lie subgroup $\wh{H}$ of $G$ with
$H^0\subset\wh{H}$ and $\dim H<\dim\wh{H}$ as well as a foliation
$\mathscr{F}=\{F_x\}_{x\in D}$ of $D$ such that
\begin{enumerate}[(1)]   
\item every leaf of $\mathscr{F}$ is a relatively compact immersed complex
submanifold of $D$,
\item every inner integral curve in $D$ passing through $x\in D$ lies in the
leaf $F_x$ containing $x$, and
\item the leaves of $\mathscr{F}$ are homogeneous under a covering group of
$\wh{H}$ and the restriction of $p$ to a leaf in $D$ is a finite covering map
onto its image in $X$.
\end{enumerate}
\end{thm}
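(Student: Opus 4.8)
The plan is to build $\wh H$ from the collection of holomorphic vector fields $\xi_X$, $\xi\in\lie g$, whose lifts $\wt\xi_X$ possess relatively compact (equivalently, bounded on $D$ by a plurisubharmonic exhaustion function) integral curves through points of $D$. Concretely, fix a continuous plurisubharmonic exhaustion $\varphi\colon D\to\mbb R$ and call $\xi\in\lie g$ \emph{tame at $x$} if the maximal integral curve of $\wt\xi_X$ through $x$ is relatively compact in $D$; by Hirschowitz' theorem (Theorem~\ref{Thm:Hirsch}(1)) the set of tame directions is nonempty at some point, since $D$ not Stein forces an inner integral curve. The first step is to show that the integral curve of such a $\wt\xi_X$ is in fact a copy of $\mbb C$ (the flow is globally defined because the orbit is relatively compact, and it cannot be $\mbb C^*$ or an elliptic curve since a nonconstant holomorphic map from such a space into $D$ composed with $\varphi$ would violate the maximum principle unless constant — ruling out compact orbits — while periodicity is excluded because $p$ maps the curve locally biholomorphically onto a non-closed orbit of a one-parameter subgroup acting on $X$; here I would borrow the relevant normal-form analysis from \cite{Hir75} and \cite{KLY}). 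The key structural point is then: along such a curve, $\varphi$ restricted to the $\ol{\exp(\mbb C\xi)\cdot x}$ in $D$ attains a maximum, and at a maximum point the curve, together with all other tame directions, must close up into a common relatively compact orbit.

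The second and central step is to pass from individual tame vector fields to a Lie subalgebra $\hh\subset\lie g$. I would argue that at a suitably chosen point $x_0\in D$ — a point maximizing $\varphi$ on a minimal nonempty relatively compact orbit closure — the set of tame directions is a linear subspace of $\lie g$, and moreover is a Lie subalgebra: if $\wt\xi_X$ and $\wt\eta_X$ have relatively compact orbits through a neighborhood of $x_0$ in the orbit, then so does their bracket, because the orbit of the group they generate stays inside the compact set cut out by a level of $\varphi$ (this is where I expect the real work: one must show the local group generated by the tame directions has relatively compact orbit in $D$, not merely that each one-parameter subgroup does — the standard device is to use that $\varphi$ is plurisubharmonic and bounded on the orbit, hence by the maximum principle constant on the orbit of the isotropy-enlarged group, forcing the orbit into a compact level set of $\varphi$). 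Call the resulting subalgebra $\hh$ and let $\wh H$ be the corresponding connected (immersed) complex Lie subgroup of $G$. Since tameness is invariant under the $H^0$-action (right translation by $H^0$ permutes the fibers of $p$ and commutes with the right-invariant vector fields on $G$), we get $\h\subset\hh$; and $\dim\hh>\dim\h$ precisely because there exists at least one tame direction not tangent to the fibers of $G\to G/H$, i.e.\ not in $\h$ — otherwise the inner integral curve would be constant.

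The third step is to produce the foliation and verify (1)--(3). Define $F_x$ to be the orbit of $x$ under the local action on $D$ obtained by lifting the vector fields $\wt\xi_X$, $\xi\in\hh$; equivalently, $F_x = \wt{\wh H}\cdot x$ where $\wt{\wh H}$ is the appropriate covering group acting on $D$ (the $\hh$-action on $X$ integrates to an action of $\wh H$ with orbit $\wh H\cdot p(x)$, and this lifts, after passing to a covering $\wt{\wh H}\to\wh H$ and using that $p$ is locally biholomorphic, to a local action on $D$ whose orbits are immersed submanifolds). Relative compactness of each leaf (1) is exactly the tameness/maximum-principle argument above applied uniformly; property (2) holds because any inner integral curve through $x$ is by definition the orbit of some $\wt\xi_X$ with $\xi$ tame at $x$, hence $\xi\in\hh$, hence the curve lies in $F_x$; property (3): the leaf $F_x$ is a quotient of $\wt{\wh H}$ by a discrete subgroup (the stabilizer of $x$ in $\wt{\wh H}$, which is discrete because $\dim F_x = \dim\wh H - \dim(\wh H\cap H)$ matches the orbit dimension downstairs and $p|_{F_x}$ is locally biholomorphic onto $\wh H\cdot p(x)$), so $F_x$ is homogeneous under $\wt{\wh H}$ and $p|_{F_x}\colon F_x\to \wh H\cdot p(x)\subset X$ is a covering map; it has finite fibers because the fibers of $p$ itself are discrete and a leaf meets each fiber in the orbit of a discrete isotropy, which compactness of $\ol{F_x}$ forces to be finite. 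The main obstacle, to reiterate, is the closure property of the set of tame directions under bracket — showing that the Lie algebra generated by tame vector fields still has relatively compact orbits in $D$ — and the cleanest route is to fix a minimal relatively compact orbit closure, take a maximum point $x_0$ of $\varphi$ there, and run a maximum-principle argument to show $\varphi$ is constant on the $\hh$-orbit of $x_0$, pinning that orbit inside a compact level set.
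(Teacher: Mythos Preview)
Your overall architecture is right, but the central step---closure of the tame directions under linear combinations and brackets---has a real gap, and your proposed device (pick a maximum point $x_0$ of a single exhaustion $\varphi$ on a ``minimal relatively compact orbit closure'') does not obviously do the job. Tameness of $\xi$ and $\eta$ at $x_0$ says nothing a priori about tameness of $\xi+\eta$ or $[\xi,\eta]$ there: the flow of $\xi+\eta$ is not built from the flows of $\xi$ and $\eta$ in any way that respects relative compactness, and a maximum of one plurisubharmonic function need not be a maximum along a different direction. The paper resolves this by changing the definition: set
\[
\wh{\lie h}=\bigl\{\xi\in\lie g:\ \wt\xi_X\psi(x_0)=0\ \text{for \emph{every} continuous plurisubharmonic }\psi\text{ on }D\bigr\},
\]
and first proves (via a Hirschowitz-type extension trick: transport $\psi$ back along the flow and patch it with $\chi\circ\rho$ to a global plurisubharmonic function) that this condition forces every such $\psi$ to be \emph{constant} on the whole integral curve $F^\xi_{x_0}$. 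Once you know that, an induction shows $\rho\bigl(e^{t_1\xi_1}\cdots e^{t_k\xi_k}\cdot x_0\bigr)\equiv\rho(x_0)$ for all $\xi_1,\dotsc,\xi_k\in\wh{\lie h}$, and then Campbell--Baker--Hausdorff gives linearity and bracket closure directly: from $e^{t\lambda_1\xi_1}e^{t\lambda_2\xi_2}=e^{t(\lambda_1\xi_1+\lambda_2\xi_2)+O(t^2)}$ one reads off $\lambda_1\xi_1+\lambda_2\xi_2\in\wh{\lie h}$, and from the commutator expansion $e^{\sqrt t\xi_1}e^{\sqrt t\xi_2}e^{-\sqrt t\xi_1}e^{-\sqrt t\xi_2}=e^{t[\xi_1,\xi_2]+O(t^{3/2})}$ one reads off $[\xi_1,\xi_2]\in\wh{\lie h}$. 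Working with \emph{all} plurisubharmonic functions (not just one exhaustion) is what makes the propagation-of-constancy argument go through and is the missing idea in your outline.

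Two smaller points. Your first step---arguing that the integral curve cannot be $\mbb C^*$ or an elliptic curve---is both unnecessary and incorrect: the flow is complete because the orbit is relatively compact, so the parameter space is $\mbb C$, and that is all one needs (a bounded subharmonic function on $\mbb C$ is constant); the \emph{image} may well be $\mbb C^*$ or a compact curve, and nothing in the argument excludes this. Your third step is essentially right; the paper constructs $F_{x_0}$ by mapping the simply connected group $\wt H$ with Lie algebra $\wh{\lie h}$ into $D$ (extending by density from $\exp(\wh{\lie h})$), then transports this leaf by the local $G$-action to define the foliation, and finiteness of $p|_{F_x}$ follows since a locally biholomorphic map has finite fibers over the image of any relatively compact set.
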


We will prove Theorem~\ref{Thm:Main} in several steps and start by defining the
group $\wh{H}$. For this let $p\colon D\to X$ be a domain over $X=G/H$ and
choose $x_0\in D$ such that $p(x_0)=eH$. We will denote the local flow of the
holomorphic vector field $\wt{\xi}_X$ by $(t,x)\mapsto e^{t\xi} \cdot x$ and its
maximal complex integral manifold through $x$ by $F^\xi_x$. Note that $t$ is a
\emph{complex} parameter and that $F^\xi_x$ is an immersed complex curve. Then
we set
\begin{equation}\label{Eqn:Hhat}
\wh{\lie{h}}:=\bigl\{\xi\in\lie{g};\ \wt{\xi}_X\varphi(x_0)=0\text{ for every
continuous plurisubharmonic function $\varphi$ on $D$}\bigr\}
\end{equation}
where the derivative $\wt{\xi}_X\varphi(x_0)=\left.\frac{d}{dt}\right|_0
\varphi(e^{t\xi}\cdot x_0)$ of a continuous function has to be understood in the
distributional sense and is taken with respect to the complex parameter $t$. If
$X=G/H$ is unimodular, i.e., if $X$ has a $G$-invariant Borel measure, and if
$D\subset X$, then we can use the usual convolution technique in order to
approximate continuous plurisubharmonic functions on $D$ by smooth ones. In this
case we may replace ``continuous" by ``smooth" in the definition of
$\wh{\lie{h}}$. We shall see later that $\wh{\lie{h}}=\lie h$ if and only if a
pseudoconvex $D$ is Stein.

\begin{ex}
We present here an example which turns out to be important in the rest of the
paper. Let $G$ be a {\it Cousin group}, i.e., a connected complex Lie group
without non-constant holomorphic functions. It is well-known that $G$ is a
quotient of $ (\mbb C^{n},+)$ by a discrete subgroup $\Gamma$  of rank $n+m$,
$1\leq m\leq n$, generating $\mbb C^{n}$ over $\mbb C$. With $V :=\langle \Gamma
\rangle_{\mbb R}$ and  $W:= V \cap iV$, one has furthermore that $V/\Gamma $ is
the maximal compact subgroup of $G$ and that $W + \Gamma$ is dense in $V$.    
Hence we have $\wh{\lie h}=W$ under the identification $\lie{g}\simeq\mbb
C^{n}$.
\end{ex}

Since we can lift the vector field $\xi_X$ from $X$ to $D$ for every
$\xi\in\lie{g}$, we obtain a \emph{local} holomorphic action of $G$ on $D$ such
that $p\colon D\to X$ is equivariant. This means that there exist an open
neighborhood $\Omega\subset G\times D$ of $\{e\}\times D$ such that $\bigl\{g\in
G;\ (g,x)\in\Omega\bigr\}$ is connected for every $x\in D$, as well as a
holomorphic map $\Phi\colon\Omega\to D$, $\Phi(g,x)=:g\cdot x$, fulfilling the
usual axioms of a group action. For more details we refer the reader
to~\cite{HeIan}. Note that the local $G$-action on $D$ is in general not
globalizable unless $p\colon D\to X$ is schlicht. For this reason there will in
general be no maximal domain of definition $\Omega$ of the local $G$-action. 

For the readers' convenience we repeat some arguments from~\cite{Hir75}
and~\cite{KLY} in order to give the proof of the following.    

\begin{lem}\label{Lem:Subalgebra}
Let $p\colon D\to X$ be pseudoconvex. Then the set $\wh{\lie{h}}$ defined in
equation~\eqref{Eqn:Hhat} is a complex Lie subalgebra of $\lie{g}$.
\end{lem}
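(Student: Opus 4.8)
The plan is to verify separately that $\wh{\lie h}$ is a complex linear subspace of $\lie g$ and that it is stable under the bracket.

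For the subspace property fix a continuous plurisubharmonic function $\varphi$ on $D$ and consider the map $\lie g\ni\xi\mapsto\wt\xi_X\varphi(x_0)$. The reparametrization $e^{t(\lambda\xi)}\cdot x_0=e^{(\lambda t)\xi}\cdot x_0$, $\lambda\in\mbb C$, gives $\wt{(\lambda\xi)}_X\varphi(x_0)=\lambda\,\wt\xi_X\varphi(x_0)$. Additivity I would get by comparing the two holomorphic curves $t\mapsto e^{t(\xi+\eta)}\cdot x_0$ and $t\mapsto e^{t\xi}e^{t\eta}\cdot x_0$, which agree to first order at $t=0$: since $\varphi$ restricted to any holomorphic disc is subharmonic, the distributional $t$-derivative at $0$ of $\varphi$ along the two curves is the same, and for the second one a mollification carried out in each of the two complex parameters yields
\[
\frac{d}{dt}\Big|_{0}\varphi\big(e^{t\xi}e^{t\eta}\cdot x_0\big)=\wt\xi_X\varphi(x_0)+\wt\eta_X\varphi(x_0).
\]
Hence $\wh{\lie h}=\bigcap_{\varphi}\ker\big(\xi\mapsto\wt\xi_X\varphi(x_0)\big)$ is a complex subspace of $\lie g$. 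When $X$ is unimodular one replaces $\varphi$ by a smooth plurisubharmonic function from the start and the two identities above are the usual chain rule.

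For stability under the bracket let $\xi,\eta\in\wh{\lie h}$ and let $\varphi$ be continuous plurisubharmonic on $D$; the aim is $\wt{[\xi,\eta]}_X\varphi(x_0)=0$. I would realize $\wt{[\xi,\eta]}_X\varphi(x_0)$ as a mixed second derivative of an auxiliary plurisubharmonic function of two complex variables. Using $e^{z\xi}e^{w\eta}e^{-z\xi}=e^{w\,\Ad(e^{z\xi})\eta}$, set
\[
\Phi(z,w):=\varphi\big(e^{z\xi}e^{w\eta}e^{-z\xi}\cdot x_0\big)=\varphi\big(e^{w\,\Ad(e^{z\xi})\eta}\cdot x_0\big),
\]
which is continuous plurisubharmonic on a neighbourhood of $0$ in $\mbb C^2$ and satisfies $\Phi(z,0)\equiv\varphi(x_0)$. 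Since $\Ad(e^{z\xi})\eta=\eta+z[\xi,\eta]+O(z^2)$ depends holomorphically on $z$, the linearity established above gives
\[
\frac{\partial\Phi}{\partial w}(z,0)=\wt{\big(\Ad(e^{z\xi})\eta\big)}_X\varphi(x_0)=\wt\eta_X\varphi(x_0)+z\,\wt{[\xi,\eta]}_X\varphi(x_0)+O(z^2),
\]
so, using $\eta\in\wh{\lie h}$, one obtains $\wt{[\xi,\eta]}_X\varphi(x_0)=\frac{\partial^2\Phi}{\partial z\,\partial w}(0,0)$. It then remains to see that this mixed derivative vanishes. This does not follow from plurisubharmonicity of $\Phi$ alone — the obstruction is precisely the pluriharmonic term $\operatorname{Re}\big(\wt{[\xi,\eta]}_X\varphi(x_0)\,zw\big)$ in the expansion of $\Phi$ at the origin — so one also uses that the defining conditions of $\wh{\lie h}$ hold for \emph{every} plurisubharmonic function and that $\mathscr O(D)$ is stable under each operator $\wt\xi_X$ (whence $\wt\xi_X g(x_0)=0$ for all $g\in\mathscr O(D)$ forces $\wt\xi_X(\wt\eta_X g)(x_0)=0$, i.e.\ the conditions already close up on holomorphic functions). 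With $\xi\in\wh{\lie h}$ controlling $\Phi$ near the origin, this is the analytic argument taken over from~\cite{Hir75} and~\cite{KLY}, and running it for every $\varphi$ yields $[\xi,\eta]\in\wh{\lie h}$.

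The step I expect to be the real obstacle is this last one, together with the requirement that everything be done for merely continuous plurisubharmonic functions and distributional derivatives, the smoothing by convolution being unavailable when $X$ is not unimodular. That is the technical heart borrowed from Hirschowitz; the manipulations with the local $G$-action on $D$ surrounding it are routine.
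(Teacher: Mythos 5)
Your reduction of the bracket step to the vanishing of the mixed derivative $\frac{\partial^2\Phi}{\partial z\,\partial w}(0,0)$ is exactly where the argument breaks down, and you half admit this yourself: plurisubharmonicity of $\Phi$ gives no control on this holomorphic--holomorphic second order term, and the patch you propose does not close the gap. Stability of $\mathscr{O}(D)$ under the operators $\wt{\xi}_X$ only tells you that the very special plurisubharmonic functions $\pm\re g$ with $g\in\mathscr{O}(D)$ cause no obstruction; the pluriharmonic term $\re\bigl(c\,zw\bigr)$ in the expansion of $\Phi$ at the origin is a local feature of the composed function on $\mbb{C}^2$ and is not the restriction of any global pluriharmonic (let alone holomorphic) object on $D$ to which the defining condition of $\wh{\lie{h}}$ could be applied, so for a general continuous plurisubharmonic $\varphi$ nothing forces $c=\wt{[\xi,\eta]}_X\varphi(x_0)$ to vanish. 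The additivity step has a related weakness: for a merely continuous $\varphi$ (no Lipschitz control, and no convolution available since $D$ is only spread over $X$), two holomorphic curves agreeing to first order at $t=0$ need not produce the same distributional $t$-derivative of $\varphi$ at $0$, and indeed these pointwise derivatives need not even exist; so even the subspace property is not established as written.

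The missing idea is the globalization step which the paper does first and which makes everything else soft. One shows that if $\wt{\xi}_X\varphi(x_0)=0$ for \emph{every} continuous plurisubharmonic $\varphi$ on $D$, then every such $\varphi$ is in fact \emph{constant on the whole integral curve} $F^\xi_{x_0}$: transport $\varphi$ by the local flow on a sublevel set $D_\alpha$ of the exhaustion $\rho$ and glue $\varphi\circ e^{t\xi}$ with $\chi\circ\rho$ via a maximum (Hirschowitz's construction) to obtain a globally defined plurisubharmonic function agreeing with it near $x_0$; the vanishing then propagates along $F^\xi_{x_0}$ by an open--closed argument. Applying this to $\rho$ shows that $F^\xi_{x_0}$ is relatively compact in $D$ for $\xi\in\wh{\lie{h}}$; since a subharmonic function on $\mbb{C}$ bounded from above is constant, one deduces by induction that $\rho$ is constant on finite products $e^{t_1\xi_1}\dotsb e^{t_k\xi_k}\cdot x_0$, and both linearity and bracket closure then drop out of the Campbell--Baker--Hausdorff expansions $e^{t\lambda_1\xi_1}e^{t\lambda_2\xi_2}=e^{t(\lambda_1\xi_1+\lambda_2\xi_2)+O(t^2)}$ and $e^{\sqrt{t}\,\xi_1}e^{\sqrt{t}\,\xi_2}e^{-\sqrt{t}\,\xi_1}e^{-\sqrt{t}\,\xi_2}=e^{t[\xi_1,\xi_2]+O(t^{3/2})}$, applied to functions already known to be constant on these product curves. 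Your purely infinitesimal computation at $x_0$ cannot substitute for this: the relative compactness of the one-parameter orbits is precisely what converts the pointwise condition into constancy and kills the pluriharmonic obstruction you identified.
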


\begin{proof}
The key point is to observe that if $\wt{\xi}_X\varphi(x_0)=0$ for every
continuous plurisubharmonic function on $D$, then every such $\varphi$ must be
constant on $F^\xi_{x_0}$. To see this, let $\rho$ be a continuous
plurisubharmonic exhaustion function of $D$ and set $D_\alpha:=\bigl\{x\in D;\
\rho(x)<\alpha\bigr\}$. Choose $\alpha\in\mbb{R}$ such that $x_0\in D_\alpha$.
For $\abs{t}$ sufficiently small we have
\begin{equation*}
e^{t\xi}\cdot D_{\alpha+1}\supset D_\alpha\ni x_t:= e^{t\xi}\cdot x_0
\end{equation*}
Hence, we have the holomorphic map $e^{t\xi}\colon D_\alpha\to D_{\alpha+1}$
and therefore $\varphi_{-t}:=\varphi\circ e^{t\xi}$ is continuous
plurisubharmonic on $D_\alpha$ whenever $\varphi$ is continuous plurisubharmonic
on $D_{\alpha+1}$. Following~\cite[Proposition~1.6]{Hir75} we construct a
continuous plurisubharmonic function $\psi_{-t}$ on $D$ which coincides with
$\varphi_{-t}$ in a neighborhood of $x_0$. Choose $\beta\in\mbb{R}$ such that
$\varphi_{-t}(x_0)<\beta<\alpha$ and note that $K:=\rho^{-1}(\beta)\subset
D_\alpha$ is compact. Then choose a convex increasing function $\chi$ on
$\mbb{R}$ fulfilling
\begin{align*}
\chi\bigl(\rho(x_0)\bigr) &< \varphi_{-t}(x_0)\quad\text{ and}\\
\chi(\beta) &> \norm{\varphi_{-t}}_K.
\end{align*}
Finally, define $\psi_{-t}\colon D\to\mbb{R}$ by
\begin{equation*}
\psi_{-t}(x):=
\begin{cases}
\max\bigl(\varphi_{-t}(x), \chi\circ\rho(x)\bigr) &: \rho(x)\leq\beta\\
\chi\circ\rho(x) &: \rho(x)\geq\beta.
\end{cases}
\end{equation*}
One checks directly that $\psi_{-t}$ is continuous plurisubharmonic and
coincides with $\varphi_{-t}$ in some neighborhood of $x_0$. Consequently, we
may calculate
\begin{equation*}
\wt{\xi}_X\varphi(x_{-t})=\left.\frac{d}{ds}\right|_t\varphi\bigl(e^{-s\xi}
\cdot x_0\bigr)=\wt{\xi}_X\varphi_{-t}(x_0)=\wt{\xi}_X\psi_{-t}(x_0)=0.
\end{equation*}
Hence, the set of $x_t$ such that $\wt{\xi}_X\varphi(x_t)=0$ holds for every
continuous plurisubharmonic $\varphi$ is open in $F^\xi_{x_0}$, and since it is
also closed, we see that $\wt{\xi}_X\varphi$ vanishes on $F^\xi_{x_0}$ as a
distribution for every continuous plurisubharmonic $\varphi$. But this implies
in turn that $\varphi$ is constant on $F^\xi_{x_0}$. Thus the proof of our first
claim is finished.

Applying this fact to the continuous exhaustion function $\rho$ of $D$, we
conclude that for every $\xi\in\wh{\lie{h}}$ the maximal integral manifold
$F^\xi_{x_0}$ is relatively compact and contained in $D$. In particular, we have
$F^\xi_{x_0}=\mbb{C}^\xi\cdot x_0$ where $\mbb{C}^\xi$ is the universal covering
of $\exp(\mbb{C}\xi)$. In order to finish the proof that $\wh{\lie{h}}$ is a
complex Lie subalgebra of $\lie{g}$, we will show that for every finite
collection $\xi_1,\dotsc,\xi_k\in\wh{\lie{h}}$ the map
\begin{equation*}
(t_1,\dotsc,t_k)\mapsto\rho\bigl(e^{t_{1}\xi_{1}}\dotsb e^{t_{k}\xi_{k}}\cdot x_0
\bigr)
\end{equation*}
defined on a neighborhood of $0\in\mbb{C}^k$ has vanishing differential wherever
it is defined. To see this choose $\alpha\in\mbb{R}$ such that $\mbb{C}^{\xi_k}
\cdot x_0$ is relatively compact in $D_\alpha\Subset D$. If $\abs{t_j}$ is
sufficiently small for all $j=1,\dotsc,k-1$, the map
\begin{equation*}
\rho\circ e^{t_{1}\xi_{1}}\circ\dotsb\circ e^{t_{k-1}\xi_{k-1}}
\end{equation*}
is defined and continuous plurisubharmonic on $D_\alpha$. Consequently, this
map is constant on $\mbb{C}^{\xi_k}\cdot x_0$, and the claim follows by
induction over $k$.

Having established this, we use the following argument based on the
Campbell-Baker-Hausdorff formula (cf.~\cite[pp.31--32]{KLY}). Let
$\xi_1,\xi_2\in\wh{\lie{h}}$ and $\lambda_1,\lambda_2\in\mbb{C}$. Then we
conclude from $\rho\bigl(e^{t \lambda_{1}\xi_{1}}e^{t \lambda_{2}\xi_{2}}\cdot
x_0)=\rho(x_0)$ and $e^{t\lambda_{1}\xi_{1}}e^{t\lambda_{2}\xi_{2}}=
e^{t\lambda_{1}\xi_{1}+{t\lambda_{2}\xi_{2}}+O(t^2)}$ that $\lambda_1\xi_1
+\lambda_2 \xi_2\in\wh{\lie{h}}$ holds. Hence $\wh{\lie{h}}$ is a complex
subspace of $\lie{g}$. To see that $[\xi_1,\xi_2]$ lies in $\wh{\lie{h}}$ we use
\begin{equation*}
\rho\bigl(e^{\sqrt{t}\xi_1}e^{\sqrt{t}\xi_2}e^{-\sqrt{t}\xi_1}
e^{-\sqrt{t}\xi_2}\cdot x_0\bigr)=\rho(x_0)
\end{equation*}
together with
\begin{equation*}
e^{\sqrt{t}\xi_1}e^{\sqrt{t}\xi_2}e^{-\sqrt{t}\xi_1} e^{-\sqrt{t}\xi_2}
=e^{t[\xi_1,\xi_2]+O(t^{3/2})}
\end{equation*}
hence completing the proof that $\wh{\lie{h}}$ is a complex Lie subalgebra of
$\lie{g}$.
\end{proof}

The following example shows that even if $\wh{\lie{h}}$ is a subalgebra, without
pseudoconvexity of $D$ we do not obtain a foliation with relatively compact
leaves.

\begin{ex}
Consider the homogeneous space $X=\mbb{P}_2\setminus\bigl\{[e_1]\bigr\}\simeq
P/H$ for $P=\left\{\left(\begin{smallmatrix}*&*&*\\0&*&*\\0&*&*\end{smallmatrix}
\right)\right\}\simeq{\rm{GL}}(2,\mbb{C})\ltimes\mbb{C}^2$ and
$H=\left\{\left(\begin{smallmatrix}*&0&*\\0&*&*\\0&0&*\end{smallmatrix}
\right)\right\}\simeq(\mbb{C}^*)^2\ltimes\mbb{C}^2$. It follows from
Theorem~\ref{Thm:Hirsch} that every pseudoconvex domain spread over $X$ is
Stein. However, $X$ itself is not pseudoconvex and there does not exist a
maximal complex subgroup $\wh{H}$ of $P$ such that $\wh{H}\cdot eH$ is
relatively compact in $X$. Therefore the existence of such an $\wh{H}$ is not a
purely Lie theoretic property.
\end{ex}

After these preparations we are in the position to prove the theorem.  

\begin{proof}[Proof of Theorem~\ref{Thm:Main}]
We define $\wh{H}$ to be the analytic subgroup of $G$ with Lie algebra
$\wh{\lie{h}}$. Since $\lie{h}$ is contained in $\wh{\lie{h}}$, we have
$H^0\subset\wh{H}$. In order to show that $\dim H<\dim\wh{H}$ we suppose from
now on that the pseudoconvex domain $D$ is not Stein. It follows from
Theorem~\ref{Thm:Hirsch} that there exists $\xi\in\lie{g}$ such that
$\wt{\xi}_X(x_0)\not=0$ and such that $\mbb{C}^\xi\cdot x_0$ is relatively
compact in $D$. Since subharmonic functions on $\mbb{C}$ which are bounded from
above must be constant, we see that $\xi\in\wh{\lie{h}}\setminus\lie{h}$.
In particular, we have that $D$ is Stein if and only if $\wh{\lie{h}}=\lie{h}$.

In order to define the foliation $\mathscr{F}$ of $D$ we first construct a
relatively compact immersed complex submanifold of $D$ which contains $x_0\in D$
and which will become the leaf $F_{x_0}$. For this let $\wt{H}$ be the
simply-connected complex Lie group with Lie algebra $\wh{\lie{h}}$. As we have
seen, for every $\xi\in\wh{\lie{h}}$ the integral manifold $F^\xi_{x_0}$ is
relatively compact in $D$. This implies that we can define a map $\Phi$ on
$\exp(\wh{\lie{h}})\subset\wt{H}$ with values in $D$ such that $F^\xi_{x_0}$ is
the image of  $\exp(\mbb{C}\xi)$ under $\Phi$. Since $\exp(\wh{\lie{h}})$ is
dense in $\wt{H}$ (see~\cite{HM78}), we can extend $\Phi$ to $\wt{H}$ and
obtain an equivariant holomorphic map $\wt{H}\to D$ whose image is an immersed
complex submanifold $F_{x_0}$. Note that $\rho$ is constant on $F_{x_0}$, so
that $F_{x_0}$ is relatively compact in $D$. Moreover, it follows from the
definition of $F_{x_0}$ that every inner integral curve of $D$ passing through
$x_0$ is contained in $F_{x_0}$.

We define the foliation $\mathscr{F}$ of $D$ by moving around $F_{x_0}$ with
the local $G$-action on $D$. To make this precise, note that for every $x\in
D$ there exists elements $g_1,\dotsc,g_k\in G$ such that $g_1\cdot\bigl(\dotsb
(g_k\cdot x_0)\bigr)$ is defined and equals $x$. Then we set
\begin{equation*}
F_x:=g_1\cdot\bigl(\dotsb(g_k\cdot F_{x_0})\bigr).
\end{equation*}
The product $g_k\cdot F_{x_0}$ on the right hand side is defined since $F_{x_0}$
is relatively compact. The foliation is well-defined because of the following
observation. Suppose we have $x_0=g_1\cdot\bigl(\dotsb(g_k\cdot x_0)\bigr)$.
Then $F_{x_0}$ and $g_1\cdot\bigl(\dotsb(g_k\cdot F_{x_0})\bigr)$ must coincide
since $F_{x_0}$ contains every inner integral curve of $D$ passing through $x_0$
as noted above. One checks locally that $\mathcal{F}$ indeed defines a foliation
of $D$ fulfilling the first three statements of Theorem~\ref{Thm:Main}. For the
last statement note the following: Let $p \colon D \rightarrow X$  be a locally
biholomorphic map between complex manifolds, $A \subset D$ be a relatively compact
set and $B:= \pi(A)$. Then for every $b \in B$ the set $p^{-1}(b) \cap A$ is
finite. It follows that the restriction of $p$ to a leaf in $D$ is a finite
covering map onto its image in $X$.
\end{proof}
 
\begin{ex}
A simple method to produce examples for Theorem~\ref{Thm:Main} is the following.
Let $Y=G/L$ be a complex homogeneous space and let $H$ be a closed complex
subgroup of $L$ such that $L/H$ is compact. Then $X=G/H$ contains many
pseudoconvex non-Stein domains, e.g., pre-images of balls in $Y$ under the
fibration $G/H\to G/L$. For these domains we have $\wh{H}=L$. Concretely,
consider an embedding of $L={\rm{SL}}(2,\mbb{C})$ into $G={\rm{SL}}(3,\mbb{C})$
and let $H$ be a discrete cocompact subgroup of $L$. In this case, the
homogeneous space $G/H$ is even holomorphically convex.
\end{ex}

Since $H^0$ normalizes the group $\wh{H}$ it is clear that there exists a
maximal open subgroup of $H$ with this property. Defining
\begin{equation*}
\wt{D}:=D\times_X G:=\bigl\{(x,g)\in D\times G;\ p(x)=gH\bigr\}
\end{equation*}
we have the commutative diagram
\begin{equation*}
\xymatrix{
\wt{D} \ar[r]^{\wt{p}} \ar[d]_{\wt{\pi}} & G\ar[d]^{\pi}\\
D \ar[r]_p & X,
}
\end{equation*}
where $\wt{p}\colon\wt{D}\to G$ is locally biholomorphic and $\wt{\pi}\colon
\wt{D}\to D$ is a principal $H$-bundle over $D$. Note however that $\wt{D}$ is
not a domain spread over $G$ since it is in general not connected. Choose the
reference point $\wt{x}_0:=(x_0,e)\in\wt{D}$ and let $\wt{D}^0$ be the connected
component of $\wt{D}$ containing $\wt{x}_0$. Then we define the open subgroup
\begin{equation}\label{Eqn:Hstar}
H^*=\bigl\{h\in H;\ h\cdot \wt{x}_0\in\wt{D}^0\bigr\}
\end{equation}
of $H$. In particular, if $D=X=G/H$ is pseudoconvex, then $\wt{D}\simeq G$ and
$H^*=H$. Hence, the group $\wh{H}$ is normalized by $H$ as we shall see in

\begin{prop}\label{Prop:normalizing}
The group $H^*$ defined in equation~\eqref{Eqn:Hstar} normalizes $\wh{H}$, and
$D$ is biholomorphic to $\wt{D}^0/H^*$ which can be realized as a domain spread
over $X^*:=G/H^*$. Moreover, we have $\wh{H}=\wh{H^*}$. In other words, after
replacing $X=G/H$ by a covering we may suppose that $H$ normalizes $\wh{H}$.
\end{prop}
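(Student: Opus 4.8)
The plan is to first set up the fiber‑bundle picture relating $\wt{D}^0$ and $D$; the realization of $D$ as a domain over $X^*$ and the equality $\wh{H}=\wh{H^*}$ will then be essentially formal, and only the normalization statement will require a real argument. \textbf{Step 1 (bundle picture).} Since $\wt{\pi}\colon\wt{D}\to D$ is a principal $H$-bundle and $\wt{D}^0$ is open and closed in $\wt{D}$, for every connected trivializing open set $U\subseteq D$ the intersection $\wt{D}^0\cap\wt{\pi}^{-1}(U)$ is open and closed in $\wt{\pi}^{-1}(U)\cong U\times H$, hence of the form $U\times T_U$ with $T_U$ open and closed in $H$. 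It follows that $\wt{\pi}(\wt{D}^0)$ is open and closed in the connected manifold $D$, so $\wt{\pi}(\wt{D}^0)=D$; moreover $H^*$ is exactly the stabilizer in $H$ of the component $\wt{D}^0$, and for each $x\in D$ the set $\wt{\pi}^{-1}(x)\cap\wt{D}^0$ is a single $H^*$-orbit. Hence $\wt{\pi}$ restricts to a principal $H^*$-bundle $\wt{D}^0\to D$, so $D$ is biholomorphic to $\wt{D}^0/H^*$.

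\textbf{Steps 2 and 3 (spreading over $X^*$, and $\wh{H}=\wh{H^*}$).} Next I would observe that $\wt{p}|_{\wt{D}^0}\colon\wt{D}^0\to G$ is locally biholomorphic and intertwines the principal $H^*$-action with the right translations of $H^*$ on $G$, so it descends to a locally biholomorphic map $q\colon D\cong\wt{D}^0/H^*\to G/H^*=X^*$; one has $p=\mu\circ q$ for the covering $\mu\colon X^*\to X$ coming from $H^*\subseteq H$, and $q(x_0)=eH^*$ since $x_0$ is represented by $\wt{x}_0=(x_0,e)$. For the third assertion, note that for every $\xi\in\lie{g}$ the vector field $\wt{\xi}_{X^*}$ on $D$ (the $q$-lift of the push-down of $\xi$ to $X^*$) coincides with $\wt{\xi}_X$, because $\xi_{X^*}$ is the $\mu$-lift of $\xi_X$ and $p=\mu\circ q$. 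Since moreover $q(x_0)=eH^*$, the condition~\eqref{Eqn:Hhat} defining $\wh{\lie{h^*}}$ for the domain $D$ over $X^*$ is verbatim the one defining $\wh{\lie h}$ for $D$ over $X$; hence $\wh{\lie{h^*}}=\wh{\lie h}$ and $\wh{H^*}=\wh{H}$.

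\textbf{Step 4 ($H^*$ normalizes $\wh{H}$).} First I would record a transformation law: writing $\wh{\lie h}(y)$ for the subalgebra of~\eqref{Eqn:Hhat} computed at a base point $y\in D$, one has $\wh{\lie h}(g\cdot y)=\Ad(g)\,\wh{\lie h}(y)$ whenever $g\cdot y$ is defined by the local $G$-action. This holds because the local biholomorphism $\Lambda_g\colon y'\mapsto g\cdot y'$ covers the biholomorphism $x'H\mapsto gx'H$ of $X$ and satisfies $(\Lambda_g)_*\wt{\xi}_X=\widetilde{(\Ad(g)\xi)}_X$ (which follows since $(L_g)_*\xi=\Ad(g)\xi$ for right-invariant vector fields on $G$ and $p$ is $G$-equivariant), while precomposing the test functions with $\Lambda_g$ merely permutes the germs of continuous plurisubharmonic functions at the base points; here one uses the gluing technique of~\cite[Proposition~1.6]{Hir75} as in the proof of Lemma~\ref{Lem:Subalgebra}, together with the fact that~\eqref{Eqn:Hhat} depends only on the behavior of the test functions near the base point. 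Now fix $h\in H^*$. The principal $H$-action lifts the right translations of $G$ along $\wt{p}$ and commutes with the local $G$-action on $\wt{D}^0$, which along $\wt{p}$ covers the left translations. Since $h\cdot\wt{x}_0\in\wt{D}^0$ by definition, there is a path in $\wt{D}^0$ from $\wt{x}_0$ to $h\cdot\wt{x}_0$; subdividing it and using that $\wt{p}$ is locally biholomorphic, I can write $h\cdot\wt{x}_0=g_k\cdot\bigl(\dotsb(g_1\cdot\wt{x}_0)\bigr)$ for suitable $g_1,\dotsc,g_k\in G$. Applying $\wt{p}$ gives $g_k\dotsb g_1=h$ (up to inversion if the convention for the $H$-action is the opposite one, which changes nothing below), and applying $\wt{\pi}$ gives $g_k\cdot\bigl(\dotsb(g_1\cdot x_0)\bigr)=x_0$ in $D$. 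Iterating the transformation law now yields $\wh{\lie h}=\wh{\lie h}(x_0)=\Ad(g_k\dotsb g_1)\,\wh{\lie h}(x_0)=\Ad(h)\,\wh{\lie h}$, so $h$ normalizes $\wh{H}$; as $h$ was an arbitrary element of $H^*$ and $\wh{H}=\wh{H^*}$, the proposition follows, its last sentence being a reformulation of Steps~1--4.

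\textbf{Main obstacle.} The one non-routine point is Step 4, and within it the transformation law: the care needed is exactly to see that $\wh{\lie h}$ does not distinguish global continuous plurisubharmonic functions on $D$ from their germs along the relevant integral curves, which is where Hirschowitz' gluing lemma enters. Once that is in place, passing to the cover $X^*$ is precisely what arranges that the composite of local $G$-moves closing up over $x_0$ realizes the whole of $H^*$ rather than only $H^0$, and the normalization drops out.
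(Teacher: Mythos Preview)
Your proof is correct, and Steps~1--3 are essentially the paper's argument (the paper even leaves $\wh{H}=\wh{H^*}$ implicit, so your Step~3 is more explicit). The genuine difference is in Step~4. The paper does \emph{not} use a transformation law for $\wh{\lie h}$; instead it invokes the foliation $\mathscr{F}$ from Theorem~\ref{Thm:Main}. Concretely, it introduces
\[
\Omega_D:=\bigl\{(x,g)\in\wt{D}^0;\ (x,g\wh{H})\subset\wt{D}^0\text{ and }\wt{\pi}(x,g\wh{H})\text{ is relatively compact in }D\bigr\},
\]
shows $\Omega_D=\wt{D}^0$ by an open--closed argument using the relative compactness of the leaves, and then for $h\in H^*$ observes that $\wt{\pi}(x_0,h\wh{H}h^{-1})$ is a relatively compact $\wh{H}$-homogeneous set through $x_0$; the maximality property of $F_{x_0}$ (it contains every inner integral curve through $x_0$) forces $h\wh{H}h^{-1}=\wh{H}$.

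So the paper's route is geometric and leans on the full strength of Theorem~\ref{Thm:Main} (existence and maximality of the relatively compact leaves), whereas your route is more infinitesimal: you reduce everything to the identity $\wh{\lie h}(g\cdot y)=\Ad(g)\,\wh{\lie h}(y)$, whose only analytic input is the Hirschowitz gluing lemma showing that \eqref{Eqn:Hhat} depends only on germs of plurisubharmonic functions. Your argument therefore does not require $D$ to be non-Stein or the foliation to be in place, and it makes transparent why passing to $H^*$ (rather than $H$) is exactly what is needed: $H^*$ is precisely the set of $h$ for which the loop in $\wt{D}^0$ realizing $h$ as a product of local $G$-moves exists. The paper's argument, on the other hand, ties the normalization statement directly to the geometric object $\mathscr{F}$ that is used throughout the rest of the paper.
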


\begin{proof}
In a first step one checks that $h^{-1}\cdot\wt{x}_0\in\wt{D}^0$ for all $h\in
H^*$. This implies then that $H^*$ is indeed a subgroup of $H$ and that
$H^*=\bigl\{h\in H;\ h\cdot\wt{D}^0=\wt{D}^0\bigr\}$. Moreover, $H^0$ is
contained in $H^*$. Hence $H^*$ is open in $H$, closed in $G$, and acts properly
and freely on $\wt{D}^0$. By definition, we have $\wt{\pi}^{-1}\bigl(\wt{\pi}(x)
\bigr)\cap\wt{D}^0=H^*\cdot x$ for all $x\in\wt{D}^0$, and thus
$\wt{D}^0/H^*\simeq D$. Finally, one sees directly that the map $\wt{D}^0\to
D^*:=D\times_X(G/H^*)$, $(x,g)\mapsto(x,gH^*)$, induces an isomorphism between
$\wt{D}^0/H^*$ and the connected component of $D^*$ containing $(x_0,eH^*)$, so
that $D$ can be realized as a domain spread over $G/H^*$.

To finish the proof we must show that $H^*$ normalizes $\wh{H}=\wh{H^*}$. For
this we define the set
\begin{equation*}
\Omega_D:=\bigl\{(x,g)\in\wt{D}^0;\ (x,g\wh{H})\subset\wt{D}^0\text{ and }
\wt{\pi}(x,g\wh{H})\text{ is relatively compact in $D$}\bigr\}.
\end{equation*}
Since $\wt{\pi}(x_0,\wh{H})=F_{x_0}$ is relatively compact in $D$, we see that
$\Omega_D$ is a non-empty open subset of $\wt{D}^0$. Moreover, since the
leaves of $\mathscr{F}$ are relatively compact in $D$, the set $\Omega_D$ is
also closed in $\wt{D}^0$. Hence $\Omega_D=\wt{D}^0$. This implies that the
groups $H^*$ and $\wh{H}$ act on $\wt{D}^0$ by
\begin{equation*}
h\cdot(x,g):=(x,gh^{-1}).
\end{equation*}
Consequently, for each $h\in H^*$ we have $(x_0,h\wh{H}h^{-1})\subset\wt{D}^0$
and can conclude that
\begin{equation*}
\wt{\pi}(x_0,h\wh{H})=\wt{\pi}(x_0,h\wh{H}h^{-1})
\end{equation*}
is relatively compact in $D$. Since $x_0\in\wt{\pi}(x_0,h\wh{H})$ holds, we
conclude from the maximality of $F_{x_0}$ that $F_{x_0}=\wt{\pi}(x_0,h\wh{H})$.    
Hence $h\wh{H}h^{-1}=\wh{H}$.
\end{proof}

The following example shows that in general $\wh{H}$ is not normalized by the
whole group $H$.

\begin{ex}
Let $G:= {\rm{SL}}(2,\mbb C)$ and $H={\rm{SL}}(2,\mbb Z) \subset G$. Since
$H$ is Zariski dense in $G$ there is no proper connected complex subgroup of $G$
normalized by $H$. Let $X:= G/H$,
\begin{equation*}
A:=\begin{pmatrix}2&1\\1&1\end{pmatrix}\in G,
\end{equation*}
and let $\wh H \simeq \mbb C^*$ denote the Zariski closure of $H^* := \{A^k \mid k
\in \mbb Z \} \subset H$ in G. Then the orbit of $\wh H$ through the point
$eH$ in $X$ is an elliptic curve, say, $E$. Let $B \subset G$ be a Borel
subgroup transversal to $\wh H$. Then there is a small open neighborhood $U
\subset B$ biholomorphic to the unit ball $\mbb B_2$ in $\mbb C^2$, such that
$D:=U \cdot E \subset X$ is isomorphic to $\mbb B_2 \times E$. Therefore $D$ is
a pseudoconvex domain in $X$ and we see that $H\wh H$ is not a subgroup of $G$.
Taking the covering $\pi\colon Y:= G/H^*\to G/H = X$, one sees that the
map
\begin{equation*}
\pi\vert_{{\pi}^{-1}(D)}\colon{\pi}^{-1}(D) \to D
\end{equation*}
is biholomorphic. So by taking the open subgroup $H^*$ of $H$ and the associated
covering, we do not change the pseudoconvex domain $D$ but we now have that
$H^*\wh H$ is a group.   
\end{ex}

As an application of Theorem~\ref{Thm:Main} we have the following

\begin{cor} 
Let $p\colon D\to X$ be a pseudoconvex domain over $X=G/H$ such that $eH\in p(D)$.   
If the subgroup $H$ is connected and maximal in $G$, then $D$ is either
compact or Stein.
\end{cor}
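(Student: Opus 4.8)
The plan is to apply Theorem~\ref{Thm:Main} directly and exploit the maximality hypothesis on $H$. Suppose $D$ is not Stein. Since $eH\in p(D)$, Theorem~\ref{Thm:Main} produces a connected complex Lie subgroup $\wh H$ of $G$ with $H^0\subset\wh H$ and $\dim H<\dim\wh H$, together with the foliation $\mathscr F$ whose leaves are relatively compact immersed complex submanifolds. Because $H$ is assumed connected, $H^0=H$, so $H\subsetneq\wh H$ as complex subgroups. Now maximality of $H$ among (closed, connected) complex subgroups of $G$ forces $\wh H=G$, provided one checks that $\wh H$ is closed — or, more carefully, that the closure $\overline{\wh H}$ is a connected complex subgroup strictly containing $H$, hence equals $G$; I would argue via the density statement already used in the proof of Theorem~\ref{Thm:Main} (that $\exp(\wh{\lie h})$ is dense in the simply connected group with Lie algebra $\wh{\lie h}$), so that in any case $\wh{\lie h}=\lie g$.

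Once $\wh H$ (or its closure) is all of $G$, the leaf $F_{x_0}$ through the base point is, by statement (3) of Theorem~\ref{Thm:Main}, homogeneous under a covering group of $\wh H$ and maps onto its image in $X$ by a finite covering; but a covering group of $G$ acting with the relevant stabilizer yields an image that is all of $X=G/H$ (since $\wh H\cdot eH$ is the whole orbit once $\wh H=G$). Hence $F_{x_0}$ is a finite covering of $X$. On the other hand $F_{x_0}$ is relatively compact in $D$, so its image in $X$ — which is $X$ itself — is relatively compact in $X$, forcing $X$ to be compact. A relatively compact leaf that is a finite covering of a compact manifold is itself compact, and in fact $p|_{F_{x_0}}\colon F_{x_0}\to X$ being a finite covering with $X$ compact makes $F_{x_0}$ compact; since $F_{x_0}$ is a closed relatively compact subset of $D$ mapping onto $X$ locally biholomorphically, and $D$ is connected, one concludes $D=F_{x_0}$ is compact. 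If instead $D$ is Stein we are already in the second alternative of the corollary.

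The main obstacle I anticipate is the non-closedness of $\wh H$: maximality of $H$ is naturally a statement about closed subgroups, so I must be careful to conclude $\wh{\lie h}=\lie g$ rather than merely ``$\wh H$ is not a proper closed subgroup.'' The clean way around this is to observe that the Lie algebra $\wh{\lie h}$ is an $\lie h$-invariant (since $H^0$ normalizes $\wh H$) complex subalgebra strictly containing $\lie h$, so the connected closed subgroup $\overline{\wh H}$ with Lie algebra $\overline{\wh{\lie h}}$ (the Lie algebra of the closure) is a closed complex subgroup strictly containing $H$; maximality then gives $\overline{\wh H}=G$, and then one returns to the fact that the leaf $F_{x_0}$ is relatively compact to force $\dim\wh{\lie h}=\dim\lie g$ anyway (otherwise $F_{x_0}$, being dense-image of a lower-dimensional group, could not be a relatively compact immersed submanifold mapping by a covering onto a proper orbit — here one uses that $G/H$ with $H$ maximal has no intermediate orbits). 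The remaining steps — transferring relative compactness in $D$ to compactness of $X$ and then to compactness of $D$ — are routine once this structural point is settled.
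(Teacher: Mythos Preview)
Your core idea---apply Theorem~\ref{Thm:Main} and use maximality to force $\wh H$ to be either $H$ or $G$---is exactly the paper's approach, and the paper's proof is only three lines. The complications you introduce are unnecessary and one of them is actually wrong.

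First, the closedness worry is a red herring. In this context ``$H$ connected and maximal'' means maximal among connected complex Lie subgroups, equivalently that $\lie h$ is a maximal complex subalgebra of $\lie g$. Since $\wh{\lie h}$ is a complex subalgebra (Lemma~\ref{Lem:Subalgebra}) strictly containing $\lie h$ when $D$ is not Stein, maximality gives $\wh{\lie h}=\lie g$ immediately---no passage to closures is needed. Your attempted workaround, asserting that $\overline{\wh H}$ is a connected \emph{complex} subgroup, is false in general: the topological closure of a complex analytic subgroup of a complex Lie group is a real Lie subgroup but need not be complex (think of a one-parameter complex subgroup winding densely in a compact complex torus). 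So that step does not work as written, and the subsequent hand-wave about ``no intermediate orbits'' does not repair it.

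Second, once $\wh{\lie h}=\lie g$, the conclusion that $D$ is compact is immediate and does not require detouring through compactness of $X$. The foliation $\mathscr F$ then has leaves of full dimension $\dim D$, so each leaf is open; since $D$ is connected the foliation has a single leaf, namely $D$ itself. By Theorem~\ref{Thm:Main}(1) this leaf is relatively compact in $D$, i.e.\ $D=\overline{D}$ is compact. Your argument via ``$p(F_{x_0})=X$ relatively compact in $X$'' and then pulling compactness back to $D$ is correct in spirit but circuitous.
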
  

\begin{proof} 
Since $H \subset \wh{H} \subset G$ and $H$ is maximal, either $\wh{H}=G$ or
$\wh{H}=H$. In the first case $D$ itself consists of exactly one leaf of the
foliation and thus is compact. Otherwise $\wh{H}=H$, i.e., $D$ contains no
inner integral curve, and so $D$ is Stein.
\end{proof}

Note that the isotropy subgroups of projective space and also of $Gr(k,n)$, the
Grassmann manifold of $k$-dimensional subspaces of an $n$-dimensional vector
space, are connected and maximal. Hence we have reproduced some classical
results, e.g., see \cite{Fuj63}, \cite{Hir75}, \cite{Nis62}, \cite{Tak64} and
\cite{Ue80}.

\begin{cor}
Let $G$ be a simple complex Lie group and $\Gamma$ a discrete Zariski dense
subgroup of $G$. If $X=G/\Gamma$ is pseudoconvex, then $X$ is compact.
\end{cor}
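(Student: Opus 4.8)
The plan is to establish two things: that $X=G/\Gamma$ can never be Stein, and that for pseudoconvex $X$ this then forces compactness by way of Theorem~\ref{Thm:Main}. For the first point I would only need the factorization of the holomorphic reduction recalled in the introduction: by~\cite{BO} the holomorphic reduction $G/\Gamma\to G/J$ factors as $G/\Gamma\to G/\ol{\Gamma}\to G/J$, where $\ol{\Gamma}$ is the Zariski closure of $\Gamma$ in $G$. Since $\Gamma$ is Zariski dense, $\ol{\Gamma}=G$, so $G/\ol{\Gamma}$ is a single point; as the $G$-equivariant map $G/\ol{\Gamma}\to G/J$ between homogeneous spaces is surjective, $G/J$ is a point as well, and hence $\mathscr{O}(G/\Gamma)=\mbb{C}$. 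In particular $X$ is not holomorphically separable, hence not Stein, because $\dim X=\dim G>0$.

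Assuming now that $X$ is pseudoconvex, I would apply Theorem~\ref{Thm:Main} to the schlicht domain $p=\id\colon X\to X$ with base point $e\Gamma$; this is legitimate since $X$ is pseudoconvex and, by the previous paragraph, not Stein. It produces a connected complex Lie subgroup $\wh{H}\subset G$ with $\{e\}=\Gamma^0\subset\wh{H}$ and $0=\dim\Gamma<\dim\wh{H}$, together with a foliation $\mathscr{F}$ of $X$ whose leaves are relatively compact. Because $X$ itself is pseudoconvex, the remark preceding Proposition~\ref{Prop:normalizing} gives $\wt{D}\simeq G$ (which is connected) and $H^*=\Gamma$; hence that proposition applies and tells us that $\Gamma$ normalizes $\wh{H}$, i.e.\ that $\Ad(\gamma)\wh{\lie{h}}=\wh{\lie{h}}$ for all $\gamma\in\Gamma$, where $\wh{\lie{h}}=\Lie(\wh{H})$.

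The Lie-theoretic core is then a Zariski-density argument. The set of $g\in G$ with $\Ad(g)\wh{\lie{h}}=\wh{\lie{h}}$ is the preimage under $\Ad$ of the stabilizer of the point $\wh{\lie{h}}$ in the Grassmannian of $\dim\wh{\lie{h}}$-dimensional subspaces of $\lie{g}$; since this stabilizer is Zariski closed and $\Ad$ is an algebraic morphism onto $\Ad(G)=(\Aut\lie{g})^0$, that set is a Zariski closed subgroup of $G$. As it contains the Zariski-dense subgroup $\Gamma$, it equals $G$, so $\wh{\lie{h}}$ is an ideal of $\lie{g}$. Since $\lie{g}$ is simple and $\wh{\lie{h}}\neq 0$, we conclude $\wh{\lie{h}}=\lie{g}$, i.e.\ $\wh{H}=G$.

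Finally, with $\wh{H}=G$ the equivariant map out of a covering group of $\wh{H}$ constructed in the proof of Theorem~\ref{Thm:Main} has as image the full $G$-orbit of $e\Gamma$, that is, all of $X$, so $X$ is a single leaf of $\mathscr{F}$. As noted in Lemma~\ref{Lem:Subalgebra} and in the proof of Theorem~\ref{Thm:Main}, the continuous plurisubharmonic exhaustion function $\rho$ of $X$ is constant along every leaf, hence constant on $X$; a bounded exhaustion function forces $X=\{\rho\le c\}$ to be compact, which is the desired conclusion. The step I expect to need the most care is the implication ``$\Gamma$ normalizes $\wh{H}$'' $\Rightarrow$ ``$\wh{\lie{h}}$ is an ideal of $\lie{g}$'': the cleanest route is to run the density argument inside the linear algebraic group $\Ad(G)$, using that $\Ad(\Gamma)$ is Zariski dense there, rather than presupposing an algebraic structure on $G$ itself.
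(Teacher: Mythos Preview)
Your proposal is correct and follows essentially the same line as the paper's proof: apply Theorem~\ref{Thm:Main} to obtain a positive-dimensional $\wh{H}$, use Proposition~\ref{Prop:normalizing} (with $H^*=\Gamma$ since $D=X$) to see that $\Gamma$ normalizes $\wh{H}$, invoke Zariski density to get $\wh{H}\lhd G$, and conclude $\wh{H}=G$ by simplicity. Your write-up is more detailed than the paper's---in particular you spell out via~\cite{BO} why $X$ cannot be Stein and you make the final compactness step explicit---whereas the paper simply asserts ``$X$ cannot be Stein'' and ``$\wh{\Gamma}=G$ proves the claim''; but the underlying argument is the same.
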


\begin{proof}
Suppose that $X=G/\Gamma$ is pseudoconvex. Since $X$ cannot be Stein, there is a
connected Lie subgroup $\wh{\Gamma}\subset G$ of positive dimension such that
$\wh{\Gamma}\cdot x_0$ is relatively compact in $X$. Since $\Gamma$ normalizes
$\wh{\Gamma}$ and is Zariski dense in $G$, we have $\wh{\Gamma}\lhd G$.  
Hence $\wh{\Gamma}=G$ which proves the claim.
\end{proof}

Let $p\colon D\to X$ be a pseudocovex domain spread over $X=G/H$ with $x_0\in
p(D)$. For later use we note the following technical

\begin{lem}\label{Lem:Intersection}
Let $\wt{G}$ be a connected closed complex subgroup of $G$ such that
$\wt{G}\cdot x_0$ is closed in $X$. Then every connected component of
$\wt{D}:=p^{-1}(\wt{G}\cdot x_0)$ is a pseudoconvex domain spread over
$\wt{G}\cdot x_0\simeq\wt{G}/(\wt{G}\cap H)$ and we have
\begin{equation*}
\wh{\wt{G}\cap H}=(\wt{G}\cap\wh{H})^0
\end{equation*}
where the left hand side is the connected subgroup of $\wt{G}$ corresponding to
$\wt{D}\to \wt{G}/(\wt{G}\cap H)$.
\end{lem}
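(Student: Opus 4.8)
The plan is to settle the geometric assertions first and then to reduce the identity of subgroups to an identity of Lie subalgebras which follows from the description of $\wh{\lie{h}}$ in terms of relatively compact integral curves.

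First I would record that $\wt{G}\cdot x_0$ is a closed complex submanifold of $X$ and that the orbit map induces a biholomorphism $\wt{G}/(\wt{G}\cap H)\to\wt{G}\cdot x_0$: the group $\wt{G}$ is closed and connected, the orbit is closed, hence locally closed, hence an embedded complex submanifold, and the isotropy of $\wt{G}$ at $x_0=eH$ is the closed complex subgroup $\wt{G}\cap H$, whose Lie algebra is $\wt{\lie{g}}\cap\lie{h}$. Since $p$ is locally biholomorphic, $\wt{D}=p^{-1}(\wt{G}\cdot x_0)$ is a closed complex submanifold of $D$; each connected component of $\wt{D}$ is open and closed in $\wt{D}$, hence closed in $D$, and the restriction of $p$ to it is locally biholomorphic onto $\wt{G}\cdot x_0$, i.e. it is a domain spread over $\wt{G}/(\wt{G}\cap H)$. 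If $\rho$ is a continuous plurisubharmonic exhaustion of $D$, its restriction to such a component $\wt{D}^0$ is again continuous plurisubharmonic, being the restriction of a plurisubharmonic function to a complex submanifold, and it is an exhaustion because $\{\rho\le c\}$ is compact in $D$ while $\wt{D}^0$ is closed in $D$; hence every component of $\wt{D}$ is pseudoconvex.

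Now fix the component $\wt{D}^0$ containing the base point $\wt{x}_0$ with $p(\wt{x}_0)=x_0$. By definition $\wh{\wt{G}\cap H}$ and $(\wt{G}\cap\wh{H})^0$ are the connected complex subgroups with Lie algebras $\wh{\wt{\lie{g}}\cap\lie{h}}$ — the subalgebra attached through \eqref{Eqn:Hhat} to $\wt{D}^0\to\wt{G}/(\wt{G}\cap H)$ — and $\wt{\lie{g}}\cap\wh{\lie{h}}$, so it suffices to prove $\wh{\wt{\lie{g}}\cap\lie{h}}=\wt{\lie{g}}\cap\wh{\lie{h}}$. The key input, which is contained in the proof of Lemma~\ref{Lem:Subalgebra} together with the Liouville principle for subharmonic functions on $\mbb{C}$ used in the proof of Theorem~\ref{Thm:Main}, is the following reformulation of \eqref{Eqn:Hhat}: for a pseudoconvex domain spread over a complex homogeneous space and an element $\xi$ of the acting Lie algebra, $\xi$ lies in the associated subalgebra if and only if the image of the complex integral curve of the lifted local flow of $\xi$ through the base point is relatively compact in the domain. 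I would apply this on both sides of the asserted identity.

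The comparison itself then goes as follows. For $\xi\in\wt{\lie{g}}$ the vector field $\xi_X$ is tangent to $\wt{G}\cdot x_0$, since its flow — left translation by $\exp(t\xi)$ — preserves the $\wt{G}$-orbit; hence the lift $\wt{\xi}_X$ is tangent to $\wt{D}$, its local flow preserves $\wt{D}$ and therefore the component $\wt{D}^0$, and on $\wt{D}^0$ it coincides with the lift of the fundamental vector field of $\xi$ for the $\wt{G}$-action on $\wt{G}/(\wt{G}\cap H)$. Consequently the integral curve through $\wt{x}_0$ is literally the same subset of $\wt{D}^0\subset D$, whether it is formed inside $D$ or inside $\wt{D}^0$, and since $\wt{D}^0$ is closed in $D$ it is relatively compact in $\wt{D}^0$ if and only if it is relatively compact in $D$. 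Thus for $\xi\in\wt{\lie{g}}$ one gets $\xi\in\wh{\wt{\lie{g}}\cap\lie{h}}$ iff that curve is relatively compact in $\wt{D}^0$ iff it is relatively compact in $D$ iff $\xi\in\wh{\lie{h}}$, which is the desired equality of Lie subalgebras and hence the claimed identity of connected subgroups. The only point that really needs care is this last comparison — that $\wt{\xi}_X$ is genuinely tangent to $\wt{D}$, that the components of $\wt{D}$ are closed in $D$, and that the two local flows agree; the plurisubharmonicity and exhaustion bookkeeping is routine, and the integral-curve reformulation of $\wh{\lie{h}}$ needs no analysis beyond what is quoted above.
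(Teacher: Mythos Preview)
Your proof is correct and follows essentially the same approach as the paper's: both reduce the identity of subgroups to the Lie algebra identity $\wh{\wt{\lie{g}}\cap\lie{h}}=\wt{\lie{g}}\cap\wh{\lie{h}}$ and establish it via the characterization of $\wh{\lie{h}}$ through relatively compact integral curves, using that $\wt{D}^0$ is closed in $D$ to transfer relative compactness back and forth. Your write-up is more explicit about the geometric setup and about invoking the integral-curve reformulation for both inclusions, whereas the paper compresses one inclusion into a terse ``hence contains'' and phrases the other via inner integral curves, but the underlying argument is the same.
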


\begin{proof}
Since $\wt{D}$ is a closed complex submanifold of $D$, all of its connected
components are pseudoconvex domains spread over $\wt{G}\cdot x_0$ by the map
$\wt{p}:=p|_{\wt{D}}$. By definition, the Lie algebra of $\wh{\wt{G}\cap H}$ is
\begin{equation*}
\wh{\wt{\lie{g}}\cap\lie{h}}:=\bigl\{\xi\in\wt{\lie{g}};\
\wt{\xi}_X\varphi(x_0)=0\text{ for every continuous plurisubharmonic function
$\varphi$ on $\wt{D}$}\bigr\},
\end{equation*}
hence contains $\wt{\lie{g}}\cap\wh{\lie{h}}$. Conversely, every element of
$\wh{\wt{\lie{g}}\cap\lie{h}}$ induces an inner integral curve in $\wt{D}$, thus
also in $D$ since $\wt{D}$ is closed in $D$. This implies $\wh{\wt{\lie{g}}\cap
\lie{h}}=\wt{\lie{g}}\cap\wh{\lie{h}}$ as was to be shown.
\end{proof}

\section{A characterization of holomorphic convexity}\label{Section:holconv}

If a pseudoconvex non-Stein domain $D$ is spread over $X$, in general, the leaf
$F_{x_0}$ is not closed in $D$. In this section we investigate exactly when this
happens.

\begin{thm} \label{cldwhH}
Suppose $D$ is a pseudoconvex domain spread over the complex homogeneous
manifold $G/H$. Then the complex group  $H^*\wh{H}$ constructed in
Proposition~\ref{Prop:normalizing} is closed in $G$ if and only if $D$ is
holomorphically convex.

Moreover, when these conditions hold, the Remmert reduction of $D$ is a
holomorphic fiber bundle $\wh{\pi}\colon D\to D_0$ that is induced by the bundle
$\pi\colon G/H\to G/H^*\wh{H}$. The fiber of $\wh{\pi}$ is compact and is
biholomorphic to $H^*\wh{H}/H_1$, where $H_1$ is a subgroup of $H^*$ having
finite index. Its base $D_0$ is a Stein domain spread over the homogeneous
manifold $G/H^*\wh{H}$.
\end{thm}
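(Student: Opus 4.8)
The plan is to prove the two implications of the equivalence separately and then, under the stated hypothesis, to identify the Remmert reduction explicitly. First I would treat the easy direction: if $D$ is holomorphically convex, then its Remmert reduction $\wh{\pi}\colon D\to D_0$ has compact connected fibers and $D_0$ is Stein. Since the foliation $\mathscr{F}$ of Theorem~\ref{Thm:Main} has relatively compact leaves on which every continuous plurisubharmonic function (in particular every holomorphic function) is constant, each leaf $F_x$ is contained in a fiber of $\wh{\pi}$; conversely the fibers of $\wh{\pi}$ are contained in leaves because $\wh{\lie h}=\lie h$ would force $D$ Stein, so the positive-dimensional compact fibers must be saturated by inner integral curves. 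Thus the leaves of $\mathscr{F}$ are exactly the fibers of $\wh{\pi}$, hence closed, hence compact. Working over the covering $X^*=G/H^*$ furnished by Proposition~\ref{Prop:normalizing}, $F_{x_0}$ closed and compact translates (via the equivariant map $\wt H\to D$ with image $F_{x_0}$) into the orbit $H^*\wh H\cdot eH^*$ being closed in $G/H^*$, which is equivalent to $H^*\wh H$ being closed in $G$.

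For the converse, assume $H^*\wh H$ is closed in $G$. Then $\pi\colon G/H^*\to G/H^*\wh H$ is a holomorphic fiber bundle with compact fiber $H^*\wh H/H^*$ (compact because $\wh H\cdot eH^*$ is relatively compact in $G/H^*$, being the image of the relatively compact leaf $F_{x_0}$, and now closed). Pulling this bundle back along $p^*\colon D\to G/H^*$ one obtains $\wh{\pi}\colon D\to D_0$ where $D_0$ is the image of $D$ in $G/H^*\wh H$; its fibers are the leaves $F_x$, which are compact. One then checks $D_0$ is a domain spread over $G/H^*\wh H$ and that it is Stein: a continuous plurisubharmonic exhaustion of $D$ is constant along the (compact) fibers of $\wh\pi$, hence descends to a continuous plurisubharmonic exhaustion of $D_0$, and by construction of $\wh{\lie h}$ the group $\wh{H^*}=\wh H$ associated to $D_0$ over $G/H^*\wh H$ is trivial, so $D_0$ is Stein by Theorem~\ref{Thm:Hirsch}(1). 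Since $D$ is then a holomorphic fiber bundle with compact fiber over the Stein manifold $D_0$, it is holomorphically convex, and $\wh{\pi}$ must be its Remmert reduction (the fibers being the maximal compact connected analytic subsets through their points, as they are exactly the leaves). Finally, the fiber of $\wh\pi$ is the connected component through $x_0$ of $p^{-1}(\text{fiber of }\pi)$, which is biholomorphic to $H^*\wh H/H_1$ for $H_1:=\{h\in H^*\mid h\cdot\wt x_0\in F_{x_0}\text{-component}\}$, a subgroup of $H^*$ of finite index by the finiteness statement in Theorem~\ref{Thm:Main}(3) applied to the covering $p$ restricted to a leaf.

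I expect the main obstacle to be the bookkeeping that identifies the leaves of $\mathscr{F}$ with the fibers of the bundle $\pi\colon G/H^*\to G/H^*\wh H$ pulled back to $D$, and in particular verifying that $\wh\pi$ really is the \emph{Remmert} reduction rather than merely a proper map with connected fibers onto a Stein base: one must argue that no further contraction is possible, i.e.\ that $D_0$ carries no nonconstant bounded-below plurisubharmonic-vanishing directions, which is exactly the statement $\wh{H^*}=\wh H$ computed relative to the base, and this uses the distributional derivative definition~\eqref{Eqn:Hhat} together with the fact that plurisubharmonic functions on $D_0$ pull back to ones on $D$ that are already constant on leaves. The finite-index claim for $H_1$ and the precise description of the fiber as $H^*\wh H/H_1$ is then a matter of unwinding Proposition~\ref{Prop:normalizing}.
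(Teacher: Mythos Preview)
Your overall strategy matches the paper's, but there is a genuine gap in your construction of $D_0$ for the direction ``$H^*\wh H$ closed $\Rightarrow D$ holomorphically convex''. You write that $D_0$ is ``the image of $D$ in $G/H^*\wh H$'' under $\pi\circ p^*$. This is correct when $D$ is schlicht in $G/H^*$ (and the paper in fact treats that case first, exactly as you do), but in the spread case $p^*$ is only locally biholomorphic, so distinct leaves of $\mathscr F$ can map to the same point of $G/H^*\wh H$; the fibers of $\pi\circ p^*$ are then disconnected unions of leaves, and the image in $G/H^*\wh H$ does not carry the correct base structure. The paper resolves this by defining $D_0:=D/\mathscr F$ directly as the leaf space: once $H^*\wh H$ is closed the leaves are compact, and Holmann's result~\cite{Hol78} endows $D/\mathscr F$ with a canonical complex structure provided it is Hausdorff; the paper checks Hausdorffness by hand, producing for each leaf a saturated open neighborhood using the compact-fiber bundle $G/H^*\to G/H^*\wh H$ locally. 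The resulting $D_0$ is then a domain \emph{spread} over $G/H^*\wh H$ (in general not schlicht), and from there your argument that it is Stein, via the descended plurisubharmonic exhaustion together with maximality of $\wh H$, goes through.

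In the converse direction your justification that Remmert fibers coincide with leaves (``the positive-dimensional compact fibers must be saturated by inner integral curves'') is incomplete: saturation by inner integral curves shows at most that each Remmert fiber is a union of leaves, not a single leaf, and one must still exclude a non-closed leaf winding inside a higher-dimensional compact Remmert fiber. The paper is also terse here, asserting that the identification is ``clear from the construction given in Theorem~\ref{Thm:Main}''. A cleaner way to close the gap is to observe that the local $G$-action descends to the Stein Remmert quotient $D_1$, so that $D_1$ is itself infinitesimally homogeneous with associated algebra equal to the isotropy (by Steinness and Theorem~\ref{Thm:Hirsch}), and then trace this back to conclude that the $\sigma$-fibers are exactly the $H^*\wh H$-orbits, hence the leaves.
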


\begin{proof}
Since $D$ is biholomorphic to a connected component of $D\times_XG/H^*$ we may
assume $H=H^*$ throughout the proof in order to simplify the notation. Notice
that $H\wh{H}$ is closed in $G$ if and only if the leaves of $\mathscr{F}$ are
compact.

We first consider what happens when $D \subset X$ is a domain in the homogeneous
manifold with the group $H\wh{H}$ closed. In this case the fibration $\pi\colon
G/H\to G/H\wh{H}$ induces the foliation $\mathscr{F}$ of $D$ that we constructed
in Theorem \ref{Thm:Main}. Since the plurisubharmonic exhaustion function on $D$
is constant on the compact $\wh{H}$-orbits in $D$, it descends to $\pi(D)\subset
G/H\wh{H}$. Hence, $D_0:=\pi(D)$ is pseudoconvex and by the maximality of
$\wh{H}$ a Stein domain. Moreover, we have $H_1=H$ in this case.

In order to be able to repeat this argument in the general case, we need to find
a domain $D_0$ spread over $G/H\wh{H}$ such that the diagram
\begin{equation*}
\xymatrix{
D\ar[r]^p\ar[d] & G/H\ar[d]\\
D_0\ar[r]_{p_0} & G/H\wh{H}
}
\end{equation*}
commutes. The idea is to define $D_0:=D/\mathscr{F}$. Since the leaves of the
foliation $\mathscr{F}$ are compact, by~\cite[Proposition~6.2]{Hol78} the leaf
space $D/\mathscr{F}$ carries a canonical complex structure as soon as it is
Hausdorff.

In order to see that $D/\mathscr{F}$ is indeed Hausdorff let $F_x$ be the leaf
through $x\in D$ and let $U$ be any open neighborhood of it. We must show that
$U$ contains a saturated open neighborhood of $F_x$. Since the domain $p(D)$ is
foliated by the images $p(F_x)$, $F_x\in\mathscr{F}$, and since this foliation
is induced by the fiber bundle $G/H\to G/H\wh{H}$ having compact fibers we find
a saturated open neighborhood $V$ of $p(F_x)$ inside $p(U)$, such that the
connected component $W$ of $p^{-1}(V)$ containing $F_x$ lies in $U$ and covers
$V$. Now $W$ is a saturated open neighborhood of $F_x$.

Having established the existence of the commutative diagram it follows that
$D_0$ is Stein in exactly the same way as above. Consequently, the quotient map
$D\to D/\mathscr{F}=D_0$ is the Remmert reduction of $D$ whose fibers are the
leaves. Since $F_{x_0}$ is connected, we have $F_{x_0}\simeq(H\wh{H})/H_1$ where
$H_1$ is the smallest open subgroup of $H$ such that $H\wh{H}=H_1\wh{H}$ holds.

Conversely, if $D$ is holomorphically convex, then it has a Remmert reduction, 
i.e., there exists a holomorphic map $\sigma\colon D \to D_1$ that has compact 
connected fibers and the target space $D_1$ is Stein. Since $D$ is pseudoconvex,
it admits a plurisubharmonic exhaustion. This function is clearly constant on
the fibers of the map $\sigma$. It is clear from the construction given in
Theorem~\ref{Thm:Main} that the foliation given by the subgroup $\wh{H}$ gives
the same partition of $D$ as is given by the fibers of the map $\sigma$. The
fact that $D$ is the total space of a holomorphic fiber bundle follows from our
considerations above and this observation.
\end{proof}

\section{Domains spread over projective orbits}

In this section we again consider $p\colon D\to X$ which is a pseudoconvex
domain spread over a homogeneous manifold $X=G/H$, as in Theorem~\ref{Thm:Main},
but now assume that $X$ is an orbit of the connected complex linear group $G$ in
some projective space $\mathbb P_N$. One is in an algebraic setting, if $X$ is
compact and thus a flag manifold. In very stark contrast to this, if $X$ is not
compact, then $X$ need not be closed or even locally closed in $\mathbb P_N$ and
the setting is not algebraic. Nonetheless, there are specific facts at hand
concerning the {\it holomorphic actions} of complex groups in the projective
case that allow us to prove the next result.

\begin{thm}\label{projcase}   
Let $X=G/H$ be an orbit of a connected complex Lie group $G$ acting
holomorphically on some projective space $\mathbb P_N$. Then any pseudoconvex
domain $D$ spread over $X$ is holomorphically convex. Moreover, the fibers of
the Remmert reduction of $D$ given in Theorem~\ref{cldwhH} are homogeneous
rational manifolds that are biholomorphic to $H\wh{H}/H$.    
\end{thm}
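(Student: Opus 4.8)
The plan is to reduce Theorem~\ref{projcase} to the structural results already established in Sections~3 and~4. By Proposition~\ref{Prop:normalizing} we may pass to a covering and assume that $H$ normalizes $\wh H$, so that $H\wh H$ is a subgroup of $G$. In view of Theorem~\ref{cldwhH}, everything comes down to proving two things: first, that $H\wh H$ is closed in $G$ (which gives holomorphic convexity and exhibits the Remmert reduction as a fiber bundle with fiber $H\wh H/H_1$, $H_1$ of finite index in $H$); and second, that the fiber, which is then a compact complex homogeneous manifold homogeneous under a covering of $\wh H$, is actually a rational homogeneous manifold biholomorphic to $H\wh H/H$. The whole point of the projective hypothesis is to force these two conclusions, which fail for general $X$ as the examples in the text show.

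For closedness of $H\wh H$ I would exploit the linear-algebraic structure available in the projective setting. Since $G$ acts holomorphically on $\mathbb P_N$, by a theorem of Tits (and as used implicitly here) $G$ can be taken to be an algebraic group, or at least its action extends to one on $\mathbb P_N$ with algebraic orbits in the Zariski closure; the key structural input is that for a complex \emph{linear} group, an orbit $\wh H\cdot x_0$ which is relatively compact must in fact be closed, because a relatively compact orbit of a linear group in projective space cannot have boundary points in its own closure (the boundary would consist of lower-dimensional orbits, but relative compactness together with the Borel fixed-point/limit argument forces the orbit to be compact). More precisely: $\wh H\cdot x_0$ is relatively compact in $X$ by Theorem~\ref{Thm:Main}, hence relatively compact in $\mathbb P_N$; its closure $\ol{\wh H\cdot x_0}$ is a compact $\wh H$-invariant analytic subset of $\mathbb P_N$, and by Chow it is projective algebraic, on which the algebraic group $\wh H^{\mathrm{alg}}$ acts with a unique closed orbit which must be everything of minimal dimension — a standard argument then shows $\wh H\cdot x_0$ is already compact. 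Compactness of the leaf $F_{x_0}=\wh H\cdot x_0$ is exactly the condition that $H\wh H$ is closed, by the remark at the start of the proof of Theorem~\ref{cldwhH}.

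For the identification of the fiber I would argue that the compact complex homogeneous manifold $\ol{\wh H\cdot x_0}\subset\mathbb P_N$, being a compact orbit of a linear complex group in projective space, is a flag manifold: this is the classical fact (Borel–Remmert, Tits) that a compact homogeneous submanifold of $\mathbb P_N$ homogeneous under a connected complex linear group is $G'/P'$ with $G'$ semisimple and $P'$ parabolic, hence rational. Then $F_{x_0}\simeq H\wh H/H_1$ is a finite quotient of this flag manifold by a group acting freely; but flag manifolds are simply connected, so any such covering is trivial, forcing $H_1=H$ and $F_{x_0}\simeq H\wh H/H$, which is the rational homogeneous manifold claimed. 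The main obstacle is the first half — establishing closedness of $H\wh H$, equivalently compactness of the leaf — since this is precisely where the non-algebraic, merely holomorphic nature of the $G$-action on $\mathbb P_N$ (when $X$ is not locally closed) has to be controlled; the argument must extract a genuine algebraic/projective structure on the closure of the relatively compact leaf and cannot use algebraicity of $X$ itself. Once compactness of the leaf is in hand, Theorem~\ref{cldwhH} and the structure theory of compact homogeneous submanifolds of projective space finish the proof routinely.
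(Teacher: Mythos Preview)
Your reduction to Theorem~\ref{cldwhH} and the treatment of the fiber once compactness is known are both fine: a compact complex homogeneous manifold realized in projective space is indeed a flag manifold (Borel--Remmert/Tits), and simple connectivity forces $H_1=H$. The genuine gap is in your compactness argument for the leaf $\wh H\cdot x_0$.

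You assert that the closure $\ol{\wh H\cdot x_0}$ in $\mbb P_N$ is a compact analytic subset, then invoke Chow and the closed-orbit lemma for algebraic group actions. But $\wh H$ is, a priori, only an immersed (not closed) connected complex Lie subgroup of $G$; its orbit $\wh H\cdot x_0$ is an \emph{immersed} submanifold of $X$, not an embedded one, and the topological closure of an immersed complex submanifold need not be analytic at all (think of a dense complex one-parameter subgroup winding through a compact torus sitting in some $\mbb P_N$). So Chow gives you nothing here. Your proposed fix, passing to the Zariski closure $\wh H^{\mathrm{alg}}$, breaks the argument from the other side: the $\wh H^{\mathrm{alg}}$-orbit through $x_0$ can be strictly larger than $\wh H\cdot x_0$ and there is no reason it remains relatively compact in $X$---relative compactness of the $\wh H$-orbit is exactly what is at stake and is not inherited by the algebraic hull. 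Likewise, ``$G$ can be taken to be algebraic'' is not available: as the paper itself emphasizes just before the statement, the orbit $X$ need not even be locally closed in $\mbb P_N$, so the setting is genuinely non-algebraic.

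The paper's proof circumvents this by never trying to analyze $\ol{\wh H\cdot x_0}$ directly. Instead it uses Chevalley's theorem that $G'=\ol G'$ is algebraic to show the $G'$-orbits in $X$ are closed, hence $\wh H\cdot x_0$ lies in a single (closed, quasi-projective) $G'$-orbit; iterating the commutator fibration one reduces to $G=G'$ algebraic and $H$ algebraic (hence connected up to a finite cover). Only then, with $H\subset\wh H$, does one attack closedness of $\wh H$, and this is done by a Lie-algebraic argument: form the chain $\wh H=G_0\lhd G_1\lhd\dotsb\lhd G_n=G$ with $\lie g_j=\lie l_j+i\lie l_j$ and $\lie g_{j-1}=\lie l_j\cap i\lie l_j$ (where $L_j$ is the real closure of $G_{j-1}$), and use $\lie g=\lie g'$ to force the chain to collapse, yielding $\wh{\lie h}=\lie g$ and hence $\wh H$ algebraic and closed. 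The projective hypothesis enters only through Chevalley's theorem and the algebraicity of isotropy in the reduced situation, not through any Chow-type argument on the leaf itself.
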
  

\begin{proof}  
If $D$ itself is Stein, then $\wh{H}=H$ and there is nothing to prove in this
case. So we assume throughout the rest of the proof that $D$ is not Stein.

The first step of the proof consists in reducing the general situation to an
algebraic one. Denoting $\ol{G}$ the algebraic Zariski closure of the image of
$G$ in ${\rm{PGL}}(N+1,\mbb{C})$ and $G'$ its commutator subgroup, we have
$G'=\ol{G}'$, and in particular $G'$ is algebraic (for a proof of this result of
Chevalley see~\cite[Corollary~II.7.9]{Bo}). Consequently, the boundary of every
$G'$-orbit in $X$ consists of $G'$-orbits of strictly smaller dimension. Since
$G'$ is a normal subgroup of $G$, this implies that every $G'$-orbit is closed
in $X$, and in particular, $G'H$ is a closed subgroup of $G$.

We claim that the relatively compact orbit $\wh{H}\cdot y_0$ is contained in the
neutral fiber of $G/H\to G/(G'H)$. To see this, we will repeat an argument
from~\cite[p.~173]{HuOe}. Since $G\cap(\ol{G}_{y_0}\ol{G}')=G\cap(\ol{G}_{y_0}
G')=HG'$, the Abelian algebraic group  $\ol{G}/(\ol{G}_{y_0}\ol{G}')\simeq
\mbb{C}^k\times(\mbb{C}^*)^l$ contains $G/(HG')$ as a $G$-orbit. As a
consequence, the fiber bundle $G/H\to G/(HG')$ has holomorphically separable
base which proves the claim. Moreover, since $G'$ acts transitively on this
fiber, we have $(HG')/H\simeq G'/(H\cap G')$ and $\dim\wh{H}\cdot
y_0=\dim\wh{H}_1\cdot y_0$ where $\wh{H}_1:=\wh{H}\cap G'$ due to
Lemma~\ref{Lem:Intersection}. Note that as a fiber $G'/(H\cap G')$ is closed in
$G/H$. Thus $\wh{H}_1\cdot y_0$ is still relatively compact in the
quasi-projective variety $G'/(H\cap G')$. Therefore we may replace $G$, $H$ and
$\wh{H}$ by $G_1:=G'$, $H_1:=G'\cap H=G'_{y_0}$ and $\wh{H}_1$ respectively.
Now we can iterate this procedure, thus replacing $G_1$ by $G_2:=G_1'$, and so on. 
As above we keep the orbit $\wh{H}\cdot y_0$ as a relatively compact subset in
$G_2/H_2$. Therefore this iteration must terminate after finitely many steps and
we end up with an algebraic group $G_k$ such that $G_k=G_k'$. Consequently, we
may assume without loss of generality that $X=G/H$ is a quasi-projective variety
containing a relatively compact orbit $\wh{H}\cdot y_0$ and that $G=G'$.

Since $H$ is an algebraic subgroup of $G$, the map $G/H^0\to G/H$ is a finite
covering. Thus there exists a finite proper map between each connected component
of $D\times_X(G/H^0)$ to $D$, which implies that $D$ is pseudoconvex or
holomorphically convex if and only if each component of $D\times_X(G/H^0)$ has
this property. Therefore  we may assume that $H$ is connected. In particular
this implies $H\subset\wh{H}$. Hence it suffices to show that $\wh{H}$ is closed
in $G$. If this is not the case, let $L_1$ be the topological closure of
$\wh{H}$ and let $G_1$ be the connected Lie subgroup of $G$ having Lie algebra
$\lie{l}_1+i\lie{l}_1$. It follows that $\wh{\lie{h}}=\lie{l}_1\cap i\lie{l}_1$,
therefore $\wh{\lie{h}}\lhd\lie{g}_1$. We iterate this procedure until we arrive
at a group $G_n$ which is closed in $G$. Then we have $H\subset\wh{H}\subset
G_n$ and $G_n/H$ closed in $G/H$, so that we can apply the sequence of
commutator fibrations to $G_n/H$. Again these two reduction procedures must
terminate after finitely many steps. Hence replacing $G$ by the group $G_n$
finally obtained we are in the situation that $H\subset\wh{H}\subset G=G'$, that
$X=G/H$ is quasi-projective and that there is a sequence of connected complex
Lie subgroups
\begin{equation*}
\wh{H}=:G_0\lhd G_1\lhd  \dotsb \lhd G_n=G
\end{equation*}
such that $\lie{g}_j=\lie{l}_j+i\lie{l}_j$ and $\lie{g}_{j-1}=\lie{l}_j\cap
i\lie{l}_j$ where $L_j$ is the topological closure of $G_{j-1}$ for all
$j=1,\dotsc,n$.

Now a purely algebraic argument gives
\begin{equation*}
\lie{l}_n\supset [\lie{g},\lie{l}_n]=\lie{l}_n'+i\lie{l}_n'=\lie{g}'=\lie{g}.
\end{equation*}
Consequently we obtain $\lie{g}=\lie{l}_n$, hence $\lie{g}=\lie{g}_{n-1}$.
Repeating this, we see $\wh{\lie{h}}=\lie{g}$, so that $\wh{H}$ is an algebraic
subgroup of $G$ and in particular closed as was to be shown.
\end{proof}

\begin{rem}
Let $H\subset G$ be linear algebraic groups and $X=G/H$. The proof of
Theorem~\ref{projcase} shows that if there is a non-Stein pseudoconvex domain
$p\colon D\to X$ spread over $X$ with $eH\in p(D)$, then the group $\wh{H}$
constructed in Theorem~\ref{Thm:Main} is likewise an algebraic subgroup of $G$.
\end{rem}

In passing, we also note what happens in the case of complex orbits of real groups
acting holomorphically on projective space.

\begin{cor} 
Let $G_\mbb{R}$ be a real subgroup of ${\rm{PSL}}(N+1,\mathbb C)$ that is acting
holomorphically and effectively on $\mathbb P_N$. Let $X := G_\mbb{R}\cdot x$ be
a complex orbit of some point $x\in\mathbb P_N$. Then any pseudoconvex domain
spread over $X$ is holomorphically convex.
\end{cor}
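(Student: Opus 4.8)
The plan is to reduce the corollary about a real group $G_{\mbb R}\subset {\rm PSL}(N+1,\mbb C)$ to the complex case already handled in Theorem~\ref{projcase}, exploiting the fact that the orbit $X=G_{\mbb R}\cdot x$ is a \emph{complex} submanifold of $\mbb P_N$. First I would pass to the complexification: let $G$ be the smallest complex Lie subgroup of ${\rm PSL}(N+1,\mbb C)$ whose Lie algebra is $\lie g_{\mbb R}+i\lie g_{\mbb R}$ (equivalently, the complex algebraic subgroup generated by $G_{\mbb R}$, or at least its identity component). Because $X$ is a complex submanifold and $G_{\mbb R}$ acts holomorphically, the real vector fields coming from $\lie g_{\mbb R}$ are tangent to $X$ and holomorphic on a neighborhood, so their imaginary counterparts $i\xi_X$ are again tangent to $X$; hence the larger complex group $G$ still leaves $X$ invariant and acts transitively on it. Thus $X=G/H$ with $H=G_x$, and $X$ is now an orbit of the connected complex linear group $G^0$ on $\mbb P_N$, i.e.\ exactly the situation of Theorem~\ref{projcase}.

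Next I would apply Theorem~\ref{projcase} directly to this realization $X=G^0/H^0$: any pseudoconvex domain $p\colon D\to X$ spread over $X$ is holomorphically convex, with Remmert reduction a holomorphic fiber bundle over a Stein base and with fibers homogeneous rational manifolds biholomorphic to $\wh H H/H$. The only subtlety is that the hypothesis of Theorem~\ref{projcase} is phrased for a \emph{connected} complex group acting on $\mbb P_N$, so I would note that replacing $G$ by its identity component $G^0$ is harmless: $G^0$ still acts transitively on the connected manifold $X$, so $X=G^0/(G^0\cap H)$ is a $G^0$-orbit, and a domain spread over this orbit is the same thing as a domain spread over $X$. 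Since pseudoconvexity of $D$ is an intrinsic property of $D$ (existence of a continuous plurisubharmonic exhaustion) and does not depend on which group presentation of $X$ we use, Theorem~\ref{projcase} applies verbatim and yields the conclusion.

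The main obstacle — and the point deserving the most care — is the first step: verifying that the complex group $G$ generated by $G_{\mbb R}$ genuinely acts on $X$, i.e.\ that $X$ is invariant under the (a priori only local) flows of the vector fields $i\xi_X$ for $\xi\in\lie g_{\mbb R}$. This is where the hypothesis that $X$ is a \emph{complex} orbit is essential: on a complex submanifold, a holomorphic vector field tangent to it has its ``$\times i$'' also tangent, and the corresponding one-parameter group of the complexified Lie algebra integrates to a holomorphic transformation preserving $X$ by a standard analytic-continuation / uniqueness argument. Once this invariance is in hand, the rest is immediate from Theorem~\ref{projcase}, so I would keep the write-up short, essentially: ``Since $X$ is a complex orbit, it is also an orbit of the complex group generated by $G_{\mbb R}$ in ${\rm PSL}(N+1,\mbb C)$, and the claim follows from Theorem~\ref{projcase}.''
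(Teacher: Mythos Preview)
Your overall strategy---pass to the complex group $G$ with Lie algebra $\lie g_{\mbb R}+i\lie g_{\mbb R}$ and invoke Theorem~\ref{projcase}---is exactly the paper's. But the step you flag as ``the main obstacle'' really is one, and your proposed resolution is wrong: the complex group $G$ does \emph{not} in general leave $X$ invariant. Take $G_{\mbb R}={\rm SU}(n,1)$ acting on $\mbb P_n$; its open orbit is the unit ball, a complex submanifold, yet $\lie g_{\mbb R}+i\lie g_{\mbb R}=\lie{sl}(n+1,\mbb C)$ and $G={\rm PSL}(n+1,\mbb C)$ acts transitively on all of $\mbb P_n$. Tangency of the $i\xi$-vector fields along $X$ only says the flow stays in $X$ for small time; there is no analytic-continuation argument forcing global invariance when $X$ is not closed in $G\cdot x$.

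The paper sidesteps this entirely: one does not need $G$ to preserve $X$. Since the $\lie g$-vector fields are tangent to $X$ and span its tangent space at every point, $X$ is \emph{open} in the $G$-orbit $G\cdot x$. That is all you need: if $p\colon D\to X$ is a domain spread over $X$, then composing with the open inclusion $X\hookrightarrow G\cdot x$ gives a domain spread over the $G$-orbit $G\cdot x$, to which Theorem~\ref{projcase} applies directly. So replace ``$G$ leaves $X$ invariant and acts transitively on it'' by ``$X$ is open in $G\cdot x$, hence any domain spread over $X$ is also spread over $G\cdot x$'', and the proof goes through.
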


\begin{proof}
Let $G$ denote the smallest connected complex subgroup of ${\rm{PSL}}
(N+1,\mathbb C)$ that contains $G_\mbb{R}$. Then $X$ is open in $G\cdot x$.
Hence any domain spread over $G_\mbb{R}\cdot x$ is also a domain spread over
$G\cdot x$. The result now follows from Theorem~\ref{projcase}.
\end{proof}

For a general complex homogeneous manifold $X=G/H$ we have the normalizer
fibration $X=G/H\to G/\mathscr{N}_G(H^0)$ whose base is an orbit of the linear
Lie group $\Ad(G)$ in a projective space. However, even if $G/H$ is
pseudoconvex, in general $G/\mathscr{N}_G(H^0)$ does not have to be.  
    
The following examples show that one cannot control how $\wh{H}$ is related to
$\mathscr{N}_G(H^0)$.

\begin{ex}
Let $G={\rm{SL}}(3,\mbb{C})$ with Borel $B$ and a maximal parabolic subgroup
$P$. Then $G/B\to G/P$ is a $\mbb{P}_1$-bundle over $\mbb{P}_2$. Taking the
inverse image of e.g. the unit ball in $\mbb{P}_2$, we obtain a pseudoconvex
non-Stein domain in $G/B$ such that $\wh{B}=P$ which is not contained in
$\mathscr{N}_G(B)=B$.
\end{ex}

\begin{ex}
Let $S:={\rm{SL}}(3,\mathbb C)$ and $B$ be a standard Borel in $S$. We denote by
$T$ a maximal algebraic torus in $S$ and take a holomorphic proper injection of
$\mbb{C}$ into $T$ as a closed subgroup $A_1$ such that the quotient $T/A_1=:E$
is an elliptic curve. Set $H_1=A_1\ltimes U$, where $U$ denotes the unipotent
radical of $B$. Then one has the homogeneous fibration $S/H_1\to S/B$ with
compact fiber $E$. Suppose $D_1\subset S/H_1$ is a pseudoconvex domain that is not
Stein. Then $\wh{H}_1=B=\mathscr{N}_S(H_1)$ and ${\mathscr O}(S/H_1)
\simeq\mbb{C}$.
\end{ex}

\begin{ex}
With the same set up as in the previous example we now take $A_2$ to be the
closed image of a representation of $\mathbb Z$ into $T$ such that $T/A_2$ is a
Cousin group $C$ and set $H_2 := A_2 \ltimes U$. Then $S/H_2 \to S/B$ is a
homogeneous fibration with the Cousin group as fiber. Let $D_2 \subset S/H_2$ be
a pseudoconvex domain that is not Stein. Then $\wh{H}_2=\mbb{C}\ltimes U$, where
the image of $\mbb{C}$ is one of the complex leaves of the foliation of $C$,  
while $\mathscr{N}_S(H_2^0)=B$.
\end{ex}

\section{Pushing down plurisubharmonic functions}

In this section we prove the following technical result.

\begin{lem}\label{Lem:Kiselman}
Let $p\colon X\to Y$ be a holomorphic fiber bundle of complex manifolds,   
where $X$ is pseudoconvex.   
Then $Y$ is also pseudoconvex if either of the following holds:  
\begin{enumerate}  
\item the fiber is  a Cousin group $C$  
\item the bundle is a principal $(\mbb{C}^{*})^{k}$--bundle.  
\end{enumerate}   
\end{lem}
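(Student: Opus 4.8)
The plan is to push a continuous plurisubharmonic exhaustion function $\varphi$ on $X$ down to $Y$ by averaging over the fibers. In both cases the fiber carries a natural compact group action whose orbits exhaust (or coincide with) the fiber up to a well-understood quotient, and the idea is that integrating $\varphi$ against the Haar measure of this compact group, fiberwise, produces a function on $Y$ that is still an exhaustion and still plurisubharmonic. Concretely, in case (1) a Cousin group $C$ is $(\mbb C^n/\Gamma)$ with maximal compact subgroup $K:=V/\Gamma$ (notation as in the Cousin group example in Section~3), and the quotient $C/K$ is a complex vector space $\mbb C^n/V$; after replacing the structure group by $K$ (the bundle $X\to Y$ reduces, since the transition functions can be taken in $\Aut(C)$ and one projects to the compact part acting by translations), one forms the partial Stein-type reduction in the $C/K\cong\mbb C^{n-?}$ directions. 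In case (2), a principal $(\mbb C^*)^k$-bundle has the compact torus $(S^1)^k\subset(\mbb C^*)^k$ acting, and $(\mbb C^*)^k/(S^1)^k\cong\mbb R^k$; here the standard trick is Kiselman's minimum principle: the fiberwise infimum of a plurisubharmonic function that is invariant under the compact torus and convex in the $\mbb R^k$-direction descends to a plurisubharmonic function downstairs.

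The key steps, in order, would be as follows. First, reduce to the case where the structure group of the bundle is the relevant compact group (respectively $K$ for the Cousin fiber, $(S^1)^k$ for the torus case) times the ``linear part''; this uses that $\Aut(C)$ has $K$ as a normal subgroup with vector-group quotient, and that a principal $(\mbb C^*)^k$-bundle always has the associated principal $\mbb R^k$-bundle which is topologically trivial, hence the $(\mbb C^*)^k$-bundle is the complexification of an $(S^1)^k$-bundle. Second, average $\varphi$ over the compact group $K$ (resp.\ $(S^1)^k$) acting fiberwise: $\varphi^\#(x):=\int_K \varphi(k\cdot x)\,dk$. Averaging a plurisubharmonic function over a holomorphic compact group action keeps it plurisubharmonic and continuous, and keeps it an exhaustion because $K$ is compact. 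Third, $\varphi^\#$ is now invariant under the compact part, so on each fiber it descends to a function on the non-compact quotient $C/K\cong\mbb C^m$ (resp.\ $\mbb R^k$) that is plurisubharmonic there and, crucially, convex along the real directions $\mbb R^k$ in the torus case. Fourth, apply Kiselman's minimum principle (in the form that will be proved in Section~6, as the section title ``Pushing down plurisubharmonic functions'' and the reference in the introduction to ``a generalization of Kiselman's minimum principle'' suggest): the fiberwise infimum $\psi(y):=\inf_{x\in p^{-1}(y)}\varphi^\#(x)$ is plurisubharmonic on $Y$; it is still an exhaustion because the sublevel sets of $\varphi^\#$ upstairs are ``tube-like'' over relatively compact sets and the infimum over a fiber is attained on a relatively compact set (in the Cousin case, since the non-compact quotient direction is genuinely $\mbb C^m$ and $\varphi^\#$ restricted there, being psh and bounded below by an exhaustion estimate, forces properness of the descended function).

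The main obstacle I expect is twofold. The more technical issue is establishing that the infimum $\psi$ is indeed an \emph{exhaustion} on $Y$ — plurisubharmonicity of a fiberwise infimum is the content of Kiselman's principle, but exhaustion requires a uniform control: one must rule out that $\varphi^\#$ stays bounded along a sequence escaping to infinity in $Y$, which means showing that the sublevel sets $\{\varphi^\#<c\}$ meet each fiber in a set whose ``shadow'' in $Y$ is relatively compact, or more precisely that their image in $Y$ is relatively compact. For the Cousin case this hinges on the fact that $\varphi$, being an exhaustion on $X$, is already proper, and the compact-group averaging does not destroy properness; for the $(\mbb C^*)^k$ case one needs the convexity in the $\mbb R^k$ variable together with the minimum principle to guarantee that the infimum is finite and proper. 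The conceptually subtle point is the structure-group reduction in step one: one must argue carefully that a fiber bundle with fiber a Cousin group (resp.\ a principal $(\mbb C^*)^k$-bundle) can be replaced, without loss for the pseudoconvexity question, by one whose structure group is the maximal compact torus — for the principal $(\mbb C^*)^k$-bundle this is the statement that the bundle is ``polarizable''/comes from an $(S^1)^k$-bundle, which follows from $H^1(Y;\mathscr O^*)$-considerations or simply from the exponential sequence and the contractibility of $\mbb R^k$, while for the Cousin fiber one uses that holomorphic automorphisms of $C$ fixing the identity are linear and the translation part lands in $K$ after composing with the projection $C\to C/K$. Once these two points are in hand, the rest is the routine verification that averaging and taking fiberwise infima behave as expected.
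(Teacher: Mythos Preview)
Your overall strategy—take the fiberwise infimum of a plurisubharmonic exhaustion and invoke Kiselman's minimum principle—is exactly what the paper does. For case~(2) your outline is essentially the paper's proof: on a principal $(\mbb{C}^*)^k$-bundle the compact subtorus $(S^1)^k$ already acts globally (no structure-group reduction is needed), one averages to obtain an $(S^1)^k$-invariant exhaustion, and Kiselman applies.

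For case~(1), however, there is a genuine gap. Your ``structure-group reduction'' step is both unnecessary and, as stated, unjustified: the lemma concerns an arbitrary holomorphic fiber bundle with Cousin fiber~$C$, not a principal one, and there is no reason the holomorphic transition functions in $\Aut(C)$ can be reduced to the compact torus~$K$ while remaining holomorphic. The paper avoids this entirely by working \emph{locally}: on a trivialization $p^{-1}(U)\simeq\mbb{B}_n\times C$, the restriction of $\rho$ to each fiber $\{z\}\times C$ is a continuous plurisubharmonic function on the Cousin group~$C$. The key observation you are missing is that any such function is \emph{automatically} invariant under the maximal compact torus $K=\langle\Gamma\rangle_{\mbb R}/\Gamma$: its pull-back to $\mbb{C}^k$ is $\Gamma$-invariant and plurisubharmonic, hence constant along the complex subspace $V=\langle\Gamma\rangle_{\mbb R}\cap i\langle\Gamma\rangle_{\mbb R}$, and since the $V$-leaves are dense in $K$ (this is precisely the Cousin condition), continuity forces $\langle\Gamma\rangle_{\mbb R}$-invariance. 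So no averaging is needed; $\rho$ already descends fiberwise to a plurisubharmonic function on $\mbb{B}_n\times(\mbb{C}^k/V)$, and since $\langle\Gamma\rangle_{\mbb R}/V$ is a real form of $\mbb{C}^k/V$, Kiselman applies directly to give plurisubharmonicity of $\rho_Y(y):=\inf_{p^{-1}(y)}\rho$ on~$U$.

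Your worry about the exhaustion property is legitimate but handled more simply than you suggest: since $\rho$ is an exhaustion, it attains its minimum on each closed fiber, so $\rho_Y$ is real-valued; continuity follows from a subsequence argument using relative compactness of $\{\rho<c\}$; and $\{\rho_Y\le c\}\subset p(\{\rho\le c\})$ is compact. No convexity estimates are needed.
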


\begin{proof}   
We consider case (1) first.   
Let $\rho\colon X\to\mbb{R}$ be a continuous plurisubharmonic exhaustion of
$X$. We define a function $\rho_Y\colon Y\to\mbb{R}$ by
\begin{equation*}
\rho_Y(y):=\inf\bigl\{\rho(x);\ x\in p^{-1}(y)\bigr\}.
\end{equation*}
As an exhaustion $\rho$ attains a minimum on every closed complex submanifold of
$X$. In particular, we see that $\rho_Y$ is indeed real-valued.

Let us first show that $\rho_Y$ is continuous. For this let $(y_n)$ be a
sequence which converges to $y_0\in Y$. By the above remark there exist
elements $x_n,x_0\in X$ such that $\rho_Y(y_n)=\rho(x_n)$ and $\rho_Y(y_0)=
\rho(x_0)$. For every $\eps>0$ the set $U_{\eps}:=\bigl\{\rho<\rho(x_0)+\eps
\bigr\}$ is open and relatively compact in $X$ and contains $x_0$ and hence
almost all $x_n$. Therefore, there is $z\in\ol{U}_{\eps}$ such that $x_n\to z$
for a subsequence. We have $p(z)=\lim p(x_n)=y_0$ and consequently
$\rho(z)\geq\rho_Y(y_0)$. On the other hand,
\begin{equation*}
\rho_Y(y_n)=\rho(x_n)\to\rho(z)\leq\rho(x_0)+\eps=\rho_Y(y_0)+\eps
\end{equation*}
for all $\eps>0$ by continuity of $\rho$ which implies $\lim\rho_Y(y_n)=
\rho_Y(y_0)$ as was to be shown.

By continuity, $\{\rho_Y\leq c\}$ is closed in $Y$ for every $c\in\mbb{R}$. One
checks directly that $\{\rho_Y\leq c\}$ is contained in $p\{\rho\leq c\}$ which
implies that $\{\rho_Y\leq c\}$ is compact, hence that $\rho_Y$ is exhaustive.

Finally we show that $\rho_Y$ is plurisubharmonic. Since this can be checked
locally, let $U\subset Y$ be open and isomorphic to the unit ball $\mbb{B}_n$ so
that $p^{-1}(U)\simeq \mbb{B}_n\times C$ where $C\simeq\mbb{C}^k/\Gamma_{k+l}$ is
a Cousin group. For fixed $z\in\mbb{B}_n$ let $\rho_z$ be the plurisubharmonic
function $C\to\mbb{R}$, $g\mapsto\rho(z,g)$. Its pull-back to $\mbb{C}^k$ is
$\Gamma_{k+l}$-invariant and plurisubharmonic. Since the image of the complex
vector subspace $V:=\langle\Gamma_{k+l}\rangle_\mbb{R}\cap
i\langle\Gamma_{k+l}\rangle_\mbb{R}$ of $\mbb{C}^k$ in $C$ is an immersed complex
submanifold which is dense in the compact torus
$\langle\Gamma_{k+l}\rangle_\mbb{R}/\Gamma_{k+l}$, the pull-back of $\rho_z$ is
invariant under $\langle\Gamma_{k+l}\rangle_\mbb{R}$. Hence, it pushes down to a
plurisubharmonic function $\ol{\rho}_z$ on $\mbb{C}^k/V$ which is still invariant
under $\langle\Gamma_{k+l}\rangle_\mbb{R}/V$. Since
$\langle\Gamma_{k+l}\rangle_\mbb{R}/V$ is a real form of $\mbb{C}^k/V$, we may
apply Kiselman's minimum principle (see~\cite{Kis}) to the plurisubharmonic
function $\ol{\rho}\colon\mbb{B}_n\times(\mbb{C}^k/V)\to\mbb{R}$ and obtain
plurisubharmonicity of
\begin{equation*}
z\mapsto \rho_Y(z)=\inf\bigl\{ \ol{\rho}(z,w);\ w\in\mbb{C}^k/V\bigr\}
\end{equation*}
as was to be shown.  

In the second case we may apply 
Kiselman's minimum principle to an \emph{$(S^{1})^{k}$-invariant}
plurisubharmonic exhaustion of $X$ and 
essentially repeat the above argument.   
\end{proof}

Example~\ref{Ex:CousinPrincBundle} will show that pseudoconvexity
of $Y$ does not imply pseudoconvexity of $X$.

\section{Pseudoconvex reductive homogeneous spaces}

Recall that the holomorphic reduction of $X=G/H$ is given by $\pi\colon G/H\to
G/J$ where $J$ is a closed complex subgroup of $G$ containing $H$ such that
$G/J$ is holomorphically separable and $\mathscr{O}(G/H)\simeq
\pi^*\mathscr{O}(G/J)$. More precisely, one has
\begin{equation*}
J=\bigl\{g\in G;\ f(gH)=f(eH)\text{ for all $f\in\mathscr{O}(G)$}\bigr\}.
\end{equation*}
If $X=G/H$ is holomorphically convex, then the holomorphic reduction $X=G/H\to
Y=G/J$ coincides with the Remmert reduction of $X$, i.e., $Y$ is Stein and the
fiber $J/H$ is connected and compact. Conversely, if $X=G/H$ admits an
equivariant map onto a Stein manifold with connected compact fibers, then $X$
is holomorphically convex.

If $D$ is a pseudoconvex non-Stein domain in $X=G/H$ containing $x_0=eH$, then
the complex subgroup $\wh{H}$ constructed in Theorem~\ref{Thm:Main} must
be contained in $J$. However, even if $D=X=G/H$ is pseudoconvex non-Stein, the
group $\wh{H}$ does not necessarily coincide with $J$ as the example of a 
non-compact Cousin group shows.

In this section we suppose that $G$ is connected complex reductive. We shall see
that the base of the holomorphic reduction of a pseudoconvex $X=G/H$ is Stein. We
begin with the case that $G$ is semisimple.

\begin{thm} \label{Thm:semisimple}  
Let $X=G/H$ be a complex homogeneous manifold with $G$ a complex semisimple
Lie group. If $X$ is pseudoconvex, then $X$ is holomorphically convex.   
\end{thm}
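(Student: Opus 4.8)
The plan is to reduce to the orbit structure of $G$ on a suitable projective space and then invoke Theorem~\ref{projcase}. First I would recall that for $G$ semisimple, the normalizer fibration $X = G/H \to G/\mathscr{N}_G(H^0)$ has as its base an orbit of $\Ad(G)$ acting linearly on $\mathbb{P}(V)$ for a suitable finite-dimensional representation $V$ (e.g.\ a suitable exterior power of $\g$, with $H^0$ the stabilizer of the corresponding line). So the base $G/\mathscr{N}_G(H^0)$ is a $G$-orbit in a projective space, hence falls under the hypotheses of Theorem~\ref{projcase}. The fiber $\mathscr{N}_G(H^0)/H$ of the normalizer fibration is a complex Lie group (the connected component acts on itself), which I would want to show has $\mathscr{O} = \mathbb{C}$ in our situation, or at least that it contributes nothing to the obstruction; the key point being that a connected complex Lie group that is a fiber here is of the form $\mathscr{N}_G(H^0)^0/H^0$, and for $G$ semisimple one controls such groups using the structure theory (they are extensions built from $\mathbb{C}^*$'s, unipotent pieces, and semisimple pieces).

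The central difficulty is that even when $X = G/H$ is pseudoconvex, the base $G/\mathscr{N}_G(H^0)$ of the normalizer fibration need \emph{not} be pseudoconvex — this is explicitly warned about in the excerpt just before Theorem~\ref{Thm:semisimple}. So I cannot simply say ``the base is a pseudoconvex domain spread over a projective orbit, apply Theorem~\ref{projcase}.'' Instead I would work with the subgroup $\wh{H}$ furnished by Theorem~\ref{Thm:Main} (applied to $D = X$ itself). By Proposition~\ref{Prop:normalizing} we may assume, after passing to a covering $G/H^*$ which does not change pseudoconvexity and replaces $X$ by a homogeneous manifold still of the form $G/(\text{open subgroup})$, that $H$ normalizes $\wh{H}$, so $H\wh{H}$ is a subgroup. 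By Theorem~\ref{cldwhH}, $X$ is holomorphically convex if and only if $H\wh{H}$ is closed in $G$, so everything comes down to proving: \emph{for $G$ semisimple and $X = G/H$ pseudoconvex, the subgroup $H\wh{H}$ is closed in $G$.}

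To prove closedness I would mimic the commutator-iteration argument in the proof of Theorem~\ref{projcase}. Suppose $H\wh{H}$ is not closed; let $L$ be its topological closure and $G_1$ the connected subgroup with Lie algebra $\lie{l} + i\lie{l}$. As in that proof one gets $\wh{\lie h}\cap\lie l$-type normality relations: $\wh{\lie h} \lhd \lie g_1$ up to the $H$-part. Because $G_1/H$ is closed in $G/H$ (it is a fiber of $G/H \to G/G_1$ down to a homogeneous manifold whose base, being the quotient by a subgroup containing the closure, is again a $G$-orbit one controls), Lemma~\ref{Lem:Intersection} lets one pass to $G_1$ keeping $\wh{H}\cdot x_0$ relatively compact. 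Iterating, the dimension drops or one arrives at a closed group; since $G$ is semisimple, $[\g,\g] = \g$, so the ``purely algebraic argument'' at the end of the proof of Theorem~\ref{projcase} — namely $\lie{l}_n \supset [\g,\lie{l}_n] = \lie{l}_n' + i\lie{l}_n' = \g' = \g$ — applies verbatim and forces $\wh{\lie h} = \g$, i.e.\ $\wh H = G$ and $X$ is a single compact leaf, hence holomorphically convex (trivially, being compact). In the remaining case the iteration terminates with $H\wh{H}$ already closed, and Theorem~\ref{cldwhH} gives holomorphic convexity directly. The main obstacle, as noted, is setting up the iteration cleanly without the algebraicity that was available in the projective-orbit case: one must argue that each successive quotient $G/G_j$ is still well-behaved enough (the relevant orbits closed) to apply Lemma~\ref{Lem:Intersection}, and here one uses that $\mathscr{N}_G(H^0)$ and the $G_j$ arising are algebraic subgroups of the semisimple group $G$, so that the orbit-closure arguments from Theorem~\ref{projcase} carry over.
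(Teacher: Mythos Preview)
Your iteration strategy has a genuine gap: it relies on the claim that ``$\mathscr{N}_G(H^0)$ and the $G_j$ arising are algebraic subgroups of the semisimple group $G$,'' but this is not justified and in fact need not hold. Closed connected complex subgroups of a semisimple group are \emph{not} automatically algebraic (for instance, $t\mapsto\diag(e^t,e^{\alpha t},e^{-(1+\alpha)t})$ with $\alpha\in\mbb{C}\setminus\mbb{R}$ gives a proper embedding $\mbb{C}\hookrightarrow{\rm SL}(3,\mbb{C})$ whose image is closed but not algebraic). The group $\wh{H}$ is defined transcendentally via plurisubharmonic functions, and nothing forces the successive $L_j$, $G_j$ to be algebraic. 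Without algebraicity you lose the two reductions that make the proof of Theorem~\ref{projcase} work: you cannot replace $H$ by $H^0$ via a \emph{finite} cover (since $H/H^0$ may be infinite when $H$ is not algebraic), and you cannot invoke the commutator-fibration step, which in Theorem~\ref{projcase} used that $G'$-orbits are closed because $G'$ is algebraic \emph{in the ambient projective space}. Here there is no such ambient embedding of $G/H$.

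The paper's proof supplies exactly the missing bridge to the algebraic setting, and it is an external input you did not mention: by results of Berteloot and Berteloot--Oeljeklaus (\cite{Bert},~\cite{BerO}), any $H$-invariant plurisubharmonic function on a semisimple $G$ is automatically invariant under the Zariski closure $\ol{H}$. Hence the plurisubharmonic exhaustion of $X$ descends to one on $G/\ol{H}$, and $\ol{H}/H$ is compact. Now $G/\ol{H}$ \emph{is} a quasi-projective variety, so Theorem~\ref{projcase} applies directly to produce an algebraic $\wh{H}\supset\ol{H}$ with $\wh{H}/\ol{H}$ compact; one checks $\wh{H}=J$ and the fibration $G/H\to G/\wh{H}$ exhibits $X$ as holomorphically convex. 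In short, the step you flagged as ``the main obstacle'' is real, and it is resolved not by an internal iteration argument but by importing the invariance theorem for plurisubharmonic functions on semisimple groups.
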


\begin{proof}
Due to~\cite{BO} the base of the holomorphic reduction $\pi\colon X=G/H\to G/J$
is quasi-affine and $J$ is an algebraic subgroup of $G$. Hence, $\pi$ factorizes
as
\begin{equation*}
\xymatrix{
X=G/H\ar[rr]^{\pi}\ar[dr] & & G/J\\
& G/\ol{H}\ar[ur]
}
\end{equation*}
where $\ol{H}$ is the Zariski-closure of $H$ in $G$. Moreover, we have
$\mathscr{O}(G)^H=\mathscr{O}(G)^{\ol{H}}$, see~\cite{BO}. This shows that
$G/J$ is also the holomorphic reduction of $G/\ol{H}$.

Let $\varphi\colon X \to \mathbb R$ be a plurisubharmonic exhaustion
function. By \cite{Bert}, \cite{BerO} we get that $\varphi$, considered as a
function on $G$, is already invariant under the right $\ol H$-action on $G$.
Therefore $\varphi$ pushes down to a plurisubharmonic exhaustion function on the
homogeneous quotient of algebraic groups $G/{\ol H}$ and $\ol{H}/H$ is compact.
Then Theorem~\ref{projcase} gives the existence of an algebraic reductive
subgroup $\wh{H}\subset G$ containing $\ol H$ with $\wh{H}/\ol{H}$ compact. Note
that $\wh{H}=J$. Considering now the fibration $G/H \to G/\wh H$, the claim
follows. Here we used implicitly the fact that quotients of reductive groups are
Stein, if and only if the isotropy is also reductive (see \cite{Mat}, \cite{On}).
\end{proof}

\begin{ex}\label{Ex:CousinPrincBundle}
Let us give an example of a non-pseudoconvex semisimple manifold. Let
$G={\rm{SL}}(3,\mbb{C})$ and take $\Gamma\simeq\mbb{Z}$ as a discrete subgroup
of its maximal torus $T\simeq\mbb{C}^*\times\mbb{C}^*$ such that $T/\Gamma$ is a
non-compact Cousin group. Then we have $\ol{\Gamma}=T$ and the holomorphic
reduction of $X=G/\Gamma$ is the Stein manifold $G/T$. However, $X=G/\Gamma$ is
not pseudoconvex since the fiber $T/\Gamma$ is not compact.
\end{ex}

The above theorem does not hold in the reductive case as the simple example of a
Cousin groups shows. The following non-abelian and (in general)
non-holomorphically convex examples of pseudoconvex reductive manifolds indicate
their complexity.

\begin{ex}\label{Ex:reductive}
Let $G:={\rm{SL}}(2,\mathbb C)\times\mathbb C^*$ and $H\simeq\mathbb C^*\times
\mathbb C^*$ the subgroup of $G$ given by the product of the diagonal matrices
$D$ in ${\rm{SL}}(2,\mathbb C)$ and the second factor in $G$. Let $\Gamma \simeq
\mathbb Z$ be a discrete subgroup of $H$ and $J \subset H$ the smallest connected
real subgroup containing $\Gamma$ and the maximal compact subgroup of $H$. Then
$J\simeq S^1 \times S^1 \times \mathbb R$. Consider the complex homogeneous space
$X:=G/\Gamma$. We shall see that $X$ is pseudoconvex.

If $\dim_{\mathbb R}{\rm{SL}}(2,\mathbb C)\cap J =1$, we get that the real
fibration $X=G/\Gamma\rightarrow G/J$ has compact fibers and that
${\rm{SL}}(2,\mathbb C)$ acts transitively on the base $G/J={\rm{SL}}(2,\mathbb
C)/K$ where $K \subset D$ is the compact diagonal in ${\rm{SL}}(2,\mathbb C)$.
Therefore a left $K$-invariant plurisubharmonic exhaustion function on
${\rm{SL}}(2,\mathbb C)$ induces a plurisubharmonic exhaustion function on $X$.
Note that there are two possibilities for the Zariski-closure of $\Gamma$: First
it might be isomorphic to $\mbb{C}^*$ in which case $X$ is an elliptic curve
bundle over ${\rm{SL}}(2,\mbb{C})$, then holomorphically convex and K\"ahler,
see~\cite[Theorem~5.1]{GMO}. On the other hand, for generic $\Gamma$, the
manifold $X$ is a holomorphic fiber bundle with non-compact Cousin fibers over
the affine quadric, so it is not holomorphically convex. Depending on whether the
projection of $\Gamma$ to the central $\mbb{C}^*$-factor is closed or not, $X$
may or may not be K\"ahler. 

If $\dim_{\mathbb R} {\rm{SL}}(2,\mathbb C) \cap J =2$, then  $\Gamma \subset D
\subset {\rm{SL}}(2,\mathbb C)$ and $X$ is an elliptic curve bundle over the
product of the affine quadric and $\mathbb C^*$. Hence $X$ is pseudoconvex and
holomorphically convex but, since the ${\rm{SL}}(2,\mbb{C})$-orbits are not
closed in $X$, these examples are not K\"ahler.
\end{ex}

\begin{thm}\label{Thm:reductive}
Let $G$ be connected complex reductive and $X=G/H$ be pseudoconvex. Then the
holomorphic reduction $G/J$ of $X$ is Stein and we have $\mathscr{O}(J/H)
=\mbb{C}$.
\end{thm}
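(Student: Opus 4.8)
The plan is to reduce the statement to the semisimple case (Theorem~\ref{Thm:semisimple}) by a careful analysis of the center of $G$. Write $G = Z^0 \cdot G'$ where $Z^0$ is the connected center and $G' = [G,G]$ is semisimple, and let $\pi\colon G/H \to G/J$ be the holomorphic reduction. First I would observe that, since $G/J$ is holomorphically separable and $J \supseteq H$, it suffices to show that $G/J$ is Stein and $\mathscr{O}(J/H) = \mathbb{C}$; the latter is essentially automatic from the definition of the holomorphic reduction once we know $J$ is the ``right'' group, so the real content is Steinness of $G/J$.

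The key step is to push the plurisubharmonic exhaustion $\varphi$ of $X$ down through an appropriate tower of fibrations. Consider the fibration $G/H \to G/(G'H)$; its base is a quotient of the abelian group $Z^0$, hence $G/(G'H) \simeq (\mathbb{C}^*)^a \times \mathbb{C}^b \times C$ for some Cousin group $C$ and some $a,b \ge 0$ (because $Z^0$ is a commutative complex Lie group). The fiber $G'H/H \simeq G'/(G' \cap H)$ is a homogeneous space of the semisimple group $G'$, and the restriction of $D := X$ to each such fiber is pseudoconvex; by Theorem~\ref{Thm:semisimple} it is holomorphically convex, so its own holomorphic reduction $G'/(G'\cap H) \to G'/J'$ has Stein base and compact connected fiber. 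Using the generalized Kiselman minimum principle of Lemma~\ref{Lem:Kiselman} I would then push $\varphi$ down first across the compact fibers $J'/(G'\cap H)$ and then, coordinate by coordinate, across the Cousin and $(\mathbb{C}^*)^k$ directions coming from the central torus, using that these appear as (pieces of) principal bundles. This produces a plurisubharmonic exhaustion on $G/J$ where $J$ is the group generated by $G'$-part $J'$ together with the relevant central subgroup; one checks $\mathscr{O}(G)^H = \mathscr{O}(G)^J$ so that this $J$ is indeed the holomorphic reduction group, and that the resulting exhausted base is holomorphically separable, hence Stein. Finally, since the fiber $J/H$ is an extension of the compact fiber $J'/(G'\cap H)$ by a Cousin group, every holomorphic function on it is constant, giving $\mathscr{O}(J/H) = \mathbb{C}$.

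The main obstacle I expect is organizing the interplay between the central and semisimple directions so that Kiselman's minimum principle can actually be applied: Lemma~\ref{Lem:Kiselman} requires the relevant fiber bundle to be either a Cousin-group bundle or a principal $(\mathbb{C}^*)^k$-bundle, and a priori the fibration $G/H \to G/J$ need not split into such pieces in an obvious way. One must peel off the central factors in the correct order — first making the torus part appear as a genuine principal bundle (possibly after passing to a finite covering of $H$, as permitted by Proposition~\ref{Prop:normalizing}) — and verify at each stage that the intermediate base is still pseudoconvex and that the exhaustion remains invariant under the relevant compact real form, so that the minimum over the fiber stays plurisubharmonic. A secondary technical point is controlling that the fiber $G'/(G'\cap H)$ over the central base is closed (which follows since $G'H$ is closed, $G'$ being semisimple hence its orbits behaving well), so that restricting $\varphi$ to it is legitimate; once that is in place, the iteration terminates for dimension reasons.
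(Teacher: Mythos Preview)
Your overall strategy --- reduce to the semisimple case via Lemma~\ref{Lem:Kiselman} --- is the right one, but the specific fibration you choose does not work. You fiber $G/H$ over the abelian base $G/(G'H)$, and you justify closedness of $G'H$ by saying ``$G'$ being semisimple hence its orbits behaving well''. That argument is valid in the projective setting of Theorem~\ref{projcase}, where algebraicity of $G'$ forces $G'$-orbits to be closed, but it fails for abstract reductive $G$: closedness of $G'H$ is equivalent to the image of $H$ in the torus $G/G'$ being closed, and this is not automatic. Concretely, in Example~\ref{Ex:reductive} with $G={\rm SL}(2,\mbb{C})\times\mbb{C}^*$ and a generic $\Gamma\simeq\mbb{Z}$ whose projection to the central $\mbb{C}^*$ is dense, $X=G/\Gamma$ is pseudoconvex but $G'\Gamma={\rm SL}(2,\mbb{C})\times p_2(\Gamma)$ is \emph{not} closed. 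So your very first fibration does not exist in general, and the subsequent ``push down over compact fibers $J'/(G'\cap H)$, then over the Cousin and $(\mbb{C}^*)^k$ directions'' never gets started.

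The paper avoids this by fibering in the opposite direction: instead of an abelian base, it produces a \emph{semisimple} base $G/(HZ)$ with $Z=\mathscr{Z}(G)^0$. This requires $HZ$ to be closed, which is likewise not automatic; the paper first uses an induction step (via the normalizer $N$ and the holomorphic reduction of $N/H$) to reduce to the case where $N/H$ is Stein, and in that case the $(\mbb{C}^*)^k$-action of $Z$ on the Stein manifold $N/H$ has closed orbits, forcing $Z\cap H\simeq(\mbb{C}^*)^{l_1}$ and $HZ$ closed. Then $G/H\to G/(HZ)$ is a genuine $(\mbb{C}^*)^{k-l_1}$-principal bundle, Lemma~\ref{Lem:Kiselman} applies, and Theorem~\ref{Thm:semisimple} handles the base. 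A further comparison with $G/\ol{H}$ (using $\ol{HZ}=\ol{H}Z$) and Theorem~\ref{projcase} finishes the Steinness of $G/J$.

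One more point: you say $\mathscr{O}(J/H)=\mbb{C}$ is ``essentially automatic from the definition of the holomorphic reduction''. It is not. The definition only says that functions on $G/H$ are constant along $J/H$; it does not rule out holomorphic functions on $J/H$ that fail to extend to $G/H$. The paper proves this separately: since $G/J$ is Stein, $J$ is reductive (Matsushima--Onishchik), so the theorem applies to the pseudoconvex $J/H$, whose holomorphic reduction $J/I$ is Stein; then $G/I$ is Stein as well, and the factorization $G/H\to G/I\to G/J$ forces $I=J$.
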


\begin{proof}
We define $N$ to be the union of all connected components of the normalizer
$\mathscr{N}_G(H)$ of $H$ in $G$ which meet $H$ so that $N/H$ is a connected
complex Lie group. Note that $N$ contains $Z:=\mathscr{Z}(G)^0
\simeq(\mbb{C}^*)^k$.

Consider the holomorphic principal bundle $X=G/H\to G/N$ with structure group
$N/H$ and let $N/H\to N/I$ be the holomorphic reduction of its fiber. We obtain
a new principal bundle $X=G/H\to G/I$ whose fiber $I/H$ is now a Cousin group.
Due to Lemma~\ref{Lem:Kiselman} the base $G/I$ is again pseudoconvex. Moreover,
$G/H$ and $G/I$ have the same holomorphic reduction. Suppose that $\dim G/I<\dim
G/H$ holds. Arguing by induction over $\dim G/H$ we may assume that the
holomorphic reduction of $G/I$ is Stein, hence the same is true for $X=G/H$.

Therefore we must deal with the case $\dim G/I=\dim G/H$. This implies that
$I=H$, i.e., that $N/H$ is Stein. As noted above, the group
$Z\simeq(\mbb{C}^*)^k$ acts holomorphically on $N/H$. Since the latter is Stein,
$Z$ has a closed orbit in $N/H$ and since $Z$ is normal, all of its orbits are
closed. In particular, we have $Z\cap H\simeq(\mbb{C}^*)^{l_1}$. Thus we obtain a
fibration $X=G/H\to G/(HZ)$ which is a principal bundle with structure group
$(\mbb{C}^*)^{k-l_1}$. Due to Lemma~\ref{Lem:Kiselman} its ase $G/(HZ)$ is
pseudoconvex. Since the derived group $G'$ acts transitively on $G/(HZ)$,
Theorem~\ref{Thm:semisimple} yields that $G/(HZ)$ is holomorphically convex. Let
us consider the commutative diagram
\begin{equation*}
\xymatrix{
G/H\ar[rr]\ar[dd]\ar[dr]_{\pi} & & G/\ol{H}\ar[dd]\\
 & G/\bigl(\ol{H}\cap(HZ)\bigr)\ar[ur]\ar[dl]\ar[dr] & \\
G/HZ\ar[rr] & & G/\ol{HZ}
}
\end{equation*}
where the bars denote the Zariski closure inside $G$. It is sufficient to show
that $G/\ol{H}$ is pseudoconvex since $G/\ol{H}$ is then holomorphically convex
by Theorem~\ref{projcase} and since the holomorphic reductions of $G/H$ and
$G/\ol{H}$ coincide by~\cite{BO}.

The proof of Theorem~\ref{Thm:semisimple} shows that $\ol{HZ}/HZ$ is compact.
Since $Z$ is an algebraic subgroup of $G$, we have $\ol{HZ}=\ol{H}Z$. This
implies that $\ol{HZ}/\ol{H}\simeq Z/(Z\cap\ol{H})\simeq(\mbb{C}^*)^{k-l_2}$.
Consider the fiber bundle $\ol{H}/H\to\ol{H}/\bigl(\ol{H}\cap(HZ)\bigr)$.   
The torus $\ol{H}\cap Z$ acts transitively on its fiber. Hence this fiber
satisfies $\bigl(\ol{H}\cap(HZ) \bigr)/H\simeq(\ol{H}\cap Z)/(H\cap Z)\simeq
(\mbb{C}^*)^{l_2-l_1}$. Consequently, the map $\pi$ is a
$(\mbb{C}^*)^{l_2-l_1}$-principal bundle and Lemma~\ref{Lem:Kiselman} shows
that $G/\bigl(\ol{H}\cap(HZ)\bigr)$ is pseudoconvex. Again from
$\ol{HZ}=\ol{H}Z$ we conclude that $\ol{H}$ acts transitively on $\ol{HZ}/HZ$,
hence that $\ol{H}/\bigl(\ol{H}\cap(HZ)\bigr)\simeq\ol{HZ}/HZ$ is compact.
Therefore $G/\ol{H}$ is pseudoconvex, thus $G/J$ is Stein.

We still must prove $\mathscr{O}(J/H)=\mbb{C}$. Since $G/J$ is Stein, $J$ is
reductive by \cite{Mat} or \cite{On}. Now consider the holomorphic reduction
$J/H\to J/I$. Since $J/H$ is a closed complex submanifold of $X=G/H$, it is
pseudoconvex, hence by the above $J/I$ is Stein. Therefore, $I$ is reductive and
$G/I$ is Stein. From the factorization $G/H\to G/I\to G/J$ we obtain $I=J$. Hence
$\mathscr{O}(J/H)=\mbb{C}$, as was to be shown.
\end{proof}

A compact K\"ahler $G/H$ is a product of a compact complex torus and a
homogeneous flag manifold; see Matsushima~\cite{Mat57} and
Borel-Remmert~\cite{BR}. For $G$ reductive we have the following extension of this
result for a K\"ahler pseudoconvex $G/H$ under the assumption that
$\mathscr{O}(G/H)\simeq\mathbb C$. In particular, Theorem~\ref{Thm:Kaehler}
applies to the fiber of the holomorphic reduction of any K\"ahler pseudoconvex
reductive homogeneous manifold because of Theorem~\ref{Thm:reductive}.  

\begin{thm}\label{Thm:Kaehler}
Let $G$ be connected complex reductive and $X=G/H$ be pseudoconvex and K\"{a}hler
with $\mathscr{O}(X)\simeq\mathbb C$. Then $G/\ol{H}$ is a homogeneous rational
manifold, $\ol{H}/H$ is a Cousin group, and the bundle $G/H \to  G/\ol{H}$ is
holomorphically trivial.
\end{thm}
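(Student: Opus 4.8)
The plan is to analyze the structure of $X=G/H$ using the already-established fact (Theorem~\ref{Thm:reductive}) that, since $X$ is pseudoconvex and $\mathscr{O}(X)\simeq\mathbb C$, the holomorphic reduction is trivial, i.e.\ $J=G$ and hence $\wh H\subset G=J$ with $G/\ol H$ being the common holomorphic reduction of $G/H$ and $G/\ol H$. First I would recall from the proof of Theorem~\ref{Thm:reductive} (and \cite{BO}) that $\ol H/H$ is a compact Lie group; since $X$ is K\"ahler, so is the closed submanifold $\ol H/H$, and a compact K\"ahler homogeneous space of the complex group $\ol H$ is, by Borel--Remmert~\cite{BR} and Matsushima~\cite{Mat57}, a product of a complex torus and a flag manifold. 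But $\mathscr{O}(X)\simeq\mathbb C$ forces $\ol H/H$ to carry no non-constant holomorphic functions either (it sits inside $X$ as a fiber of the trivial holomorphic reduction, so $\mathscr{O}(\ol H/H)=\mathbb C$), which rules out the flag-manifold factor unless it is a point; hence $\ol H/H$ is a compact complex torus. Since $\ol H/H$ is also a connected complex Lie group (as $H^0\lhd\ol H$ after passing to the relevant components — here one uses that $H$ is normal in $\ol H$ in the reductive algebraic setting, cf.\ \cite{BO}), it is a Cousin group; being compact it is in fact a torus, but we only need "Cousin group" for the statement.

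Next I would show $G/\ol H$ is a homogeneous rational manifold. By Theorem~\ref{Thm:reductive} applied to $X$, $G/\ol H$ is the holomorphic reduction of the pseudoconvex manifold $G/H$ and is therefore Stein; but $\mathscr{O}(X)\simeq\mathbb C$ together with $\mathscr{O}(G/H)\simeq\pi^*\mathscr{O}(G/\ol H)$ forces $\mathscr{O}(G/\ol H)\simeq\mathbb C$, and a Stein manifold with only constant holomorphic functions is a point — wait, that degenerates the claim. The correct route is different: K\"ahlerness must be exploited globally. I would instead invoke the structure theory for K\"ahler homogeneous spaces of reductive groups (Gilligan--Miebach--Oeljeklaus~\cite{GMO} and the Borel--Remmert--type results): for $G$ reductive and $G/H$ K\"ahler, the normalizer fibration $G/H\to G/\mathscr{N}_G(H^0)$ has as base a K\"ahler flag manifold $G/\ol H$ (so $\ol H$ is parabolic, hence $G/\ol H$ is rational homogeneous) and as fiber $\ol H/H=\mathscr{N}_G(H^0)/H$ a connected complex abelian Lie group with no non-constant holomorphic functions, i.e.\ a Cousin group; the identification of $\mathscr{N}_G(H^0)$ with the Zariski closure $\ol H$ comes from the fact that in the reductive-algebraic situation $H\lhd\ol H$ and $\ol H/H$ being a torus forces $\ol H=\mathscr{N}_G(H^0)^{\text{(appropriate components)}}$, matching the notation of Theorem~\ref{Thm:reductive}.

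Finally, to see that the bundle $G/H\to G/\ol H$ is holomorphically trivial: $G/\ol H$ is a simply connected rational homogeneous manifold, hence $\ol H$ is connected and $G$ acts on the principal bundle $G/H\to G/\ol H$ with structure group the Cousin group $C:=\ol H/H$. I would argue that since $C$ is a Cousin group, $H^1(G/\ol H,\mathscr{O})=0$ (cohomology of a flag manifold with coefficients in the structure sheaf vanishes in positive degree), so holomorphic $C$-bundles over $G/\ol H$ are classified by $H^1(G/\ol H,\underline{C})$ where $\underline C$ is the sheaf of holomorphic maps to $C$; writing $C=\mathbb C^n/\Lambda$ and using the exponential sequence $0\to\Lambda\to\mathscr{O}^{\,n}\to\underline C\to 0$ together with $H^j(G/\ol H,\mathscr{O})=0$ for $j>0$ and $H^1(G/\ol H,\Lambda)=0$ (as $G/\ol H$ is simply connected, so $H^1(G/\ol H,\mathbb Z)=0$), one gets $H^1(G/\ol H,\underline C)=0$, so the bundle is trivial. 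The main obstacle I anticipate is the careful matching of the group-theoretic bookkeeping: identifying the $\ol H$ of this theorem (Zariski closure) with the normalizer appearing in the K\"ahler structure theory, and confirming that the fiber really is a Cousin group and not merely a quotient of one — this requires combining $\mathscr{O}(X)\simeq\mathbb C$, compactness of $\ol H/H$ from \cite{BO}, and the K\"ahler hypothesis precisely where Borel--Remmert applies.
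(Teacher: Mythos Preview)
Your proposal has two genuine gaps.

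\textbf{The fiber $\ol H/H$ need not be compact.} You assert that ``from the proof of Theorem~\ref{Thm:reductive} (and \cite{BO}) $\ol H/H$ is a compact Lie group,'' and then invoke Borel--Remmert. But neither reference gives compactness of $\ol H/H$ in the reductive case, and the conclusion of the theorem you are proving only says \emph{Cousin group}, not \emph{compact torus}: Cousin groups $\mbb C^n/\Gamma$ with $\rk\Gamma<2n$ are non-compact (and are always K\"ahler, so the K\"ahler hypothesis on $X$ does not help). Your self-correction (``wait, that degenerates the claim'') catches a different problem and does not repair this one; the subsequent appeal to a general ``structure theory for K\"ahler homogeneous spaces of reductive groups'' is not a proof. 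The paper instead uses the K\"ahler hypothesis in a very specific way: by \cite[Theorem~5.1]{GMO} the group $G'\cap H$ is algebraic, from which one deduces that $\ol H/H$ is an Abelian complex Lie group with no $\mbb C$-factor, hence of the form $C\times(\mbb C^*)^k$ with $C$ Cousin. One then pushes the plurisubharmonic exhaustion down to $G/\ol H$ via Lemma~\ref{Lem:Kiselman}, applies Theorem~\ref{projcase}, and uses $\mathscr O(X)=\mbb C$ to conclude $G/\ol H$ is compact rational; Lemma~\ref{Lem:TrivialBundle} then kills the $(\mbb C^*)^k$-factor.

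\textbf{The cohomological triviality argument does not close.} From the sheaf sequence $0\to\Lambda\to\mathscr O^{\,n}\to\underline C\to 0$ and $H^j(G/\ol H,\mathscr O)=0$ for $j>0$ you get
\[
H^1(G/\ol H,\underline C)\ \cong\ \ker\bigl(H^2(G/\ol H,\Lambda)\to H^2(G/\ol H,\mathscr O^{\,n})\bigr)\ =\ H^2(G/\ol H,\Lambda),
\]
and $H^2$ of a nontrivial flag manifold with constant $\mbb Z$-coefficients is \emph{nonzero}. So your vanishing claim fails, and you have not shown that the particular class of $G/H\to G/\ol H$ is zero. The paper avoids cohomology entirely: it introduces the second fibration $p_2\colon G/H\to G/G'H$ (whose base is the Cousin group $G/G'H$), restricts $p_1\colon G/H\to G/\ol H$ to the $p_2$-fiber $G'/(G'\cap H)$, and observes that this restricted map is onto $G/\ol H$ with fiber $(G'\cap\ol H)/(G'\cap H)$, an \emph{algebraic} subgroup of the Cousin group $\ol H/H$, hence finite; connectedness of the parabolic $G'\cap\ol H$ then forces $G'\cap\ol H=G'\cap H$, which gives the product splitting $G/H\cong(G/\ol H)\times(\ol H/H)$ directly.
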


\begin{rem}
Note that for any K\"ahler pseudoconvex reductive homogeneous manifold, since the
base $G/J$ of its holomorphic reduction $G/H\to G/J$ is Stein, $J/\ol{H}$ is
compact if and only if $G/\ol{H}$ is holomorphically convex. Moreover, $J$ is
reductive and $\wh{\lie{h}} = \lie{j}'\oplus V$ in this setting, where $\lie{j}'$
denotes the derived Lie algebra of $\lie{j}$ and $V$ is the complex vector
subspace of $\mathbb C^k$ defined in the proof of Lemma~\ref{Lem:Kiselman} with
the space $\mathbb C^k$ considered as the Lie algebra of the Cousin group $C$.
Note also that Example~\ref{Ex:reductive} shows that in general $G/H\to G/J$ is
\emph{not} holomorphically trivial.
\end{rem}

\begin{proof}
In order to prove the first claim we will show that $G/\ol{H}$ is pseudoconvex.
Since $X$ is K\"ahler, $G'\cap H$ is algebraic by~\cite[Theorem~5.1]{GMO}. Note
that $G'\cap H$ is a closed normal subgroup of $H$ and that $H/(G'\cap H)$ is an
Abelian complex Lie group. Since $G'\cap H$ is algebraic, $\mathscr{N}_G(G'\cap
H)$ is also an algebraic subgroup of $G$, hence must contain $\ol{H}$. Thus
$\ol{H}/(G'\cap H)$ is an affine algebraic group containing $H/(G'\cap H)$ as a
Zariski-dense closed complex subgroup. This implies that $\ol{H}/(G'\cap H)$ is
Abelian as well.   Hence
\begin{equation*}
\ol{H}/H\cong\bigl(\ol{H}/(G'\cap H)\bigr)/\bigl(H/(G'\cap H)\bigr)
\end{equation*}
is an Abelian complex Lie group which cannot have a factor isomorphic to
$\mbb{C}$. Therefore we have shown that $G/H\to G/\ol{H}$ is a holomorphic
principal bundle with fiber the product of a Cousin group and possibly
$(\mbb{C}^*)^k$. Applying Lemma~\ref{Lem:Kiselman} we see that $G/\ol{H}$ is
pseudoconvex, thus holomorphically convex by Theorem~\ref{projcase}.
Since $\mathscr{O}(G/H)=\mbb{C}$, the space $G/\ol{H}$ is compact and thus homogeneous rational.    

In order to complete the proof we will show that the principal bundle
$G/H\to G/\ol{H}$ is holomorphically trivial and that $\ol{H}/H$ is a Cousin group   
and we show the latter first.  
Since $\ol{H}/H=C\times(\mbb{C}^*)^k$ where $C$ is a Cousin group, we have 
the factorization $G/H\to G/L\to G/\ol{H}$ where $L/H=C$ et $\ol{H}/L=(\mbb{C}^*)^k$.   
Due to Lemma~\ref{Lem:TrivialBundle} below, the principal bundle $G/L\to G/\ol{H}$ is
holomorphically trivial.    
Hence, $k\geq1$ would contradict the fact that every
holomorphic function on $G/H$ is constant.   
This shows that $\ol{H}/H$ is a Cousin group.

Finally, let us consider the two fibrations
\begin{equation*}
\xymatrix{
G/H \ar[r]^-{p_1}\ar[d]_{p_2} & G/\ol{H}\simeq G'/(G'\cap\ol{H})\\
G/G'H,
}
\end{equation*}
where $G'/(G'\cap\ol{H})$ is homogeneous rational and $G/G'H$ is a Cousin
group. The restriction of $p_1$ to the $p_2$-fiber $G'/(G'\cap H)$ is still
surjective. Thus it is a holomorphic bundle with fiber the algebraic variety
$(G'\cap \ol{H})/(G'\cap H)$ which is a closed subgroup of the Cousin group
$\ol{H}/H$. This implies that $(G'\cap\ol{H})/(G'\cap H)$ is finite. Since the
parabolic group $G'\cap\ol{H}$ is connected, we obtain $G'\cap\ol{H}= G'\cap
H$, and hence $G/H$ is the product of $G/\ol{H}$ and $\ol{H}/ H$.
\end{proof}

\begin{lem}\label{Lem:TrivialBundle}
Let $S$ be a connected semisimple complex Lie group, let $P$ be a parabolic
subgroup of $S$, and let $p\colon X\to S/P$ be an equivariant holomorphic
principal bundle with structure group $T=(\mbb{C}^*)^k$. If $X$ is pseudoconvex,
then the bundle is trivial.
\end{lem}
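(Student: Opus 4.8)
The plan is to decompose the bundle into line bundles over the flag manifold $S/P$, to reduce to the case of a single line bundle, and to derive a contradiction from a closed rational curve in $S/P$ over which that line bundle is non-trivial. Concretely, a holomorphic principal $(\mbb{C}^*)^k$-bundle $X\to S/P$ is given by $k$ holomorphic line bundles $L_1,\dots,L_k$ on $S/P$: writing $L^\times$ for the complement of the zero section in the total space of a line bundle $L$, one has $X\cong L_1^\times\times_{S/P}\dotsb\times_{S/P}L_k^\times$, and for each $j$ the space $L_j^\times$ is the quotient of $X$ by the subtorus $(\mbb{C}^*)^{k-1}\subset(\mbb{C}^*)^k$ whose $j$-th coordinate is $1$. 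Since $X$ is $S$-equivariant, each $L_j$ is $S$-linearizable, hence a homogeneous line bundle $L_{\chi_j}$ attached to a character $\chi_j$ of $P$; in any case $\operatorname{Pic}(S/P)$ is a free abelian group generated by homogeneous line bundles. Now $X\to L_j^\times$ is a holomorphic principal $(\mbb{C}^*)^{k-1}$-bundle, so Lemma~\ref{Lem:Kiselman}(2) shows that each $L_j^\times$ is pseudoconvex. It therefore suffices to prove that a holomorphic line bundle $L$ on $S/P$ with $L^\times$ pseudoconvex is trivial; granting this, a choice of nowhere-vanishing holomorphic section of each $L_j$ trivializes $X$.

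So suppose $L$ is a non-trivial line bundle on $S/P$ with $L^\times$ pseudoconvex. We may take $L=L_\chi$ homogeneous with $\chi\ne 0$. Since a character of $P$ is trivial on the unipotent radical of $P$ and on the derived group of a Levi factor, $\chi$ pairs non-trivially with the coroot $\alpha^\vee$ of some simple root $\alpha$ not belonging to that Levi factor. The orbit closure $C:=\overline{S_\alpha\cdot eP}$ of the corresponding three-dimensional simple subgroup $S_\alpha\subset S$ is a rational curve biholomorphic to $\mbb{P}_1$ which is closed in the compact manifold $S/P$, and $d:=\deg(L|_C)=\langle\chi,\alpha^\vee\rangle\ne 0$. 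Hence $p^{-1}(C)=(L|_C)^\times$ is a closed complex submanifold of the pseudoconvex manifold $X=L^\times$, and therefore is itself pseudoconvex, the restriction of a continuous plurisubharmonic exhaustion of $X$ to the closed submanifold $p^{-1}(C)$ being again such. But $L|_C\cong\mathscr{O}_{\mbb{P}_1}(d)$ with $d\ne 0$, and $\mathscr{O}_{\mbb{P}_1}(d)^\times$ is \emph{not} pseudoconvex: by a fibrewise power map, together with the canonical isomorphism $L^\times\cong(L^*)^\times$, it is finitely covered by $\mathscr{O}_{\mbb{P}_1}(-1)^\times$, the complement of the zero section in the tautological bundle, which is biholomorphic to $\mbb{C}^2\setminus\{0\}$; and $\mbb{C}^2\setminus\{0\}$ carries no continuous plurisubharmonic exhaustion, since by the Hartogs phenomenon it is not holomorphically convex while every pseudoconvex open subset of $\mbb{C}^2$ is Stein. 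As pseudoconvexity pulls back along a finite covering, this is a contradiction, so $L$ must be trivial.

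The step I expect to be the main obstacle is the geometric input used in the second paragraph, namely the fact that a non-trivial homogeneous line bundle on a generalized flag manifold has non-zero degree on a closed rational curve contained in it (equivalently, that such a bundle corresponds to a non-zero character of $P$ pairing non-trivially with a coroot of the ambient group). The remaining ingredients — the splitting of the principal bundle into line bundles and the ensuing rank reduction via Lemma~\ref{Lem:Kiselman}, the closedness of $p^{-1}(C)$ in $X$, the stability of continuous plurisubharmonic exhaustions under restriction to closed complex submanifolds and under pull-back along finite coverings, and the failure of pseudoconvexity for $\mbb{C}^2\setminus\{0\}$ — are all standard.
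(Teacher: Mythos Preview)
Your proof is correct and follows the same overall strategy as the paper's: restrict to a suitable $\mbb{P}_1\subset S/P$ (the orbit of a root $\mathrm{SL}(2)$-subgroup $S_\alpha$ with $\alpha$ not in the Levi of $P$) and produce, inside the bundle, a closed copy of a finite quotient of $\mbb{C}^2\setminus\{0\}$, which is not pseudoconvex. The difference lies in how the rank of $T$ is handled. You first reduce to $k=1$ by passing to the quotient $X\to L_j^\times$ and invoking Lemma~\ref{Lem:Kiselman}(2), and then argue with a single line bundle restricted to $C\cong\mbb{P}_1$, reducing to the classical fact that $\mathscr{O}_{\mbb{P}_1}(d)^\times$ is not pseudoconvex for $d\ne0$. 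The paper instead keeps the full torus $T$: it writes $X\simeq S\times_PT$ for a homomorphism $P\to T$, restricts directly to the closed submanifold $S_\alpha\times_{(S_\alpha\cap P)}T\subset X$, and then uses the fibration $S_\alpha\times_{(S_\alpha\cap P)}T\to T/\wt{T}$ (where $\wt{T}$ is the image of $P\to T$) to exhibit a fiber isomorphic to a finite quotient of $S_\alpha/(S_\alpha\cap P)'\simeq\mbb{C}^2\setminus\{0\}$. Your route is arguably cleaner, at the price of appealing to Lemma~\ref{Lem:Kiselman}; the paper's route avoids that lemma and uses only restriction to closed submanifolds.
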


\begin{proof}
An equivariant holomorphic $T$--principal bundle $p\colon X\to S/P$ is of the
form $X\simeq S\times_PT$ where the twisted product is defined by a holomorphic
group homomorphism $P\to T$. Since $T$ is Abelian, this homomorphism
factorizes over $P/P'\simeq(\mbb{C}^*)^l$. Since $P/P'$ is reductive, the
factorized homomorphism is algebraic and in particular its image is an
algebraic subtorus $\wt{T}$ of $T$. Hence, we obtain a second fiber bundle
\begin{equation*}
\xymatrix{
X\simeq S\times_PT\ar[d]_p\ar[r] & T/\wt{T}\\
S/P.
}
\end{equation*}
Let us assume in a first step that $S={\rm{SL}}(2,\mbb{C})$ and thus that $P$ is
a Borel subgroup of $S$. Then $P/P'\simeq\mbb{C}^*$ and the bundle $X\simeq
S\times_PT\to S/P\simeq\mbb{P}_1$ is non-trivial if and only if $P/P'\to T$ is
non-constant. If this is the case, the fiber of $S\times_PT\to T/\mbb{C}^*$ is
isomorphic to a finite quotient of $S/P'\simeq\mbb{C}^2\setminus\{0\}$, hence we
find a closed embedding of such a finite quotient of $\mbb{C}^2\setminus\{0\}$
inside $X$. Since this implies that $X$ is not pseudoconvex, we have proved the
claim for $S={\rm{SL}}(2,\mbb{C})$.

For arbitrary $S$ we find root subgroups $S_\alpha\simeq{\rm{SL}}(2,\mbb{C})$ of
$S$ such that $S_\alpha\cap P$ is a Borel in $S_\alpha$. If the homomorphism
$P\to T$ is not trivial, then its restricition to $S_\alpha\cap P$ is not
trivial for some root $\alpha$. Since $S_\alpha\times_{(S_\alpha\cap P)}T$ is a
closed complex submanifold of $X$, this is in contradiction with the previous
case.
\end{proof}

The following example shows that a general pseudoconvex reductive homogeneous
manifold is \emph{not} a Cousin bundle over a holomorphically convex manifold.

\begin{ex}
Let $\Gamma\subset S={\rm{SL}}(2,\mbb{C})$ be a cocompact discrete subgroup
such that $\Gamma/\Gamma'$ contains an element of infinite order. Existence of
such $\Gamma$ is shown in e.g.~\cite{Mill}. Then there is a homomorphism
$\varphi\colon\Gamma\to\mbb{C}^*$ with dense image in $S^1\subset\mbb{C}^*$. We
define the reductive homogeneous manifold $X=G/\Gamma_G$ where
$G:=S\times\mbb{C}^*$ and $\Gamma_G$ is the graph of $\varphi$, hence a discrete
subgroup of $G$. By construction, $X$ is the total space of a holomorphic
$\mbb{C}^*$-principal bundle over the compact base $S/\Gamma$.

We claim that $X$ is pseudoconvex. Let $\rho$ be an $S^1$-invariant strictly
plurisubharmonic exhaustion of $\mbb{C}^*$. Then the function
$G\to\mbb{R}^{\geq0}$, $(s,z)\mapsto\rho(z)$, is a $\Gamma_G$-invariant
plurisubharmonic function of $G$, hence descends to a plurisubharmonic function
on $X=G/\Gamma_G$. The fibers of this function are the closures of finitely many 
$S$-orbits in $X$, thus compact. Therefore $X$ is indeed pseudoconvex.

One sees that $\wh{\Gamma}_G=S$ has no locally closed orbit in $X$ and that
$\mathscr{O}(X)=\mbb{C}$ so that $X$ is neither K\"ahler nor holomorphically
convex.
\end{ex}
 
\section{The structure of pseudoconvex solvmanifolds}

In this section we prove a structure theorem for pseudoconvex solvmanifolds,
i.e., homogeneous spaces $X=G/H$ where $G$ is connected solvable. Replacing $G$
by its universal covering we will assume from now on that $G$ is simply connected.   
Note that then every connected Lie subgroup of $G$ is automatically
closed and simply connected.

We start with the following observation from~\cite{Hu10} which deals with the
nilpotent case.

\begin{thm}\label{Thm:nilpotent}
Assume that $G$ is nilpotent and that $H$ is a closed complex subgroup of $G$.
Then the complex nilmanifold $X = G/H$ is pseudoconvex.
\end{thm}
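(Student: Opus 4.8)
The plan is to argue by induction on $\dim_{\mathbb C}G$, reducing $X=G/H$ to a homogeneous space of a lower-dimensional nilpotent group together with a controlled fibration; this is essentially the argument of~\cite{Hu10}. The base case $\dim G=0$ is trivial. For the inductive step one uses that a nontrivial simply connected complex nilpotent Lie group has a nontrivial connected closed complex central subgroup (the exponential of the center of $\lie g$), so one may fix inside it a one-parameter central subgroup $Z\cong\mathbb C$, closed in $G$. Since $Z$ is a complex subgroup isomorphic to $(\mathbb C,+)$, the closed subgroup $Z\cap H$ is either $Z$, the trivial group, or a discrete lattice of rank one or two. If $Z\subset H$, then $G/H\cong(G/Z)/(H/Z)$ where $G/Z$ is simply connected complex nilpotent of dimension $\dim G-1$ (the bundle $G\to G/Z$ has contractible fibre, so $G/Z$ is simply connected) and $H/Z$ is a closed complex subgroup, and the inductive hypothesis applies.

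Now assume $Z\not\subset H$. As $Z$ is central, $HZ$ is a subgroup in which $H$ is normal, and $HZ\supset Z$. Suppose first that $HZ$ is closed (see the remark below if it is not). Then $G/H\to G/HZ$ is a holomorphic principal bundle with structure group $HZ/H\cong Z/(Z\cap H)$, which is $\mathbb C$, $\mathbb C^{*}$, or a one-dimensional compact complex torus, and the base $G/HZ=(G/Z)/(HZ/Z)$ is pseudoconvex by the inductive hypothesis. When the fibre is a compact torus, $G/H\to G/HZ$ is proper, and pulling back a continuous plurisubharmonic exhaustion of the base immediately exhibits $G/H$ as pseudoconvex.

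The remaining cases, fibre $\mathbb C$ or $\mathbb C^{*}$, form the heart of the proof: here one must \emph{lift} a plurisubharmonic exhaustion from the pseudoconvex base to the total space, and this is where nilpotency of $G$ is genuinely used. The point is that this homogeneous bundle is sufficiently ``flat'' — concretely, the associated line bundle (respectively affine bundle) is holomorphically trivial when restricted to any compact complex subvariety of $G/HZ$, since homogeneous line bundles on compact sub-nilmanifolds come from characters of discrete groups and hence have vanishing Chern class. This rules out Grauert-type obstructions (in contrast to, say, $\mathcal O(1)$ over $\mathbb P_1$, which cannot occur over a nilpotent base), and allows one to equip the bundle with a metric whose curvature is bounded below by the pullback of a form from the base; adding to $-\log$ of the corresponding fibre norm a sufficiently convex function of the base exhaustion then yields a plurisubharmonic exhaustion of $G/H$. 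I expect this lifting step to be the main obstacle, both to carry out cleanly and to make uniform across the two fibre types. Finally, if $HZ$ is not closed one replaces $Z$ by a better-chosen central one-parameter subgroup (one compatible with $H$, so that the image of $H$ in $G/Z$ stays closed) or passes to $L:=\overline{HZ}$, a closed subgroup of $G$ still containing $Z$; in this situation $H$ fails to be cocompact in $L$ in a way that produces nonconstant holomorphic functions on $G/H$ and lets one factor the relevant fibration through a closed complex subgroup already covered by the cases above. Assembling these cases completes the induction and shows $X=G/H$ is pseudoconvex.
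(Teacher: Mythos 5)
Your induction has a genuine gap exactly at the step you yourself flag as the heart of the matter: lifting a plurisubharmonic exhaustion from the pseudoconvex base $G/HZ$ to the total space of a bundle with fiber $\mbb{C}$ or $\mbb{C}^*$. Pseudoconvexity does \emph{not} in general pass from the base to the total space of a holomorphic bundle with Stein fiber -- the paper's Example~\ref{Ex:CousinPrincBundle} and Example~\ref{Ex:end} are precisely such failures (over a Stein base, even), and the paper's Lemma~\ref{Lem:Kiselman} only goes in the opposite direction (pushing exhaustions \emph{down}, via Kiselman's minimum principle). So the lifting statement is not a routine lemma but the whole content of the theorem, and your sketched justification does not carry it: ``the associated line bundle is trivial on compact subvarieties'' says nothing about the existence of a metric with curvature controlled by a form pulled back from a \emph{non-compact, non-Stein} base (in the induction the base is typically a non-compact Cousin group or a non-abelian nilmanifold quotient, not a compact torus), and for a $\mbb{C}^*$-fiber you need an exhaustion in \emph{both} fiber directions, which forces something like a pluriharmonic $\log$ of the fiber norm, i.e.\ essentially a flat structure -- nothing in your argument produces it. Moreover the fallback for $HZ$ not closed does not work as stated: $\ol{HZ}$ is a real, in general not complex, subgroup, so $G/H\to G/\ol{HZ}$ is not a holomorphic fibration; and a central one-parameter $Z$ with $HZ$ closed need not exist at all (for suitably generic Cousin quotients $\mbb{C}^n/\Gamma$ no complex line has closed sum with $\Gamma$), so the induction cannot even get started there.

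For comparison, the paper avoids any lifting of plurisubharmonicity. It first uses the normalizer fibration $G/H\to G/\mathscr{N}_G(H^0)$, whose base is $\mbb{C}^k$, and the Oka principle to reduce to discrete isotropy $N/\Gamma$; then it takes the \emph{real} Lie group $N_\Gamma$ generated by $\Gamma$ (which contains $\Gamma$ cocompactly), its complexification $\wh{N}_\Gamma$, splits off another $\mbb{C}^l$ factor by Oka, and invokes Loeb's theory of pseudoconvex pairs $(\wh{N}_\Gamma/M,\,N_\Gamma/M)$ -- applicable because nilpotent algebras have purely imaginary adjoint spectra -- to obtain an $N_\Gamma$-invariant plurisubharmonic exhaustion, which descends to an exhaustion of $\wh{N}_\Gamma/\Gamma$ by cocompactness of $\Gamma$ in $N_\Gamma$. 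If you want to salvage your inductive scheme, the missing lifting lemma would in effect have to reprove this invariant-exhaustion construction, so I recommend replacing the bundle-by-bundle induction with the real-form/Loeb argument.
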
 

\begin{proof}
The normalizer $\mathscr{N}_G(H^0)$ in $G$ of the connected component
$H^0$ of the identity of $H$ is connected, hence also simply connected, e.g.,
see Lemma~2 in~\cite{Mat}. So $G/\mathscr{N}_G(H^0)$ is biholomorphic to
$\mathbb C^k$ for some $k$. By the Oka Principle the bundle $G/H \to
G/\mathscr{N}_G(H^0)$ is holomorphically trivial. Therefore we need only
consider its fiber which has the form $N/\Gamma$, where $\Gamma := H/H^0$ is a
discrete subgroup of the simply connected group $N := \mathscr{N}_G(H^0)/H^0$.
In the case of a connected, simply connected nilpotent Lie group the exponential
map $\exp\colon\mathfrak n \to N$ is biholomorphic. The pre-image of $\Gamma$ in
$\mathfrak n$ spans a (real) Lie subalgebra $\mathfrak n_{\Gamma}$ whose
associated connected Lie group $N_{\Gamma}$ contains $\Gamma$ cocompactly. We
now set $\wh{\mathfrak n}_{\Gamma} := \mathfrak n_{\Gamma} + i \mathfrak
n_{\Gamma}$ and let $\wh{N}_{\Gamma}$ denote the corresponding connected complex
Lie group. Then $N/\wh{N}_{\Gamma}$ is biholomorphic to $\mathbb C^l$ for some
$l$ and applying the Oka Principle again we see that $N/\Gamma$ is biholomorphic
to the product $\wh{N}_{\Gamma}/\Gamma \times \mathbb C^l$. So finally we see
that it suffices to consider the nilmanifold $\wh{N}_{\Gamma}/\Gamma$ in order
to ensure the existence of a pseudoconvex exhaustion on $X$.

Setting $\mathfrak m := \mathfrak n_{\Gamma} \cap i \mathfrak n_{\Gamma}$ and
letting $M$ denote the corresponding closed complex subgroup of
$\wh{N}_{\Gamma}$,
we consider the pair $(\wh{N}_{\Gamma}/M,N_{\Gamma}/M)$. Note that this pair is
pseudoconvex in the sense of Loeb \cite{Lo}, since nilpotent Lie algebras always
have purely imaginary spectra. So there exists an $(N_{\Gamma}/M)$-right
invariant strictly plurisubharmonic exhaustion function on $\wh{N}_{\Gamma}/M$
that pulls back to an $N_{\Gamma}$-right invariant plurisubharmonic exhaustion
function on $\wh{N}_{\Gamma}$. Thus we see that the nilmanifold
$\wh{N}_{\Gamma}/\Gamma$ is pseudoconvex. It then follows that the original
nilmanifold $G/H$ is also pseudoconvex.
\end{proof}

\begin{defn} 
A {\it (principal) Cousin group tower of length one} is a Cousin group. A {\it
(principal) Cousin group tower of length} $n > 1$ is a (principal) holomorphic
bundle with fiber a Cousin group and base a (principal) Cousin group tower of
length $n-1$.  
\end{defn}

The fiber of the holomorphic reduction of a nilmanifold carries no non-constant
holomorphic functions, in fact it is a Cousin group tower. For a general
solvmanifold $X=G/H$ this is no longer true as we have seen in
Example~\ref{Ex:CoeureLoeb} where the fiber of the holomorphic reduction is
$\mbb{C}^*\times\mbb{C}^*$. However, in the rest of this section we will prove
that pseudoconvexity of the solvmanifold $X=G/H$ is sufficient (though not
necessary, see Example~\ref{Ex:end}) to ensure that the fiber of its holomorphic
reduction is a Cousin group tower and therefore has no non-constant holomorphic
functions.

We will need the following observation.

\begin{lem}\label{Lem:closedness}
Let $G$ be a connected complex Lie group and $\Gamma\subset G$ a discrete
subgroup. Furthermore let $H_1 \subset H_2 \subset G$ be two closed connected
complex subgroups which are both normalized by $\Gamma$. Suppose that $\Gamma\!
H_2$ is closed in $G$ and that $(\Gamma \cap H_2)H_1$ is closed in $H_2$. Then
$\Gamma\! H_1$ is closed in $G$.
\end{lem}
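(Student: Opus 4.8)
The plan is to pass to the quotient $G/H_1$ and show that the image of $\Gamma H_1$ there is closed, which is equivalent to closedness of $\Gamma H_1$ in $G$ since $H_1$ is a closed (normal-in-$\Gamma$) subgroup and $\Gamma H_1$ is $H_1$-saturated. So set $\ol{G}:=G/H_1$, let $q\colon G\to\ol{G}$ be the quotient map, and write $\ol{\Gamma}:=q(\Gamma)$ (the image of $\Gamma$, which is a subgroup since $\Gamma$ normalizes $H_1$) and $\ol{H}_2:=q(H_2)=H_2/H_1$ (a closed connected complex subgroup of $\ol{G}$, since $H_1$ is normal in $H_2$). The hypothesis that $\Gamma H_2$ is closed in $G$ translates, after applying $q$ and using that $\Gamma H_2$ is $H_1$-saturated, into: $\ol{\Gamma}\,\ol{H}_2$ is closed in $\ol{G}$. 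The hypothesis that $(\Gamma\cap H_2)H_1$ is closed in $H_2$ translates into: the image $q(\Gamma\cap H_2)$ is closed in $\ol{H}_2$. The goal is to conclude that $\ol{\Gamma}$ is closed in $\ol{G}$.

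First I would reduce further to the subgroup $\ol{H}_2$: since $\ol{\Gamma}\subset\ol{\Gamma}\,\ol{H}_2$ and the latter is closed in $\ol{G}$, it suffices to prove $\ol{\Gamma}$ is closed in $\ol{\Gamma}\,\ol{H}_2$. Now $\ol{H}_2$ is a closed normal subgroup of $\ol{\Gamma}\,\ol{H}_2$ (normal because $\Gamma$ normalizes $H_2$, hence $\ol{\Gamma}$ normalizes $\ol{H}_2$), so consider the quotient $\ol{\Gamma}\,\ol{H}_2 \to (\ol{\Gamma}\,\ol{H}_2)/\ol{H}_2 \cong \ol{\Gamma}/(\ol{\Gamma}\cap\ol{H}_2)$. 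The image of $\ol{\Gamma}$ under this map is the whole quotient, which is in particular closed. Therefore, by the standard fact that a subgroup is closed iff its image in a quotient by a closed normal subgroup is closed and its intersection with that subgroup is closed, it remains only to check that $\ol{\Gamma}\cap\ol{H}_2$ is closed in $\ol{H}_2$. Here is the one genuinely non-formal point: I must identify $\ol{\Gamma}\cap\ol{H}_2$ with $q(\Gamma\cap H_2)$ — that is, show $q^{-1}(\ol{H}_2)\cap\Gamma H_1 = (\Gamma\cap H_2)H_1$. The inclusion $\supset$ is clear; for $\subset$, take $\gamma h_1\in\Gamma H_1$ with $q(\gamma h_1)\in\ol{H}_2$, i.e. $\gamma h_1\in H_2$, and since $h_1\in H_1\subset H_2$ this forces $\gamma\in H_2$, giving $\gamma\in\Gamma\cap H_2$ as desired. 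Then $\ol{\Gamma}\cap\ol{H}_2 = q(\Gamma\cap H_2) = (\Gamma\cap H_2)H_1/H_1$, which is closed in $\ol{H}_2 = H_2/H_1$ precisely because $(\Gamma\cap H_2)H_1$ is closed in $H_2$ — our second hypothesis.

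The main obstacle — really the only thing requiring care — is the bookkeeping about when passing to quotients preserves and reflects closedness: one repeatedly uses that for a closed normal subgroup $K\trianglelefteq G$ with quotient map $\pi$, an $K$-saturated subset $A$ is closed in $G$ iff $\pi(A)$ is closed in $G/K$, and that a subgroup $A$ containing no constraint is closed iff $\pi(A)$ is closed in $G/K$ and $A\cap K$ is closed in $K$. Both are standard for topological groups with $K$ closed, but I would state the second one explicitly as the organizing principle. Everything else is the diagram-chasing identification of the various images and intersections carried out above; no analysis or Lie theory beyond "closed subgroup, continuous quotient map" is needed, which is why this lemma is isolated as a purely topological-group statement.
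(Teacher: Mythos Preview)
Your argument has a genuine gap: you pass to the quotient $\ol{G}=G/H_1$ and work with it as a group (speaking of subgroups $\ol{\Gamma}$, $\ol{H}_2$, the normal subgroup $\ol{H}_2\trianglelefteq\ol{\Gamma}\,\ol{H}_2$, and the quotient $(\ol{\Gamma}\,\ol{H}_2)/\ol{H}_2$), and you assert that $H_1$ is normal in $H_2$. None of this is given. The lemma only assumes that $\Gamma$ normalizes $H_1$ and $H_2$; it says nothing about $H_2$ (let alone $G$) normalizing $H_1$. So $G/H_1$ is merely a homogeneous space, $H_2/H_1$ need not be a group, and the quotient $(\ol{\Gamma}\,\ol{H}_2)/\ol{H}_2$ you form has no evident meaning. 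Your ``standard fact'' about closedness via a quotient by a closed normal subgroup therefore cannot be invoked as stated.

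The paper's proof sidesteps this entirely: it never quotients by $H_1$. Instead it uses the criterion that a subgroup $A$ of a (possibly disconnected) Lie group $L$ is closed if and only if $A\cap L^0$ is closed in $L^0$. Applying this with $L=\Gamma H_2$ (a closed subgroup of $G$ since $\Gamma$ normalizes $H_2$, hence a Lie group) and $L^0=H_2$ (since $H_2$ is connected and $\Gamma$ is countable), one is reduced to showing that $\Gamma H_1\cap H_2$ is closed in $H_2$. The computation $\Gamma H_1\cap H_2=(\Gamma\cap H_2)H_1$---which you carry out correctly---then finishes the proof via the second hypothesis. Note that this identity-component criterion is precisely the special case of your ``standard fact'' in which $K=L^0$, so the quotient $L/K$ is discrete and $\pi(A)$ is automatically closed; but phrased this way no normality of $H_1$ is needed anywhere. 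Your core computation and overall reduction to $\Gamma H_2$ are right; only the detour through $G/H_1$ is unjustified.
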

  
\begin{proof}
Any subgroup of a not necessarily connected Lie group $L$ is closed in $L$ if and
only if its intersection with $L^0$ is closed in $L^0$. Therefore $\Gamma\!H_1$
is closed in $\Gamma\!H_2$ (and hence closed in $G$) if and only if
$\Gamma\!H_1\cap H_2$ is closed in $H_2$. Since $\Gamma\!H_1\cap H_2=(\Gamma\cap
H_2)H_1$, the lemma is proved.
\end{proof}

We prove our result first for a special class of solvmanifolds.

\begin{prop}\label{solvpscx}   
Let $X=G/\Gamma$ be a solvmanifold with $\Gamma\subset G$ discrete such that
$\mathscr{O}(X) \simeq \mathbb C$. If $X$ is pseudoconvex, then there is a
connected normal Abelian subgroup $C \subset G$ such that $\Gamma C$ is closed
in $G$ and the fibration $G/\Gamma \to G/\Gamma C$ has the Cousin group $C
\Gamma/ \Gamma$ as fiber.
\end{prop}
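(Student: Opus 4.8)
The plan is to locate the group $C$ inside the commutator subgroup $G'$ and to use pseudoconvexity at two stages: once to force a certain fiber to be compact, and once more, via Lemma~\ref{Lem:Kiselman}, to transport pseudoconvexity down a tower of $(\mbb{C}^*)^k$-bundles. First I would recall that since $G$ is connected solvable and simply connected, the commutator subgroup $N:=G'$ is a closed connected simply connected nilpotent normal subgroup, and the quotient $G/N$ is isomorphic to some $\mbb{C}^m$. Pushing the discrete group $\Gamma$ into $G/N$ gives an Abelian subgroup; let $A$ denote the connected complex subgroup of $G$ with $A/N = \langle\text{image of }\Gamma\rangle_{\mbb{C}}\cdot N/N$ reduced so that $\Gamma N$ is cocompact in $AN=A$ modulo $N$ in the appropriate sense. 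The key point is the interplay between the holomorphic reduction of $X$ and these subgroups: since $\mathscr{O}(X)\simeq\mbb{C}$, the holomorphic reduction is trivial, and I would use this to control how $\Gamma$ sits with respect to $N$.

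The main structural step is to consider the nilmanifold fiber. Look at the fibration $G/\Gamma\to G/\Gamma N$ (once one checks $\Gamma N$ is closed, which follows because $G/N$ is $\mbb{C}^m$ and the image of $\Gamma$ there, being finitely generated Abelian in a vector group, has closed ``$\mbb{R}$-span plus lattice'' behavior after passing to the right subgroup). The fiber is the nilmanifold $N\cap\Gamma N/\dots \simeq N/(N\cap\Gamma)$ up to finite data — more precisely the connected component of the fiber is $N/\Gamma_N$ where $\Gamma_N$ is a discrete subgroup of the nilpotent group $N$. Since the total space $X$ is pseudoconvex and the fiber is a closed complex submanifold, the fiber is pseudoconvex; but for a nilmanifold $N/\Gamma_N$ with $\mathscr{O}=\mbb{C}$ one knows (this is the nilpotent case, cf.\ the machinery behind Theorem~\ref{Thm:nilpotent}) that the real span $\mathfrak{n}_{\Gamma_N}$ of $\log\Gamma_N$ must already be a \emph{complex} subalgebra, i.e.\ $\mathfrak{n}_{\Gamma_N}=\mathfrak{n}_{\Gamma_N}\cap i\mathfrak{n}_{\Gamma_N}$; otherwise the Cousin-tower structure forces a $\mbb{C}$-factor or a non-compact piece producing nonconstant functions, contradicting $\mathscr{O}(X)\simeq\mbb{C}$. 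Set $C_0:=\widehat{N}_{\Gamma_N}$, the connected complex subgroup of $N$ with Lie algebra $\mathfrak{n}_{\Gamma_N}$; then $C_0\Gamma_N/\Gamma_N$ is compact, hence $C_0\Gamma_N$ is closed in $N$, and $N/C_0\Gamma_N\simeq\mbb{C}^l$.

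Next I would enlarge to obtain $C$ normal in $G$. The group $C_0$ is characterized Lie-algebraically (it is the complexification of the $\mbb{R}$-span of the logarithms of $\Gamma\cap N$), and since $\Gamma$ normalizes $\Gamma\cap N$ — indeed $G$ normalizes $N$ and one can arrange $\Gamma$ to normalize the relevant pieces after the reductions in the earlier sections — the subalgebra $\mathfrak{c}:=\Lie(C_0)$ is $\Ad(\Gamma)$-stable, and one shows it is in fact an ideal in $\lie{g}$ using that $[\lie g,\lie g]\subset\mathfrak n$ together with nilpotency; so $C:=C_0$ is a connected normal \emph{Abelian} (after one further reduction: replace $\mathfrak c$ by a suitable abelian ideal, or note that the Cousin group structure makes the relevant $C$ abelian) complex subgroup of $G$ with $\Gamma C$ closed. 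Here I would invoke Lemma~\ref{Lem:closedness} with $H_1=C$, $H_2=N$ (or $H_2 = CN$): $\Gamma N$ is closed in $G$, $(\Gamma\cap N)C$ is closed in $N$ by the compactness just established, hence $\Gamma C$ is closed in $G$. Finally, the fiber of $G/\Gamma\to G/\Gamma C$ is $C\Gamma/\Gamma\simeq C/(C\cap\Gamma)$, and by construction $C/(C\cap\Gamma)$ is a compact complex torus quotient with dense ``leaves'' — i.e.\ a Cousin group — because $\mathfrak c\cap i\mathfrak c=\mathfrak c$ was arranged and $C\cap\Gamma$ is a lattice of the right rank in $C$, the latter rank condition being forced by $\mathscr{O}(X)\simeq\mbb{C}$ (a lattice of full rank would make $C/(C\cap\Gamma)$ compact, which is fine, but a lattice of rank equal to $\dim_{\mbb C}C$ would give $\mbb C^*$-factors contradicting $\mathscr O(X)=\mbb C$; a lattice of too-small rank would be excluded by the definition of $C$).

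The hard part will be the normality and abelianness of $C$: showing that the Lie-algebraically natural candidate $\mathfrak c = \langle\log(\Gamma\cap N)\rangle_{\mbb R}+i\langle\log(\Gamma\cap N)\rangle_{\mbb R}$ is genuinely an ideal of $\lie g$ — not merely of $\mathfrak n$ — and that one may take it abelian, will require carefully combining the $\Gamma$-normalization arrangements from Proposition~\ref{Prop:normalizing} with the nilpotent exponential-coordinates argument from the proof of Theorem~\ref{Thm:nilpotent}, and possibly iterating (peeling off one Cousin-group layer at a time and applying Lemma~\ref{Lem:Kiselman} to keep the successive bases pseudoconvex). The verification that $\Gamma C$ is closed and that the resulting fiber is exactly a Cousin group, given the rank bookkeeping, should then be routine.
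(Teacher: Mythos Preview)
Your proposal has a genuine gap at exactly the point you flag as ``the hard part'': your candidate $C_0=\widehat{N}_{\Gamma\cap N}$, the complex hull inside $N=G'$ of $\Gamma\cap N$, is in general neither abelian nor normal in $G$. From $\Ad(\Gamma)$-stability of $\lie c_0$ you cannot conclude that $\lie c_0$ is an ideal of $\lie g$; solvable groups have no Zariski-density mechanism turning $\Gamma$-invariance into $G$-invariance, and the remark ``using $[\lie g,\lie g]\subset\lie n$ together with nilpotency'' does not produce $[\lie g,\lie c_0]\subset\lie c_0$. Likewise, the complex hull of a discrete subgroup of a nilpotent group is itself nilpotent, not abelian, and ``the Cousin group structure makes the relevant $C$ abelian'' is circular. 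Your use of pseudoconvexity is also too soft: beyond restricting the exhaustion to the nilmanifold fiber, you never invoke the foliation group $\widehat H$ from Theorem~\ref{Thm:Main}, and without it there is no mechanism guaranteeing that an iterative construction makes progress.

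The paper's argument is structurally different. It does not take the complex hull of $\Gamma\cap G'$; instead it forms $S_1:=\widehat{G'}_{\Gamma'}$ (hull of the \emph{derived} group $\Gamma'$) and sets $L$ equal to the identity component of the \emph{centralizer} of $S_1$ in $G$. The Barth--Otte Abspaltungssatz then gives for free that $L\Gamma$ is closed and that $L$ is normal in $G$, disposing of the normality problem. Pseudoconvexity enters through $\widehat H$: since $\mathscr O(X)\simeq\mbb C$ forces $\widehat H\lhd G$, a short nilpotent-algebra argument shows $\widehat{\lie h}\cap\lie l\neq0$, so after shrinking $L$ one obtains a nontrivial normal $L$ with $L\Gamma$ closed and $\mathscr O(L\Gamma/\Gamma)\simeq\mbb C$. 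One then \emph{iterates} the same centralizer construction on $L/(L\cap\Gamma)$, checking at each step that the new subgroup is still $\Gamma$-normalized and closed (this is where Lemma~\ref{Lem:closedness} is used). The chain $L\supset L_2\supset\cdots$ terminates in an abelian $C$; abelianness comes out at the end of the iteration, not from any single complex-hull step.
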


\begin{proof}
Since $G'$ is a connected nilpotent subgroup of $G$, the exponential map
$\exp\colon\lie{g}'\to G'$ is biholomorphic and we can construct the smallest
connected complex subgroup of $G'$ containing $\Gamma'$ as in the proof of
Theorem~\ref{Thm:nilpotent}, i.e., by $S_1:= \wh{G}'_{\Gamma'}$. Let $L\subset
G$ be the identity component of the centralizer of $S_1$ in $G$. As remarked in
the proof of~\cite[Abspaltungssatz~3.2]{BaOt69}, the group $L \Gamma$ is a
closed subgroup of $G$. Therefore, it follows from~\cite[Satz~1.1]{BaOt69} that
$L$ is normal in $G$. One gets the fibration 
\[
X=G/\Gamma \to G/L\Gamma.
\]
By hypothesis, $X$ is a pseudoconvex manifold and therefore there is a maximal
connected complex subgroup $\wh{H} \subset G$ such that $\wh{H}$ is normalized
by $\Gamma$ and $\wh{H} \Gamma/\Gamma$ is relatively compact in $X$. Since
$\mathscr{O}(X) \simeq \mathbb C$, we have $\wh{H} \lhd G$. Consider the Lie
algebra $[\g,\wh{\lie{h}}] \subset \g'$. If  $[\g,\wh{\lie{h}}] = 0$, then
$\wh{\lie{h}} \subset \mathfrak z(\g)$ and hence $\wh{\lie{h}} \subset \lie{l}$.
If $[\g,\wh{\lie{h}}] \neq 0$, then $0 \neq \wh{\lie{h}} \cap \g'\lhd \g $ and in
particular $\wh{\lie{h}}\cap\g' \lhd \g'$. The Lie algebra $\wh{\lie{h}}\cap\g'$
being normal in $\g'$, it follows that $(\wh{\lie{h}}\cap\g') \cap \mathfrak
z(\g') \neq 0$. Since $\mathfrak z(\g') \subset\lie{l}$, it follows that in both
cases $\wh{\lie{h}}_{\lie{l}} := \wh{\lie{h}}\cap\lie{l} \neq 0$ is a non-trivial
ideal in $\g$.

Let $\tilde L \subset L$ be the smallest connected subgroup containing
$\wh{H}_L:= \wh{H}\cap L$ such that $\tilde L$ is normalized by $\Gamma$,
$\tilde L \Gamma$ is closed in $G$ and $\mathscr{O}(\tilde L \Gamma/\Gamma)
\simeq \mathbb C$. Then $\tilde L$ again is a normal subgroup of $G$. In order
to simplify the notation, we omit the tilde and now have the fibration    
\[   
        X=G/\Gamma \to  G/L \Gamma     
\]  
with $\mathscr{O}(L \Gamma\!/\Gamma) \simeq \mathbb C$.   Note that the fiber is
isomorphic to $L/(L\cap \Gamma)$ where $L$ is a simply connected solvable Lie
group. Now define $\La:=\Gamma \cap L$. Since $L\cap \wh{H}$ is non-trivial, the
discrete group $\La$ is also non-trivial and a normal subgroup of $\Gamma$. We
will now apply the same reduction steps as before to the homogeneous manifold
$L/\Lambda$ which has the same properties as $X=G/\Gamma$. In order to be able
to carry over the results of this iteration to $L\Gamma/\Gamma$ we must check
carefully that all the groups constructed inside $L$ are normalized by $\Gamma$.

If $\La'$ is trivial, then $\La$ and hence $L$ are abelian and the lemma is
proved. Therefore suppose that $S_2:=\wh{L}'_{\Lambda'}\subset L'$ is
non-trivial and let $L_2$ be the connected component of the centralizer of $S_2$
in $L$. Since $\La \lhd \G$, we have that $\G$ normalizes $S_2$ and hence   
$L_2$. As a consequence we get that $L_2 \lhd G$. Furthermore
$\wh{\lie{h}}_{\lie{l}} \cap \lie{l}_2\neq 0$, for the same reasons as above. By
Lemma~\ref{Lem:closedness} we get that $\G\! L_2$ is a closed subgroup of $G$.

Now we can iterate the construction from the proof
of~\cite[Abspaltungssatz~3.2]{BaOt69} to produce a chain of subgroups $L\supset
L_2\supset\dotsb$ in $G$ which are normalized by $\Gamma$ and such that $\Gamma
L_j$ is closed in $G$. Since this process must terminate after finitely many
steps, we finally get the Abelian group $C$ such that
$\mathscr{O}(C\Gamma/\Gamma)\simeq\mbb{C}$ as claimed.
\end{proof}  

After these preparations we are able to prove the main result of this section.

\begin{thm}\label{Thm:solv}
Suppose $X=G/H$ is a pseudoconvex solvmanifold with holomorphic reduction $G/H
\to G/J$. Then the base $G/J$ is Stein and the fiber $J/H$ is a Cousin group
tower. In particular, $\mathscr{O}(J/H) \simeq \mathbb C$.
\end{thm}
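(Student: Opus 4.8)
The plan is an induction on $\dim X$, with Proposition~\ref{solvpscx} doing the main work. Since $G$ is solvable, the base $G/J$ of the holomorphic reduction is automatically Stein — this is the classical result of Huckleberry–Oeljeklaus recalled in the introduction (see~\cite{HuOe86}) — so the whole content is that the fibre $J/H$ is a Cousin group tower. As $J/H$ is a closed complex submanifold of the pseudoconvex $X$ it is pseudoconvex, and it is a solvmanifold of a solvable group of dimension at most $\dim X$; so it suffices to prove, by induction on dimension, the sharper statement: \emph{every pseudoconvex solvmanifold $Y=L/K$ with $\mathscr{O}(Y)\simeq\mathbb{C}$ is a Cousin group tower}, and then to show $\mathscr{O}(J/H)\simeq\mathbb{C}$.

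I would fold the identity $\mathscr{O}(J/H)\simeq\mathbb{C}$ into the induction. If $J=G$ there is nothing to prove here; otherwise $\dim J/H<\dim X$, and applying the theorem in lower dimension to the pseudoconvex solvmanifold $J/H$ gives its own holomorphic reduction $J/H\to J/I$ with Stein base $J/I$ and Cousin‑group‑tower fibre $I/H$. Since $I/H$ is a Cousin group tower, $G/H\to G/I$ is a finite composition of holomorphic Cousin‑group bundles, so iterated use of Lemma~\ref{Lem:Kiselman}(1) shows $G/I$ is again pseudoconvex, and a direct computation with holomorphic functions shows that $G/J$ is also the holomorphic reduction of $G/I$. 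If $I\neq H$ then $\dim G/I<\dim X$, so the induction hypothesis applied to $G/I$ forces the fibre $J/I$ of its holomorphic reduction to be a Cousin group tower, hence $\mathscr{O}(J/I)\simeq\mathbb{C}$; together with $J/I$ Stein this makes $J/I$ a point, i.e.\ $I=J$, so $J/H=I/H$ is a Cousin group tower as desired. The remaining possibility $I=H$ means $J/H$ itself is Stein, and this is the point I expect to be the main obstacle: one must use pseudoconvexity of the total space $G/H$ (as opposed to $G/H$ being only locally Stein over the Stein base $G/J$, which is not enough by the Serre‑type phenomenon in Example~\ref{Ex:CoeureLoeb}) to rule out a non‑trivial Stein fibre — e.g.\ by showing that a $\mathbb{C}$‑ or $\mathbb{C}^{*}$‑factor in $J/H$ would contradict $\mathscr{O}(G/H)=\pi^{*}\mathscr{O}(G/J)$, or that such a fibre produces a closed complex submanifold of $G/H$ incompatible with Theorem~\ref{Thm:Hirsch}.

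It remains to treat the case $\mathscr{O}(X)\simeq\mathbb{C}$, and here the strategy is to reduce to discrete isotropy and then iterate Proposition~\ref{solvpscx}. First, for $G$ simply connected solvable the normalizer fibration $X=G/H\to G/\mathscr{N}_G(H^0)$ has Stein base (indeed the base is biholomorphic to a complex vector space); since $\mathscr{O}(G/\mathscr{N}_G(H^0))\hookrightarrow\mathscr{O}(X)\simeq\mathbb{C}$, this base is a point, so $H^0\lhd G$, $G/H^0$ is again simply connected solvable, and $\Gamma:=H/H^0$ is a discrete subgroup of it. Replacing $G$ by $G/H^0$ and $H$ by $\Gamma$, we may assume $H=\Gamma$ is discrete. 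Now Proposition~\ref{solvpscx} yields a connected normal Abelian subgroup $C\lhd G$ with $C\Gamma$ closed and $G/\Gamma\to G/C\Gamma$ a holomorphic bundle whose fibre $C\Gamma/\Gamma\cong C/(C\cap\Gamma)$ is a Cousin group. By Lemma~\ref{Lem:Kiselman}(1) the base $G/C\Gamma$ is pseudoconvex, it satisfies $\mathscr{O}(G/C\Gamma)\simeq\mathbb{C}$, and it has strictly smaller dimension than $X$ because the Cousin fibre is positive‑dimensional; so by the induction hypothesis $G/C\Gamma$ is a Cousin group tower. Hence $X=G/\Gamma\to G/C\Gamma$ exhibits $X$ as a holomorphic bundle with Cousin‑group fibre over a Cousin group tower, i.e.\ $X$ is itself a Cousin group tower, completing the induction. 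The assertion $\mathscr{O}(J/H)\simeq\mathbb{C}$ is then immediate from $J/H$ being a Cousin group tower.
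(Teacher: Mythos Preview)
Your inductive scheme is sound and parallels the paper's argument closely: both reduce to the case $\mathscr{O}=\mbb{C}$, pass to discrete isotropy via the normalizer fibration (the paper invokes Lie's Flag Theorem to get the Stein base rather than claiming it is literally $\mbb{C}^k$, but this is immaterial since you only need it to be a point), and then iterate Proposition~\ref{solvpscx} together with Lemma~\ref{Lem:Kiselman}. The paper organises the reduction of $\mathscr{O}(J/H)$ to $\mbb{C}$ via an explicit descending chain $J=J_0\supset J_1\supset\dotsb\supset J_k$ of successive holomorphic reductions rather than an induction on dimension, but the two arguments are formally equivalent.

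The one genuine gap is exactly the case you flag: $I=H$, i.e.\ $J/H$ itself Stein. Here you speculate but do not settle it, and the resolution is precisely the second mechanism you mention. If $J/H$ is Stein then $G/H\to G/J$ has Stein base and holomorphically separable fibre, so by Remark~\ref{Serre} (the corollary of Theorem~\ref{Thm:Hirsch} you cite) every pseudoconvex domain spread over $G/H$ is Stein; in particular $G/H$ is Stein, forcing $J=H$, which contradicts the standing assumption $H\subsetneq J$. This is exactly how the paper closes its argument: having shown $J_k/H$ is a Cousin group tower and hence $G/J_k$ pseudoconvex, it observes that $J_{k-2}/J_k$ is a closed (hence pseudoconvex) submanifold fibred over the holomorphically separable $J_{k-2}/J_{k-1}$ with holomorphically separable fibre $J_{k-1}/J_k$, so Remark~\ref{Serre} makes $J_{k-2}/J_k$ Stein, contradicting that $J_{k-2}/J_{k-1}$ was already the holomorphic reduction of $J_{k-2}/H$. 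Once you insert this one line, your proof is complete.
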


\begin{proof}
The fact that $G/J$ is Stein is proven in~\cite{HuOe86}. We have $\wh{\lie{h}}
\subset\lie{j}$ since the base is holomorphically separable by definition. We
claim that $\mathscr{O}(J/H)\simeq\mathbb C$. If not, then there would exist a
holomorphic reduction $J/H \to J/J_1$ of $J/H$ with $\dim J > \dim J_1$. We would
continue taking holomorphic reductions in this way: given $J_{n-1}$ we define the
closed complex subgroup $J_n$ by means of the holomorphic reduction $J_{n-1}/H\to
J_{n-1}/J_n$, where $\dim J_n < \dim J_{n-1}$. The process stops after a finite
number of steps and we obtain a subgroup $J_k$ with
$\mathscr{O}(J_k/H)\simeq\mathbb C$, where we assume that $k$ is the smallest
positive integer such that $J_k/H$ has this property. Note that $\dim J_k/H >
0$, since recursively we have $\wh{\lie{h}}\subset \lie{j}_n$ for $1 \le n \le
k$ and $\dim \wh{\lie{h}} > \dim \lie{h}$.

We claim that $J_k/H$ is a Cousin group tower. Since
$J_k/\mathscr{N}_{J_k}(H^0)$ is Stein by Lie's Flag Theorem, it follows that
$\mathscr{N}_{J_k}(H^0) = J_k$ and thus the isotropy is discrete. In this case
we write $\Gamma$ instead of $H$. We now apply Proposition~\ref{solvpscx} to
$J_k/\Gamma$ and get the requisite subgroup $C\subset J_k$ such that $J_k/\Gamma
\to J_k/C\Gamma$ has a Cousin group as fiber. By Lemma~\ref{Lem:Kiselman} the
base $J_k/C\Gamma$ is pseudoconvex and $\mathscr{O}(J_k/C\Gamma) \simeq \mathbb
C$. By recursion, $J_k/\Gamma$ is a Cousin group tower and so by repeated use of
Lemma~\ref{Lem:Kiselman} we conclude that $G/J_k$ is pseudoconvex.

Finally, we shall show that $k\ge 1$ yields a contradiction. Consider the
subgroups $H\subset J_k \subset J_{k-1}\subset J_{k-2}\subset G$, where we set
$J_{-1}:=G$ and $J_{0}:=J$. Then $J_{k-2}/J_{k}$, as a closed submanifold of
$G/J_{k}$, is pseudoconvex. Moreover, we have the fibration $J_{k-2}/J_{k} \to
J_{k-2}/J_{k-1}$. Both $J_{k-2}/J_{k-1}$ and $J_{k-1}/J_k$ are holomorphically
separable implying $J_{k-2}/J_k$ is Stein by Remark \ref{Serre}. This implies
$J_{k-2}/J_k$ is the holomorphic reduction of $J_{k-2}/H$. But, by construction
$J_{k-2}/J_{k-1}$ is the holomorphic reduction of $J_{k-2}/H$. Since $\dim J_k <
\dim J_{k-1}$,  we obtain the desired contradiction. \emph{A posteriori} we see
that $J=J_k$, i.e., that $\mathscr{O}(J/H)\simeq\mathbb C$. The argument given
in the previous paragraph then shows that $J/H$ is indeed a Cousin group tower.
\end{proof}

We finish with an example which shows that the converse of
Theorem~\ref{Thm:solv} does not hold.

\begin{ex}\label{Ex:end}
Let $n \geq 3$ and $A \in {\rm SL}(n, \mathbb Z)$ such that $A$ is
diagonalizable over $\mathbb C$ and admits $s>0$ positive real eigenvalues
$\alpha_1,...,\alpha_s$ and $t>0$ pairs of complex conjugate eigenvalues
$\beta_1,\bar{\beta_1},...,\beta_t,\bar{\beta_t}$. Note that $n=s+2t$. Assume
furthermore that the characteristic polynomial of $A$ is irreducible over
$\mbb Q$. In particular $ \alpha_j \neq 1$ for all $j$. The existence of such an
$A$ is easily seen by using elementary number theory, see e.g.~\cite{OT}. The
fact that $A$ is diagonalizable implies that there is a \emph{real} logarithm $D
\in \mathfrak{sl}(n,\mathbb R)$ of $A$. This means that the one-parameter group 
$\{\exp(x D)\! \mid \! x \in \mbb R\}$ lies in ${\rm SL}(n, \mathbb R)$, which
justifies the following construction. For $K=\mbb Z, \mbb R, \mbb C$ define a
solvable group structure $G_K:=K \ltimes K^n$ on the cartesian product by
\begin{equation*}
(x_1,v_1)\cdot (x_2,v_2):=\bigl(x_1+x_2,\exp(x_1D)v_2+v_1\bigr).
\end{equation*}
Note that $G_{\mbb Z}$ is discrete cocompact in $G_{\mbb R}$ and that $G_{\mbb
R}$ is a real form of the simply-connected solvable complex Lie group $G_{\mbb
C}$. Now let $H \subset \{0\} \ltimes \mbb C^n \subset G_{\mbb C}$ be the 
$t$-dimensional complex subgroup generated by $A$-eigenvectors corresponding to
the eigenvalues $\beta_1,\beta_2,\dotsc,\beta_t$. It was shown in~\cite{OT} that
$C:=\{0\} \ltimes\mbb C^n/(\{0\} \ltimes \mbb Z^n) H$ is a Cousin group. Note
that $H$ is a normal subgroup of $G_{\mbb C}$. Now define $X:= G_{\mbb
C}/G_{\mbb Z} H$. We get a (non-principal) fibration $X:= G_{\mbb C}/G_{\mbb Z}
H \to G_{\mbb C}/G_{\mbb Z} (\{0\} \ltimes \mbb C^n)$ with  the Cousin group $C$
as fiber and $\mbb C^*$ as base.    

Suppose that $X$ is pseudoconvex and choose a
plurisubharmonic exhaustion function. Since $G_{\mbb R} H/G_{\mbb Z} H$ is
compact, an integration argument as in \cite{Lo} gives a plurisubharmonic
function on $G_{\mbb C}$ which is $G_{\mbb R} H$-invariant and is an exhaustion
function on $G_{\mbb C}/G_{\mbb R} H$. The maximal connected complex subgroup of
$G_{\mbb R} H$ being $H \oplus \ol{H}$, we get the pseudoconvex couple $(G_{\mbb
C}/H \oplus \ol{H},G_{\mbb R}/H \oplus \ol{H})$ in the sense of Loeb. But this
couple has eigenvalues $\ln(\alpha_j)\in \mbb R^*$ of the adjoint representation
of its real form on itself. This is a contradiction. Therefore $X$ is a Cousin
fiber bundle over a Stein manifold but is \emph{not} pseudoconvex.
\end{ex}


\begin{thebibliography}{GMO11}

\bibitem[BO69]{BaOt69} 
W. Barth and M. Otte \emph{ \"Uber fast-uniforme Untergruppen komplexer
Liegruppen und aufl\"osbare komplexe Mannigfaltigkeiten.}, Comment. Math. Helv.
\textbf{44} (1969), 269--281.
    
\bibitem[BO73]{BO}
W.~Barth and M.~Otte, \emph{Invariante holomorphe Funktionen auf reduktiven
Liegruppen}, Math. Ann. \textbf{201} (1973), 97--112.

\bibitem[Ber87]{Bert}
F.~Berteloot, \emph{Existence d'une structure k\"ahl\'erienne sur
  les vari\'et\'es homog\`enes semi-simples}, C. R. Acad. Sci. Paris S\'er. I
  Math. \textbf{305} (1987), no.~19, 809--812.
  
\bibitem[BO88]{BerO}
F.~Berteloot and K.~Oeljeklaus, \emph{Invariant plurisubharmonic functions and
  hypersurfaces on semisimple complex {L}ie groups}, Math. Ann. \textbf{281}
  (1988), no.~3, 513--530.

\bibitem[Bo91]{Bo}
A. Borel, \emph{Linear algebraic groups}, Second edition, Graduate Texts in
  Mathematics, 126, Springer-Verlag, New York, 1991, xii+288.

\bibitem[BR62]{BR} 
A. Borel and R. Remmert, \emph{\"{U}ber kompakte homogene K\"{a}hlersche 
  Mannigfaltigkeiten}, Math. Ann. \textbf{145} 1961/62, 429--439.    

\bibitem[CL85]{CL}
G. C{\oe}ur{\'e} and J.J. Loeb, \emph{A counterexample to the
Serre problem with  a bounded domain of $\mbb{C}^2$ as fiber}, Ann. of Math. (2)
\textbf{122}  (1985), no. 2, 329--334.

\bibitem[Fuj63]{Fuj63} 
R. Fujita, \emph{Domaines sans point critique int\'{e}rieur sur l'espace
projectif complexe}, J. Math. Soc. Japan \textbf{15} (1963), 443--473.

\bibitem[GMO11]{GMO}
B. Gilligan, C. Miebach and K. Oeljeklaus, \emph{Homogeneous K\"ahler and
Hamiltonian manifolds}, Math. Ann. \textbf{349} (2011), no.~4, 889--901.

\bibitem[HI97]{HeIan}
P. Heinzner and A. Iannuzzi, \emph{Integration of local actions on
  holomorphic fiber spaces}, Nagoya Math. J. \textbf{146} (1997), 31--53.

\bibitem[Hir74]{Hir74}
A. Hirschowitz, \emph{Pseudoconvexit\'e au-dessus d'espaces plus ou
  moins homog\`enes}, Invent. Math. \textbf{26} (1974), 303--322.

\bibitem[Hir75]{Hir75}
\bysame, \emph{Le probl\`eme de {L}\'evi pour les espaces homog\`enes}, Bull.
  Soc. Math. France \textbf{103} (1975), no.~2, 191--201.

\bibitem[HM78]{HM78}
K.H. Hofmann and A. Mukherjea, \emph{On the density of the image of the
  exponential function}, Math. Ann. \textbf{234} (1978), no.~3, 263--273.

\bibitem[Hol78]{Hol78} 
H. Holmann, \emph{On the stability of holomorphic foliations with all leaves
compact}, Vari\'{e}t\'{e}s analytiques compactes (Colloq., Nice, 1977), pp.
217--248, Lecture Notes in Math. 683, Springer, Berlin, 1978. 

\bibitem[HO81]{HuOe}
A.T. Huckleberry and E.Oeljeklaus, \emph{Homogeneous spaces from a complex
  analytic viewpoint}, Manifolds and Lie groups (Notre Dame, Ind., 1980),
  Progr. Math., vol.~14, Birkh\"auser Boston, Mass., 1981, pp.~159--186.  
  
\bibitem[HO86]{HuOe86} 
A.T. Huckleberry and E.Oeljeklaus, \emph{On holomorphically separable complex 
solv-manifolds}, Ann. Inst. Fourier
  (Grenoble) \textbf{36} (1986), no.~3, 57--65.

\bibitem[Huc10]{Hu10}
A. Huckleberry, \emph{Remarks on homogeneous complex manifolds satisfying
  {L}evi conditions}, Boll. Unione Mat. Ital. (9) \textbf{3} (2010), no.~1,
  1--23.

\bibitem[Kis78]{Kis}
C.O. Kiselman, \emph{The partial Legendre transformation for plurisubharmonic
functions}, Invent. Math. \textbf{49} (1978), no. 2, 137--148.

\bibitem[KLY11]{KLY}
K.T. Kim, N. Levenberg, and H. Yamaguchi, \emph{Robin functions
  for complex manifolds and applications}, Mem. Amer. Math. Soc. \textbf{209}
  (2011), no.~984, viii+111.

\bibitem[Loe85]{Lo}
J.J. Loeb, \emph{Action d'une forme r\'eelle d'un groupe de {L}ie
  complexe sur les fonctions plurisousharmoniques}, Ann. Inst. Fourier
  (Grenoble) \textbf{35} (1985), no.~4, 59--97.

\bibitem[Mar75]{Mar}
G. Margulis, \emph{Discrete groups of motions of manifolds of nonpositive
  curvature}, Proceedings of the International Congress of Mathematicians
  (Vancouver, B.C., 1974), Vol. 2, pp. 21--34. Canad. Math. Congress, Montreal,
  Que., 1975.
  
\bibitem[Mat57]{Mat57} 
Y. Matsushima, \emph{Sur les espaces homog\`{e}nes k\"{a}hl\'{e}riens d'un groupe 
  de Lie r\'{e}ductif}, Nagoya Math. J. \textbf{11} (1957), 53--60.    

\bibitem[Mat60]{Mat}
Y. Matsushima,  \emph{Espaces homog{\`e}nes de Stein des groupes de Lie
  complexes}, Nagoya Math. J. \textbf{16} (1960), 205--218.

\bibitem[Mill76]{Mill}
J. Millson, \emph{On the first Betti number of a constant negatively curved
  manifold}, Ann. of Math. (2) \textbf{104} (1976), no.~2, 235--247.

\bibitem[Nis62]{Nis62}  
T. Nishino, \emph{Sur les espaces analytiques holomorphiquement complets},
  J. Math. Kyoto Univ. \textbf{1} (1961/1962), 247--254.
  
\bibitem[On60]{On} 
A.L. Onishchik, \emph{Complex hulls of compact homogeneous spaces}, Dokl.
  Akad. Nauk SSSR \textbf{130} (1960), pp. 726--729; English trans.: Sov. Math.
  \textbf{1} (1960), pp. 88--93.

\bibitem[OT05]{OT} 
K. Oeljeklaus and M. Toma \emph{Non-K\"ahler compact complex manifolds associated 
  to number fields},   Ann. Inst. Fourier (Grenoble) 55 (2005), no. 1, 161--171.

\bibitem[Tak64]{Tak64}
A. Takeuchi, \emph{Domaines pseudoconvexes infinis et la m{\'e}trique
  riemannienne dans un espace projectif}, J. Math. Soc. Japan \textbf{16}
  (1964), 159--181.

\bibitem[Ue80]{Ue80} 
T. Ueda, \emph{Pseudoconvex domains over Grassmann manifolds}, J. Math. Kyoto Univ. 
  \textbf{20} (1980), 391--394.  
   
\bibitem[Zaf08]{Zaf}
D. Zaffran, \emph{Holomorphic functions on bundles over annuli}, Math. Ann.
  \textbf{341} (2008), no.~4, 717--733.
\end{thebibliography}
\end{document}